\newtheorem{thm}{Theorem}[section]
\newtheorem{cor}[thm]{Corollary}
\newtheorem{lem}[thm]{Lemma}
\newtheorem{prop}[thm]{Proposition}
\newtheorem{pro}[thm]{Property}
\theoremstyle{definition}
\newtheorem{defn}[thm]{Definition}
\newtheorem{rem}[thm]{Remark}
\newcommand{\Z}{\mathbb Z}
\newcommand{\C}{\mathbb C}
\newif\ifpdf \pdftrue
\begin{document}

\title{A Lefschetz-Riemann-Roch theorem for singular schemes}

\author{Runqiao Fu and Shun Tang}

\date{}

\maketitle

\vspace{-10mm}

\hspace{5cm}\hrulefill\hspace{5.5cm} \vspace{5mm}

\textbf{Abstract.} In this paper, we prove a Lefschetz-Riemann-Roch theorem for singular projective schemes which admit diagonalisable group scheme actions, this result generalizes P. Baum, W. Fulton and G. Quart's Lefschetz-Riemann-Roch theorem for singular varieties (cf. \cite{BFQ}) to general scheme case.

\textbf{2020 Mathematics Subject Classification:} 14C35, 14C40, 19L47


\section{Introduction}
Let $k$ be an algebraically closed field and let $n$ be an integer prime to the characteristic of $k$. A projective $k$-scheme $X$ which admits an automorphism $h_X$ of order $n$ is called an equivariant variety. An equivariant coherent sheaf on $X$ is a coherent sheaf $\mathcal{F}$ on $X$ together with a homomorphism
$$\varphi: h_X^*\mathcal{F}\rightarrow\mathcal{F}.$$ It is clear that this homomorphism induces a family of endomorphisms $H^i(\varphi)$ on cohomology spaces $H^i(X,\mathcal{F})$.

A classical Lefschetz fixed point formula is to give an expression of the alternating sum of the traces of $H^i(\varphi)$, as a sum of the contributions from the components of the fixed point subvariety $X^{h_X}$. On the other hand, roughly speaking, a Lefschetz-Riemann-Roch theorem is a commutative diagram in equivariant $K$-theory which can be regarded as a Grothendieck type generalization of Lefschetz fixed point formula. When $X$ is nonsingular, P. Donovan has proved such a theorem in \cite{Do} by
using the results and some of the methods of the paper of A. Borel and J. P. Serre on the Grothendieck-Riemann-Roch theorem (cf.\cite{BS}). In \cite{BFQ}, P. Baum, W. Fulton and G. Quart generalized Donovan's theorem to singular varieties, the key step of their proof heavily relies on a classic method called the deformation to the normal cone.

Thirteen years later, the results above have been generalized to a new situation by R. W. Thomason in \cite{Th}. In this new situation, the base scheme ${\rm Spec}(k)$ has been replaced by any noetherian, separated and connected scheme $S$ and one generally considers group scheme action on a separated scheme which is of finite type over $S$. To obtain such a Lefschetz fixed point formula, Thomason used the Quillen's localization sequence for higher equivariant $K$-theory, this method helped him to remove the condition of the projectivity for the equivariant schemes and furthermore he can only assume that a morphism between two equivariant schemes is just proper, not necessarily projective. This is very different from a traditional treatment on Grothendieck-Riemann-Roch. If one removes such restrictions, namely if one can accept all necessary conditions about projectivity, he can recover Thomason's formula by using traditional method i.e. the deformation to the normal cone. Actually, our result has some advantages as compared with Thomason's original formula, we shall elaborate these advantages in the last of this paper. The structure of our proof is more or less the same as in \cite{BFQ}.

We now precisely describe our result. Let $N$ be the abelian group $\Z/{n\Z}$ for some $n\in \Z_{+}$, and let $D$ be a noetherian integral regular ring. Define $\mu_n={\rm Spec}(D[N])$ which is a diagonalisable group scheme over $D$. Let $X$ be a scheme over $D$, to give a $\mu_n$-action on $X$ is to give a morphism $m_X: \mu_n\times X\rightarrow X$ which satisfies the usual associativity property. A scheme $X$ which admits a $\mu_n$-action will be called a $\mu_n$-equivariant scheme. Let $Pr_X$ be the canonical projection from $\mu_n\times X$ to $X$. A $\mu_n$-equivariant coherent sheaf on $X$ is a coherent sheaf
$\mathcal{F}$ on $X$ together with an isomorphism $$\varphi:
m_X^*\mathcal{F}\rightarrow Pr_X^*\mathcal{F}$$ which also
satisfies some associativity property (cf. \cite{Ko}). One can image that a $\mu_n$-morphism between two $\mu_n$-equivariant schemes should be understood as a $D$-morphism which is compatible with $\mu_n$-equivariant structures, the definition of $\mu_n$-morphism between two $\mu_n$-equivariant coherent sheaves is similar. Now we say a $\mu_n$-equivariant scheme $X$ is $\mu_n$-projective if there exists a $\mu_n$-equivariant closed immersion $i: X\rightarrow \mathbb{P}_D^r$ which maps $X$ into some projective space endowed with a $\mu_n$-action. One should be
careful about that this definition is more restrictive than the definition given in \cite{Ko}.

Suppose now that we are given a $\mu_n$-action on the free sheaf $\mathcal{F}:=\mathcal{O}_D^{r+1}$ with $r\geq 0$. Using the functorial properties of the ${\rm Proj}$ symbol, we obtain a $\mu_n$-action on the projective space $\mathbb{P}_D^r={\rm Proj}({\rm Sym}(\mathcal{F}))$. A $\mu_n$-action on $\mathbb{P}_D^r$ arising from this way will be called global. The importance of global action is that the twisting sheaf $\mathcal{O}(1)$ of a projective space with global $\mu_n$-action is canonically a $\mu_n$-equivariant invertible sheaf (cf. \cite{Ko}).

Let $X$ be a $\mu_n$-equivariant scheme, by a $\mu_n$-projective envelope of $X$ we understand a $\mu_n$-equivariant closed immersion $j: X\hookrightarrow Z$ which maps $X$ into a $\mu_n$-projective scheme $Z$. When $Z$ is regular, we will call it a $\mu_n$-projective regular envelope. Assume that $X$ and $Y$ are two $\mu_n$-equivariant schemes, a $\mu_n$-morphism $f:
X\rightarrow Y$ is called $\mu_n$-projective if $f$ can be
decomposed as a $\mu_n$-equivariant closed immersion $i:
X\hookrightarrow \mathbb{P}_Y^r$ followed by the canonical
$\mu_n$-projection $p: \mathbb{P}_Y^r\rightarrow Y$.

For a $\mu_n$-equivariant scheme $X$, we denote by $\mathcal{C}_X$ the category of all $\mu_n$-equivariant coherent sheaves on $X$ with $\mu_n$-morphisms, we then denote by $K_0'(X,\mu_n)$ the Grothendieck group of $\mathcal{C}_X$. If $f: X\rightarrow Y$ is a proper $\mu_n$-morphism, we have a push-forward map $$f_*:
K_0'(X,\mu_n)\rightarrow K_0'(Y,\mu_n)$$ which can be defined in a similar way to the non-equivariant case. Define
$R(\mu_n):=K_0(D)[N]$ which is isomorphic to $K_0(D)[T]/{(1-T^n)}$ and which will be fixed as a base ring.

Now assume that $X$ and $Y$ are two integral $\mu_n$-equivariant schemes which both admit a $\mu_n$-projective regular envelope. Moreover, let $f$ be a $\mu_n$-morphism from $X$ to $Y$ which is automatically projective, and let $\mathcal{R}$ be a flat
$R(\mu_n)$-algebra such that $1-T^k$ with $k=1,\ldots,n-1$ are all invertible in $\mathcal{R}$. Our main result reads that there exist homomorphisms $L.$ which are independent of the choice of the envelopes and satisfy the following commutative diagram:
\begin{displaymath}\xymatrix{
	K_0'(X,\mu_n) \ar[r]^-{L.} \ar[d]_{f_*} & K_0'(X_{\mu_n},\mu_n)\otimes_{R(\mu_n)}\mathcal{R} \ar[d]^{{f_{\mu_n}}_*} \\
	K_0'(Y,\mu_n) \ar[r]^-{L.} & K_0'(Y_{\mu_n},\mu_n)\otimes_{R(\mu_n)}\mathcal{R}.}
\end{displaymath}

Here $X_{\mu_n}$ and $Y_{\mu_n}$ stand for the fixed point schemes which are closed subschemes of $X$ and $Y$, and $f_{\mu_n}: X_{\mu_n}\rightarrow Y_{\mu_n}$ is the $\mu_n$-morphism induced by $f$. The authors want to indicate that a fix point formula of Lefschetz type like in \cite[Th\'{e}or\`{e}me 3.5]{Th} can be deduced from this diagram (cf. Corollary~\ref{422}).

\textbf{Acknowledgements.} This work is partially supported by NSFC (no. 12171325) and by National Key R$\&$D Program of China No. 2023YFA1009702.

\section{Preliminaries on equivariant relative $K$-theory}
\subsection{Locally free resolutions of equivariant coherent sheaves}
Let $X$ be a (nice) scheme. The property that any coherent sheaf on $X$ admits a resolution by coherent locally free sheaves is very important for constructing relative $K$-theory. In non-equivariant case, the conditions for this property being true are very simple, for example when $X$ is regular or quasi-projective. The equivariant case looks more complicated because one has to make sure that a $\mu_n$-equivariant coherent sheaf is a quotient of $\mu_n$-equivariant coherent locally free sheaf. The following lemma is the beginning of our argument.

\begin{lem}\label{211}
	Let $X$ be a $\mu_n$-projective scheme, then $X$ admits a
	$\mu_n$-equivariant very ample invertible sheaf relative to its structure morphism.
\end{lem}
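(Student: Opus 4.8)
The plan is to reduce everything to the canonical equivariant structure on the twisting sheaf of a projective space carrying a global action. By hypothesis there is a $\mu_n$-equivariant closed immersion $i\colon X\hookrightarrow\mathbb{P}_D^r$ exhibiting $X$ as $\mu_n$-projective, where $\mathbb{P}_D^r$ is endowed with some $\mu_n$-action $m$. The obvious candidate for the required sheaf is $i^*\mathcal{O}(1)$, so the real content is to equip $\mathcal{O}(1)$ on $\mathbb{P}_D^r$ with a $\mu_n$-equivariant structure; equivalently, to show that the given action on $\mathbb{P}_D^r$ is global in the sense explained in the introduction, after which the canonical equivariance of $\mathcal{O}(1)$ (cf. \cite{Ko}) is available.

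First I would translate the action into group-scheme data. Since the automorphism functor of $\mathbb{P}_D^r$ relative to $D$ is represented by $\PGL_{r+1,D}$, the action $m$ is the same thing as a homomorphism of $D$-group schemes $\rho\colon\mu_n\to\PGL_{r+1,D}$. The action is global precisely when $\rho$ lifts, along the central quotient $\pi\colon\GL_{r+1,D}\to\PGL_{r+1,D}$, to a homomorphism $\tilde\rho\colon\mu_n\to\GL_{r+1,D}$: indeed such a $\tilde\rho$ is exactly an $N$-graded structure (a $\mu_n$-action) on the free sheaf $\mathcal{O}_D^{r+1}$ whose induced action on ${\rm Proj}({\rm Sym})$ recovers $m$. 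Thus the whole problem is concentrated in the existence of this lift.

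The key step, and the one I expect to be the main obstacle, is therefore to produce $\tilde\rho$, and this is exactly where the hypotheses that $\mu_n$ is diagonalisable and cyclic are used. Pulling back the central extension $1\to\G_m\to\GL_{r+1,D}\to\PGL_{r+1,D}\to1$ along $\rho$ yields a central extension $1\to\G_m\to E\to\mu_n\to1$, and a lift of $\rho$ is the same datum as a homomorphic splitting of this extension. I would first check that $E$ is commutative: its commutator pairing is an alternating bi-multiplicative map $\mu_n\times\mu_n\to\G_m$, and because the Cartier dual $N=\Z/n\Z$ is cyclic, every such pairing is trivial, so $E$ is abelian. Being a commutative extension of groups of multiplicative type, $E$ is itself of multiplicative type and hence diagonalisable over the connected ring $D$; under the anti-equivalence between diagonalisable $D$-group schemes and finitely generated abelian groups it corresponds to an extension $0\to\Z/n\Z\to M\to\Z\to0$, which splits because $\Z$ is free. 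Dualising the splitting gives $\tilde\rho$, so the action on $\mathbb{P}_D^r$ is global. The two delicate points here are precisely the vanishing of the commutator pairing and the identification of $E$ as a diagonalisable group, which is what lets the relevant $\mathrm{Ext}^1$ be computed combinatorially.

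It then remains to assemble the conclusion. With the global action in hand, the twisting sheaf $\mathcal{O}(1)$ on $\mathbb{P}_D^r$ is canonically $\mu_n$-equivariant by the fact recalled in the introduction (cf. \cite{Ko}). Since $i$ is a $\mu_n$-morphism, the pullback $L:=i^*\mathcal{O}(1)$ inherits a $\mu_n$-equivariant structure by functoriality of pullback for equivariant coherent sheaves. Finally $L$ is very ample on $X$ relative to its structure morphism: $\mathcal{O}(1)$ is very ample on $\mathbb{P}_D^r$ relative to ${\rm Spec}(D)$, and very ampleness relative to the base is preserved under restriction along the closed immersion $i$. This produces the desired $\mu_n$-equivariant very ample invertible sheaf and completes the argument.
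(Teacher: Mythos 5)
Your overall strategy---pull back $\mathcal{O}(1)$ along the given equivariant embedding after linearizing it, via the observation that globality of the action on $\mathbb{P}^r_D$ is the whole issue---is a sensible reduction, and the final paragraph (equivariance and relative very ampleness pass to $i^*\mathcal{O}(1)$) is unproblematic. But the central claim, that every $\mu_n$-action on $\mathbb{P}^r_D$ is global, is false, and the error sits exactly at one of the two points you yourself flag as delicate: a commutative extension $1\to\G_m\to E\to\mu_n\to 1$ is indeed of multiplicative type, but ``of multiplicative type'' only means diagonalisable after a faithfully flat base change; over a merely \emph{connected} ring $D$ the character sheaf of $E$ may have nontrivial monodromy, so $E$ need not be diagonalisable and the purely combinatorial computation of the extension group via $0\to\Z/n\Z\to M\to\Z\to 0$ does not apply. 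The splittings of $E\to\mu_n$ form a torsor under the Cartier dual $\underline{\Z/n\Z}$, so the true obstruction lives in $H^1_{\mathrm{et}}(\operatorname{Spec}D,\Z/n\Z)$, which need not vanish. Concretely, take $D=\R$, $n=2$, and the involution of $\mathbb{P}^1_\R$ given by the image in $\PGL_2(\R)$ of $A=\left(\begin{smallmatrix}0&1\\-1&0\end{smallmatrix}\right)$: any lift of the generator is $\lambda A$ with $(\lambda A)^2=-\lambda^2 I\neq I$ for all $\lambda\in\R^*$, so this $\mu_2$-action is not global, the extension $E$ (a form of $\G_m\times\Z/2$ split only over $\C$) is commutative and of multiplicative type but not diagonalisable, and $\mathcal{O}(1)$ admits no $\mu_2$-equivariant structure---whereas your argument would produce one. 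The same example works over $\Q$ or $\Z[1/2]$, so it is not excluded by the paper's hypotheses on $D$.

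The lemma itself survives because one should not insist on $\mathcal{O}(1)$: the obstruction class above is killed by $n$ (it is an extension of the $n$-torsion group scheme $\mu_n$ by $\G_m$), and replacing $\mathcal{O}(1)$ by $\mathcal{O}(n)$ pushes the extension out along the $n$-th power map of $\G_m$, which splits it; $\mathcal{O}(n)$ is still relatively very ample, and its pullback to $X$ does the job. In the example, $\mathrm{Sym}^2(A)$ squares to $\mathrm{Sym}^2(-I)=I$, so $\mathcal{O}(2)$ is $\mu_2$-linearizable. This ``pass to a tensor power'' step is the actual content of \cite[Lemma 2.5]{KR}, which the paper's proof simply invokes, adding only that the affine-base hypothesis there can be relaxed by working with relative ampleness over a general noetherian base. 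Note finally that the paper's Lemma~\ref{214} treats the existence of an equivariant very ample sheaf as \emph{equivalent} to embeddability into a projective space with a \emph{global} action; the care taken there to distinguish arbitrary from global actions is itself a warning sign that the globality you assert cannot hold in general.
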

\begin{proof}
	This statement is a special case of \cite[Lemma 2.5]{KR}. Although the proof of that lemma heavily depends on the fact that the base scheme is affine, we have the same conclusion for relative projective space over a general noetherian scheme by using the concept of relative ample.
\end{proof}
\begin{prop}\label{212}
	Let $X$ be a $\mu_n$-projective regular scheme, then any
	$\mu_n$-equivariant coherent sheaf on $X$ admits a finite
	resolution by $\mu_n$-equivariant coherent locally free sheaves.
\end{prop}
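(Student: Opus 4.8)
The plan is to deduce the statement from a single generation principle---that every $\mu_n$-equivariant coherent sheaf is an equivariant quotient of a $\mu_n$-equivariant locally free sheaf---and then to force finiteness of the resulting resolution using the regularity of $X$. So I would first establish the following covering statement. Let $\pi:X\to\mathrm{Spec}(D)$ be the structure morphism, which is $\mu_n$-equivariant for the trivial action on the base, and let $\mathcal{L}$ be a $\mu_n$-equivariant very ample invertible sheaf furnished by Lemma~\ref{211}. Given a $\mu_n$-equivariant coherent sheaf $\mathcal{F}$, the twists $\mathcal{F}\otimes\mathcal{L}^{\otimes m}$ are again $\mu_n$-equivariant, and by the relative form of Serre's global generation theorem there is an $m\gg 0$ for which the counit $\pi^*\pi_*(\mathcal{F}\otimes\mathcal{L}^{\otimes m})\to\mathcal{F}\otimes\mathcal{L}^{\otimes m}$ is surjective. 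Since $\pi$ and the adjunction are compatible with the $\mu_n$-structures, this is a surjection of $\mu_n$-equivariant sheaves, and $M:=\pi_*(\mathcal{F}\otimes\mathcal{L}^{\otimes m})$ is a $\mu_n$-equivariant coherent sheaf on $\mathrm{Spec}(D)$. Because $\mu_n$ is diagonalisable with character group $N$, such a sheaf is precisely a finitely generated $N$-graded $D$-module $M=\bigoplus_{\chi\in N}M_\chi$; choosing finitely many homogeneous generators yields a $\mu_n$-equivariant surjection $\bigoplus_j\mathcal{O}_{\chi_j}\twoheadrightarrow M$, where $\mathcal{O}_\chi$ denotes the structure sheaf of the base endowed with the equivariant structure coming from the character $\chi\in N$. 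Pulling back along $\pi$, composing with the counit, and untwisting by $\mathcal{L}^{\otimes -m}$ produces a $\mu_n$-equivariant surjection $\mathcal{E}\twoheadrightarrow\mathcal{F}$ with $\mathcal{E}:=\bigoplus_j\pi^*\mathcal{O}_{\chi_j}\otimes\mathcal{L}^{\otimes -m}$ a $\mu_n$-equivariant locally free sheaf.

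Next I would iterate and truncate. The kernel of a $\mu_n$-morphism is again $\mu_n$-equivariant coherent, since $m_X^*$ and $Pr_X^*$ are flat, hence exact, so they preserve kernels and transport the equivariant structure; thus repeatedly applying the covering step produces an a priori infinite resolution $\cdots\to\mathcal{E}_1\to\mathcal{E}_0\to\mathcal{F}\to 0$ by $\mu_n$-equivariant locally free sheaves. To make it finite I would invoke regularity: writing $d=\dim X$, at every point $x$ the ring $\mathcal{O}_{X,x}$ is regular of dimension $\le d$, so by the syzygy theorem the $d$-th syzygy $\mathcal{K}_{d-1}:=\ker(\mathcal{E}_{d-1}\to\mathcal{E}_{d-2})$ has locally free stalks. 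Local freeness is a condition on the underlying sheaf alone, so the equivariant structure already carried by $\mathcal{K}_{d-1}$ exhibits it as a $\mu_n$-equivariant locally free sheaf, and
\[
0\to\mathcal{K}_{d-1}\to\mathcal{E}_{d-1}\to\cdots\to\mathcal{E}_0\to\mathcal{F}\to 0
\]
is the required finite resolution.

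The whole weight of the argument sits in the covering step, and specifically in manufacturing an \emph{equivariant} locally free cover rather than merely a locally free one. Two points need care: that the counit $\pi^*\pi_*(-)\to(-)$ and the pushforward $\pi_*$ genuinely respect the $\mu_n$-structures, so that the constructed cover is equivariant; and that a $\mu_n$-equivariant coherent sheaf on the base really splits into weight spaces, so that homogeneous generators exist. This last point is exactly where diagonalisability of $\mu_n$ is indispensable, since for a non-diagonalisable group one could not peel off free equivariant summands in this way. By contrast the truncation step is formal once the finite Krull dimension of $X$ is available, because local freeness of a sufficiently high syzygy is detected non-equivariantly.
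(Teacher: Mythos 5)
Your proposal is correct and follows essentially the same route as the paper's proof: produce an equivariant locally free cover via a twist by the equivariant very ample sheaf of Lemma~\ref{211} together with an equivariant surjection from a free comodule (your weight-space decomposition of $\pi_*(\mathcal{F}\otimes\mathcal{L}^{\otimes m})$ is exactly how the paper's ``finitely generated and free $\mu_n$-comodule $M$'' is obtained), note the kernel is equivariant by flatness of $\mu_n$, iterate, and truncate by regularity. Your spelled-out syzygy truncation is just the ``dimension shifting argument'' the paper invokes, so no further comment is needed.
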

\begin{proof}
	By Lemma~\ref{211}, there exists a $\mu_n$-equivariant very ample invertible sheaf relative to the structure morphism $f: X\rightarrow {\rm Spec}(D)$, we denote it by $\mathcal{O}(1)$. Then for any $\mu_n$-equivariant coherent sheaf $\mathcal{F}$ on $X$, we have a surjective morphism of coherent sheaves $$f^*f_*(\mathcal{F}\otimes\mathcal{O}(l))\otimes\mathcal{O}(-l)\rightarrow\mathcal{F}\rightarrow
	0\quad\quad(l\gg 0)$$ 
	according to \cite[Thm. 8.8, p. 252]{Ha}. Such a surjective map by construction is equivariant. Let $M$ be a finitely generated and free $\mu_n$-comodule such that there exists a surjective map $M\rightarrow
	f_*(\mathcal{F}\otimes\mathcal{O}(l))$, then we get a surjective map
	$$(f^*\widetilde{M}\otimes\mathcal{O}(-l))\rightarrow\mathcal{F}\rightarrow
	0$$ of equivariant sheaves with
	$(f^*\widetilde{M}\otimes\mathcal{O}(-l))$ coherent locally free. Note that $\mu_n$ is flat so that $m_X^*$ and $Pr_X^*$ are both exact functors, then one may use snake lemma to conclude that the kernel of this surjection is also $\mu_n$-equivariant. Repeating the process above with the kernel of this surjection, we obtain an equivariant coherent locally free resolution of $\mathcal{F}$. Finally, since $X$ is regular, we may conclude that this resolution is finite by a dimension shifting argument.
\end{proof}

Although we don't need the following result, it is theoretically interesting. Its proof will be clear after Property~\ref{2210}.

\begin{cor}\label{213}
	Let notations and assumptions be as above, and denote by
	$K_0(X,\mu_n)$ the Grothendieck group of $\mu_n$-equivariant	coherent locally free sheaves on $X$. Then the nature map $K_0(X,\mu_n)\rightarrow K_0'(X,\mu_n)$ is an isomorphism.
\end{cor}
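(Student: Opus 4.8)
The plan is to exhibit an explicit inverse to the natural map $\iota\colon K_0(X,\mu_n)\rightarrow K_0'(X,\mu_n)$, following the classical identification of the $K$-group of vector bundles with the $G$-group of coherent sheaves over a regular base, carried out $\mu_n$-equivariantly. Surjectivity of $\iota$ is immediate: by Proposition~\ref{212} every $\mu_n$-equivariant coherent sheaf $\mathcal{F}$ admits a finite resolution $0\rightarrow\mathcal{E}_m\rightarrow\cdots\rightarrow\mathcal{E}_0\rightarrow\mathcal{F}\rightarrow 0$ by $\mu_n$-equivariant coherent locally free sheaves, whence $[\mathcal{F}]=\sum_{i=0}^m(-1)^i[\mathcal{E}_i]$ in $K_0'(X,\mu_n)$ lies in the image of $\iota$. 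The whole content of the corollary is therefore the construction of a well-defined \emph{Euler characteristic} homomorphism $\chi\colon K_0'(X,\mu_n)\rightarrow K_0(X,\mu_n)$ sending $[\mathcal{F}]$ to $\sum_i(-1)^i[\mathcal{E}_i]$, together with the verification that $\chi$ and $\iota$ are mutually inverse.

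The key structural fact I would isolate first is that the $\mu_n$-equivariant coherent locally free sheaves form a \emph{resolving} subcategory of the $\mu_n$-equivariant coherent sheaves: the class is closed under extensions, every object has a finite locally free resolution by Proposition~\ref{212}, and the kernel of any surjection $\mathcal{E}\rightarrow\mathcal{E}'$ between $\mu_n$-equivariant locally free sheaves is again $\mu_n$-equivariant locally free. This last point is the crucial one. The kernel is locally free because the surjection splits locally, its cokernel being locally free, and it carries a canonical $\mu_n$-equivariant structure by the same snake-lemma argument used in the proof of Proposition~\ref{212}, relying on the exactness of $m_X^*$ and $Pr_X^*$ coming from the flatness of $\mu_n$ over $D$.

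Granting this, the well-definedness of $\chi$ proceeds in two standard steps. First, I would show that any bounded acyclic complex of $\mu_n$-equivariant locally free sheaves has vanishing alternating class in $K_0(X,\mu_n)$: breaking the complex into short exact sequences of syzygies, the resolving property keeps every syzygy $\mu_n$-equivariant locally free, and telescoping the additivity relations in $K_0(X,\mu_n)$ gives the vanishing. Second, the $\mu_n$-equivariant horseshoe lemma produces, from a short exact sequence $0\rightarrow\mathcal{F}'\rightarrow\mathcal{F}\rightarrow\mathcal{F}''\rightarrow 0$ and chosen resolutions of $\mathcal{F}'$ and $\mathcal{F}''$, a resolution of $\mathcal{F}$ that is degreewise the direct sum, yielding additivity $\chi(\mathcal{F})=\chi(\mathcal{F}')+\chi(\mathcal{F}'')$. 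Independence of the chosen resolution then follows from the vanishing of acyclic classes: given two resolutions of $\mathcal{F}$, one builds a third $\mu_n$-equivariant locally free resolution surjecting onto each and lifting the identity on $\mathcal{F}$, and the kernels of these surjections are bounded acyclic complexes of $\mu_n$-equivariant locally free sheaves, so their alternating classes vanish and the two Euler characteristics agree.

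I expect the \textbf{main obstacle} to be exactly this independence of the chosen resolution, that is, the packaging of the comparison between two finite $\mu_n$-equivariant locally free resolutions; this is precisely the point that the resolving-subcategory formalism, equivalently the equivariant form of Quillen's resolution theorem, is designed to handle, and is presumably the role played by Property~\ref{2210} in the authors' development. Once $\chi$ is known to be a well-defined homomorphism, the two compositions are trivial to identify: $\chi\circ\iota$ is the identity on $K_0(X,\mu_n)$ because a locally free sheaf is its own length-zero resolution, while $\iota\circ\chi$ is the identity on $K_0'(X,\mu_n)$ by the surjectivity computation above. Hence $\iota$ is an isomorphism.
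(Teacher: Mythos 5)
Your argument is correct and is essentially the paper's own route: the paper proves Corollary~\ref{213} simply by pointing to Property~\ref{2210}, whose inverse map is exactly your Euler-characteristic $\chi$ built from the finite equivariant locally free resolutions supplied by Proposition~\ref{212}, with well-definedness resting on the comparison of resolutions (Remark~\ref{219}(i), your ``domination by a third'') and on the fact that kernels of surjections of equivariant locally free sheaves remain equivariant locally free. One small caution on the additivity step: the horseshoe lemma does not apply verbatim to two \emph{arbitrarily chosen} locally free resolutions of $\mathcal{F}'$ and $\mathcal{F}''$, because locally free sheaves are not projective objects of $\mathcal{C}_X$ and the required lifting of $\mathcal{E}''_0\rightarrow\mathcal{F}''$ along $\mathcal{F}\rightarrow\mathcal{F}''$ need not exist; the standard repair is to surject an equivariant locally free sheaf onto the fibre product $\mathcal{E}''_0\times_{\mathcal{F}''}\mathcal{F}$ and take kernels, producing a degreewise split short exact sequence of resolutions, which suffices since you have already established independence of the chosen resolution.
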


Given two $\mu_n$-projective schemes $Y_1$ and $Y_2$, we are very interested in whether their fibre product $Y_1\times Y_2$ is also $\mu_n$-projective. There is no reason to believe that this is true for general group scheme action. But we have the following lemma which is enough to meet our requirement.

\begin{lem}\label{214}
	Let $X$ be a $\mu_n$-projective scheme, then the following statements are equivalent:
	
	(a) $X$ admits a $\mu_n$-equivariant closed immersion into some projective space endowed with a global $\mu_n$-action;
	
	(b) There is a $\mu_n$-equivariant very ample invertible sheaf on	$X$.
\end{lem}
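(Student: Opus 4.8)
The easier implication (a)$\Rightarrow$(b) is essentially a consequence of the functorial properties recalled in the introduction. Suppose we are given a $\mu_n$-equivariant closed immersion $i:X\hookrightarrow\mathbb{P}_D^r$ into a projective space carrying a global $\mu_n$-action. Since the twisting sheaf $\mathcal{O}(1)$ on $\mathbb{P}_D^r$ is canonically $\mu_n$-equivariant, its pullback $i^*\mathcal{O}(1)$ is a $\mu_n$-equivariant invertible sheaf on $X$, and it is very ample relative to the structure morphism because the pullback of a relatively very ample sheaf along a closed immersion remains relatively very ample. This furnishes the sheaf required in (b).

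The substantial direction is (b)$\Rightarrow$(a). Write $f:X\to\mathrm{Spec}(D)$ for the structure morphism and let $\mathcal{L}$ be a $\mu_n$-equivariant very ample invertible sheaf. The plan is to realize the standard embedding associated to $\mathcal{L}$ in a $\mu_n$-equivariant manner and then to compare it with a trivial projective space carrying a global action. First I would observe that $f_*\mathcal{L}$ is a coherent $\mu_n$-equivariant $\mathcal{O}_D$-module, and that the relative very ampleness provides a $\mu_n$-equivariant closed immersion $j:X\hookrightarrow\mathbb{P}(f_*\mathcal{L})$ with $\mathcal{L}\cong j^*\mathcal{O}(1)$; the equivariance here follows directly from the equivariant structure on $\mathcal{L}$ via the functoriality of $\mathrm{Proj}$. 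The remaining task is to dominate the projective bundle $\mathbb{P}(f_*\mathcal{L})$ by a trivial projective space equipped with a global action.

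Here I would exploit the diagonalisability of $\mu_n$. Since $\mu_n=\mathrm{Spec}(D[N])$ is diagonalisable, a $\mu_n$-equivariant $\mathcal{O}_D$-module is the same datum as an $N$-graded $D$-module, so that $f_*\mathcal{L}$ decomposes into its isotypic components $\bigoplus_{\chi\in N}(f_*\mathcal{L})_\chi$. As $D$ is noetherian, each coherent piece $(f_*\mathcal{L})_\chi$ admits a surjection from a finite free module $\mathcal{O}_D^{a_\chi}$; endowing each such summand with the $\mu_n$-action through the character $\chi$ and taking the direct sum produces a free sheaf $\mathcal{O}_D^{r+1}$ (with $r+1=\sum_{\chi}a_\chi$) carrying a diagonal, hence global, $\mu_n$-action, together with a $\mu_n$-equivariant surjection $\mathcal{O}_D^{r+1}\twoheadrightarrow f_*\mathcal{L}$. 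Such a surjection induces a $\mu_n$-equivariant closed immersion $\mathbb{P}(f_*\mathcal{L})\hookrightarrow\mathbb{P}_D^r$ compatible with the global action on $\mathbb{P}_D^r=\mathrm{Proj}(\mathrm{Sym}(\mathcal{O}_D^{r+1}))$. Composing with $j$ yields the $\mu_n$-equivariant closed immersion $X\hookrightarrow\mathbb{P}_D^r$ required by (a).

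The main obstacle I anticipate is checking carefully that the action built on the free sheaf is genuinely global in the restrictive sense adopted in this paper, and that the immersion $j$ and the surjection-induced immersion $\mathbb{P}(f_*\mathcal{L})\hookrightarrow\mathbb{P}_D^r$ respect the $\mu_n$-equivariant structures simultaneously. The translation between $\mu_n$-comodules and $N$-graded modules is the crucial mechanism that forces the diagonal action to appear, after which the remaining verification is a routine application of the functoriality of $\mathrm{Proj}$ to an equivariant surjection of sheaves.
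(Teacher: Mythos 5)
Your argument is correct and is, in substance, the proof of the result the paper invokes: the paper's own ``proof'' is a one-line citation of \cite[Lemma 2.4]{KR}, and your reduction --- equivariant very ampleness yields an equivariant closed immersion $X\hookrightarrow\mathbb{P}(f_*\mathcal{L})$ via the counit $f^*f_*\mathcal{L}\to\mathcal{L}$, and the isotypic decomposition of the comodule $f_*\mathcal{L}$ over the diagonalisable group $\mu_n$ produces an equivariant surjection from a free sheaf with a character-diagonal (hence, by the paper's definition, global) action, giving a further equivariant closed immersion into $\mathbb{P}_D^r$ --- is exactly the mechanism that citation relies on. There are no gaps: coherence of $f_*\mathcal{L}$ and closedness of the induced immersion follow from $X$ being projective over the noetherian ring $D$, and any $\mu_n$-action on $\mathcal{O}_D^{r+1}$ qualifies as global in the sense adopted here, so the verification you flag as the main obstacle is immediate.
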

\begin{proof}
	This statement is a special case of \cite[Lemma 2.4]{KR}.
\end{proof}
\begin{prop}\label{215}
	Let $\mathbb{P}_D^r$ and $\mathbb{P}_D^s$ be two projective spaces which are both endowed with global $\mu_n$-action, then their fibre product $\mathbb{P}_D^r\times \mathbb{P}_D^s$ is a $\mu_n$-projective scheme.
\end{prop}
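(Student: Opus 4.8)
The plan is to realise $\mathbb{P}_D^r\times\mathbb{P}_D^s$ as a $\mu_n$-equivariant closed subscheme of a larger projective space carrying a global $\mu_n$-action, by means of the Segre embedding. By definition the global action on $\mathbb{P}_D^r=\mathrm{Proj}(\mathrm{Sym}(\mathcal{F}))$ is induced by a $\mu_n$-action on the free sheaf $\mathcal{F}=\mathcal{O}_D^{r+1}$; since $\mu_n$ is diagonalisable, such an action is the same datum as a decomposition $\mathcal{F}=\bigoplus_{\chi\in N}\mathcal{F}_\chi$ into weight subsheaves. Likewise the global action on $\mathbb{P}_D^s=\mathrm{Proj}(\mathrm{Sym}(\mathcal{G}))$ arises from a weight decomposition of $\mathcal{G}=\mathcal{O}_D^{s+1}$. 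The fibre product carries the diagonal $\mu_n$-action, for which both projections are $\mu_n$-morphisms.

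First I would form the tensor product $\mathcal{H}:=\mathcal{F}\otimes_{\mathcal{O}_D}\mathcal{G}$, a free sheaf of rank $(r+1)(s+1)$, and endow it with the tensor-product $\mu_n$-action, that is, the weight decomposition $\mathcal{H}_\chi=\bigoplus_{\alpha+\beta=\chi}\mathcal{F}_\alpha\otimes\mathcal{G}_\beta$. Applying the functor $\mathrm{Proj}(\mathrm{Sym}(-))$ then yields a projective space $\mathbb{P}(\mathcal{H})\cong\mathbb{P}_D^{(r+1)(s+1)-1}$ equipped with a global $\mu_n$-action. Next I would invoke the Segre embedding $\sigma\colon \mathbb{P}(\mathcal{F})\times\mathbb{P}(\mathcal{G})\hookrightarrow\mathbb{P}(\mathcal{H})$, which in Grothendieck's convention sends a pair of rank-one quotients $(\mathcal{F}\twoheadrightarrow L_1,\ \mathcal{G}\twoheadrightarrow L_2)$ to the rank-one quotient $\mathcal{H}\twoheadrightarrow L_1\otimes L_2$; this is classically a closed immersion.

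The crucial point, and the step I expect to demand the most care, is that $\sigma$ is $\mu_n$-equivariant with respect to the diagonal action on the source and the global action on $\mathbb{P}(\mathcal{H})$. Since the three actions are all induced functorially through $\mathrm{Proj}(\mathrm{Sym}(-))$ from the linear coactions on $\mathcal{F}$, $\mathcal{G}$ and $\mathcal{H}$, and since the formation of $\sigma$ is itself natural in the pair $(\mathcal{F},\mathcal{G})$, equivariance reduces to the assertion that the coaction on $\mathcal{H}$ is precisely the tensor product of those on $\mathcal{F}$ and $\mathcal{G}$, which holds by construction. Concretely, one checks the commutativity of the square relating the action morphisms $m_{\mathbb{P}(\mathcal{F})\times\mathbb{P}(\mathcal{G})}$ and $m_{\mathbb{P}(\mathcal{H})}$ with $\mathrm{id}_{\mu_n}\times\sigma$; in homogeneous coordinates $\sigma$ sends $z_{ij}$ to $x_iy_j$, and assigning to $x_i$ (resp. $y_j$) its weight under the action on $\mathcal{F}$ (resp. $\mathcal{G}$), the coordinate $z_{ij}$ acquires weight $\mathrm{wt}(x_i)+\mathrm{wt}(y_j)$, which is exactly the grading defining the action on $\mathcal{H}$. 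Thus $\sigma$ carries weight vectors to weight vectors compatibly, and the commutativity follows from the functoriality of $\mathrm{Proj}(\mathrm{Sym}(-))$.

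Finally, having produced a $\mu_n$-equivariant closed immersion of $\mathbb{P}_D^r\times\mathbb{P}_D^s$ into $\mathbb{P}(\mathcal{H})$ endowed with its global $\mu_n$-action, I conclude directly from the definition that $\mathbb{P}_D^r\times\mathbb{P}_D^s$ is $\mu_n$-projective. I would remark that this in fact establishes the stronger condition (a) of Lemma~\ref{214}, the target carrying a global action; equivalently, $\sigma^*\mathcal{O}_{\mathbb{P}(\mathcal{H})}(1)=\mathcal{O}(1)\boxtimes\mathcal{O}(1)$ is a $\mu_n$-equivariant very ample invertible sheaf on the product.
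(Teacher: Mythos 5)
Your proposal is correct and follows essentially the same route as the paper's own proof: both endow the tensor product $\mathcal{O}_D^{r+1}\otimes\mathcal{O}_D^{s+1}\cong\mathcal{O}_D^{rs+r+s+1}$ with the induced $\mu_n$-action and conclude via the compatibility of the Segre embedding with the resulting global action on $\mathbb{P}_D^{rs+r+s}$. You merely spell out the equivariance check in weights and coordinates, which the paper leaves implicit.
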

\begin{proof}
	By the definition of global $\mu_n$-action, we have
	$\mu_n$-actions on the following two free sheaves
	$\mathcal{F}_1=\mathcal{O}_D^{r+1}$ and
	$\mathcal{F}_2=\mathcal{O}_D^{s+1}$. It follows that we obtain a $\mu_n$-action on their tensor product
	$$\mathcal{F}_1\otimes\mathcal{F}_2=\mathcal{O}_D^{r+1}\otimes\mathcal{O}_D^{s+1}\cong\mathcal{O}_D^{rs+r+s+1}.$$
	This means the projective space $\mathbb{P}_D^{rs+r+s}$ admits a global $\mu_n$-action which is compatible with the Segre embedding
	$\mathbb{P}_D^r\times
	\mathbb{P}_D^s\hookrightarrow\mathbb{P}_D^{rs+r+s}$, so we are done.
\end{proof}

From Lemma~\ref{211}, Lemma~\ref{214} and Proposition~\ref{215} we obtain the following result immediately.

\begin{cor}\label{216}
	Let $Y_1$ and $Y_2$ be two $\mu_n$-projective schemes, then their fibre product $Y_1\times Y_2$ is also a $\mu_n$-projective scheme.
\end{cor}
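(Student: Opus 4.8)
The plan is to chain together Lemma~\ref{211}, Lemma~\ref{214} and Proposition~\ref{215}, reducing the assertion to the single nontrivial input contained in Proposition~\ref{215}. First I would use the hypothesis that each $Y_j$ (for $j=1,2$) is $\mu_n$-projective together with Lemma~\ref{211} to produce, on each $Y_j$, a $\mu_n$-equivariant very ample invertible sheaf relative to its structure morphism to ${\rm Spec}(D)$. This places each $Y_j$ in the situation of condition (b) of Lemma~\ref{214}.

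Applying the equivalence of Lemma~\ref{214}, condition (a) then holds for each factor, so there exist $\mu_n$-equivariant closed immersions $i_j: Y_j\hookrightarrow\mathbb{P}_D^{r_j}$ into projective spaces carrying global $\mu_n$-actions. I would next form the fibre product $i_1\times i_2: Y_1\times Y_2\rightarrow\mathbb{P}_D^{r_1}\times\mathbb{P}_D^{r_2}$ over $D$; this is again a closed immersion, and it is $\mu_n$-equivariant because the $\mu_n$-action on a fibre product is the one induced diagonally from the two factors. By Proposition~\ref{215} the target $\mathbb{P}_D^{r_1}\times\mathbb{P}_D^{r_2}$ is itself $\mu_n$-projective, hence by definition admits a $\mu_n$-equivariant closed immersion into some $\mathbb{P}_D^N$. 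Composing $i_1\times i_2$ with this immersion yields a $\mu_n$-equivariant closed immersion $Y_1\times Y_2\hookrightarrow\mathbb{P}_D^N$, which is exactly what is required to conclude that $Y_1\times Y_2$ is $\mu_n$-projective.

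The steps that need a little care, though none is deep, are the two compatibility checks: that the product $i_1\times i_2$ of two $\mu_n$-equivariant closed immersions is again a $\mu_n$-equivariant closed immersion (closedness is formal, while equivariance follows from the functoriality of the product construction), and that a composition of $\mu_n$-equivariant closed immersions is a closed immersion. The genuine obstacle, already resolved upstream, is Proposition~\ref{215}: the fact that a product of two projective spaces with global action embeds into a single projective space with global action, which there is handled by checking that the Segre embedding is compatible with the $\mu_n$-action coming from the tensor product of the two free comodules. Once that is in hand, the present corollary is purely formal.
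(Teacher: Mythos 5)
Your proposal is correct and follows exactly the chain the paper intends: Lemma~\ref{211} gives an equivariant very ample sheaf on each factor, Lemma~\ref{214} upgrades the embeddings to projective spaces with \emph{global} actions, and Proposition~\ref{215} (the Segre embedding) finishes. The paper states the corollary follows ``immediately'' from these three results without further detail, so your write-up is simply a fleshed-out version of the same argument.
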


We now turn to the locally free resolution of a complex of
$\mu_n$-equivariant coherent sheaves on a $\mu_n$-scheme.

\begin{defn}\label{217}
	Let $X$ be a $\mu_n$-scheme, and let $\mathcal{F}.$ be a complex of $\mu_n$-equivariant coherent sheaves on $X$. An equivariant locally free resolution of $\mathcal{F}.$ is a complex of $\mu_n$-equivariant coherent locally free sheaves $\mathcal{E}.$,
	together with a surjective equivariant homomorphism of complexes $\varphi.: \mathcal{E}.\rightarrow\mathcal{F}.$ which is quasi-isomorphism.
\end{defn}

\begin{prop}\label{218}
	Let $X$ be a $\mu_n$-scheme such that any $\mu_n$-equivariant coherent sheaf on $X$ is a quotient of $\mu_n$-equivariant coherent locally free sheaf. Then any complex of $\mu_n$-equivariant coherent sheaves $\mathcal{F}.$ admits a $\mu_n$-equivariant locally free resolution.
\end{prop}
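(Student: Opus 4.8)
The plan is to read $\mathcal{F}.$ as a bounded (hence bounded above) complex, which is the relevant case for the relative $K$-theory developed below, and to produce a termwise surjective quasi-isomorphism onto it from a bounded above complex $\mathcal{E}.$ of $\mu_n$-equivariant coherent locally free sheaves. The only input from the hypothesis that I will use is that every $\mu_n$-equivariant coherent sheaf admits an equivariant locally free sheaf surjecting onto it; combined with the fact that the kernel of an equivariant morphism is again equivariant coherent (as $\mu_n$ is flat, $m_X^*$ and $Pr_X^*$ are exact and the equivariant structure passes to kernels, exactly as in the proof of Proposition~\ref{212}), this lets me choose equivariant locally free covers of any equivariant coherent sheaf that arises in the construction.

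The main point, and the main obstacle, is that equivariant locally free sheaves are \emph{not} projective objects, so one cannot resolve $\mathcal{F}.$ termwise and then lift the differentials, nor run the usual induction on the length of the complex via a horseshoe argument; every such route requires lifting a morphism through a surjection. I would circumvent this by resolving the complex globally through its mapping cone. Write $\varphi.\colon\mathcal{E}.\to\mathcal{F}.$ for the map to be built and let $C.$ be its cone, $C^i=\mathcal{E}^{i+1}\oplus\mathcal{F}^i$; then $\varphi.$ is a quasi-isomorphism precisely when $C.$ is acyclic. I would construct $\mathcal{E}^i$, the differential $\mathcal{E}^i\to\mathcal{E}^{i+1}$ and the component $\varphi^i$ simultaneously by descending induction on $i$, starting from the top degree of $\mathcal{F}.$ (where one simply surjects an equivariant locally free sheaf onto $\mathcal{F}^{\mathrm{top}}$). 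At a general step the cocycle sheaf $Z^i(C.)=\ker(C^i\to C^{i+1})$ is $\mu_n$-equivariant coherent, so by hypothesis there is an equivariant locally free sheaf surjecting onto it; reading off the two components of such a surjection defines $\mathcal{E}^i\to\mathcal{E}^{i+1}$ and $\varphi^i$ and forces the image of the cone differential to be exactly $Z^i(C.)$, whence $H^i(C.)=0$. Crucially this step only chooses a surjection onto a \emph{given} sheaf and never lifts a map, so non-projectivity is no obstruction; since $C.$ is bounded above and each cohomology group, once killed, is untouched by the lower-degree stages, the resulting $C.$ is acyclic and $\varphi.$ is a quasi-isomorphism.

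It remains to arrange that $\varphi.$ be termwise surjective, which the construction above does not guarantee. Here I would add a contractible correction term. For each degree $i$ choose an equivariant locally free $\mathcal{Q}^i$ with a surjection $s_i\colon\mathcal{Q}^i\twoheadrightarrow\mathcal{F}^i$, and form the disk complex $D_i=[\mathcal{Q}^i\xrightarrow{\,\mathrm{id}\,}\mathcal{Q}^i]$ placed in degrees $i,i+1$, equipped with the chain map $D_i\to\mathcal{F}.$ given by $s_i$ in degree $i$ and $d^i_{\mathcal{F}}\circ s_i$ in degree $i+1$. Each $D_i$ is acyclic with equivariant locally free terms, so replacing $\mathcal{E}.$ by $\mathcal{E}.\oplus\bigl(\bigoplus_i D_i\bigr)$ and $\varphi.$ by the sum of the two maps keeps $\varphi.$ a quasi-isomorphism (one has only added an acyclic complex to the source, which the connecting maps leave untouched on cohomology) while making it surjective in every degree, since the summand $s_i$ already covers all of $\mathcal{F}^i$. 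Finally I would remark that every object and morphism occurring above---kernels, direct sums, cones and disks---is formed inside the category $\mathcal{C}_X$ of $\mu_n$-equivariant coherent sheaves, so the whole resolution is equivariant, which completes the argument. The only genuine difficulty is the absence of projectivity; once the global cone construction is adopted, both the surjectivization and the equivariance bookkeeping are routine.
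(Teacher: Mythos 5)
Your cone formulation is the paper's construction in disguise: the sheaf $K_{n+1}={\rm Ker}(Z_n\oplus F_{n+1}\rightarrow F_n)$ onto which the paper surjects at each inductive stage is exactly the cycle sheaf of the mapping cone of the partially built $\varphi.$, so your ``global'' argument and the paper's stage-by-stage one coincide (and both handle complexes unbounded in the high homological degrees, so your boundedness assumption is not needed). Your disk-complex correction for termwise surjectivity is harmless but redundant: because each stage surjects onto the \emph{whole} cycle sheaf of the cone, a local section $f$ of $F_{n+1}$ gives the element $(0,d_{n+1}f)$ of $K_n$, which lifts through $E_n\twoheadrightarrow K_n$ to a cycle $e$ of $E_n$ with $\varphi_n(e)=d_{n+1}f$; hence $K_{n+1}\rightarrow F_{n+1}$, and with it $\varphi_{n+1}$, is already surjective.
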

\begin{proof}
	The construction is rather standard in non-equivariant case and also fits the equivariant case. Write $\mathcal{F}.$ as $\cdots\rightarrow F_n\rightarrow\cdots\rightarrow F_0\rightarrow
	0$, by assumption we may choose some surjection $E_0\rightarrow	F_0$ to start. If $\mathcal{E}.$ has been constructed to the $n$-th stage, let $Z_n$ be the kernel of the map $E_n\rightarrow E_{n-1}$ which is $\mu_n$-equivariant, and let $$K_{n+1}={\rm Ker}(Z_n\oplus F_{n+1}\rightarrow F_n)$$ where the map in the brackets takes $(z_n,f_{n+1})$ to
	$\varphi_n(z_n)-d_{n+1}(f_{n+1})$. Choose any surjection of a $\mu_n$-equivariant coherent locally free sheaf $E_{n+1}$ to $K_{n+1}$ to continue the complex one step further. One may check that such a complex $\mathcal{E}.:=\cdots\rightarrow
	E_n\rightarrow\cdots\rightarrow E_0\rightarrow 0$ is what we want.
\end{proof}

\begin{rem}\label{219}
	(i) Any two $\mu_n$-equivariant locally free resolutions can be dominated by a third one;
	
	(ii) If the complex $\mathcal{F}.$ is bounded and all the sheaves	$F_i$ have finite ${\rm Tor}$ dimension i.e. each $F_i$ admits a finite resolution by $\mu_n$-equivariant coherent locally free sheaves. Then the resolving complex $\mathcal{E}.$ can be chosen to be bounded.
\end{rem}

\subsection{Relative $K$-groups and Thom-Gysin homomorphism}
Let $i: X\hookrightarrow Y$ be a $\mu_n$-equivariant closed
immersion, we would like to define the following concept of
equivariant relative $K$-group.

\begin{defn}\label{221}
	For any $\mu_n$-equivariant closed subscheme $X$ of $Y$, we denote by $K_X(Y,\mu_n)$ the free abelian group generated by bounded complexes of homological type $$\mathcal{E}.:\quad 0\rightarrow
	E_n\rightarrow\ldots\rightarrow E_1\rightarrow E_0\rightarrow 0$$
	of $\mu_n$-equivariant coherent locally free sheaves on $Y$, acyclic outside $X$ i.e. for any point outside $X$ the corresponding complex of stalks is exact, modulo the following two kinds of relations:
	
	(i) Given an exact sequence of complexes $$0\rightarrow
	\mathcal{E}'.\rightarrow \mathcal{E}.\rightarrow
	\mathcal{E}''.\rightarrow 0$$ we have
	$[\mathcal{E}.]=[\mathcal{E}'.]+[\mathcal{E}''.]$ in
	$K_X(Y,\mu_n)$.
	
	(ii) Given a quasi-isomorphism from $\mathcal{E}.$ to
	$\mathcal{F}.$ we have $[\mathcal{E}.]=[\mathcal{F}.]$ in
	$K_X(Y,\mu_n)$.
\end{defn}

\begin{rem}\label{222}
	For simplicity, let us denote by $C_X(Y)$ the category of bounded complexes of homological type of $\mu_n$-equivariant coherent locally free sheaves on $Y$ which are acyclic outside $X$. Then any acyclic complex $\mathcal{E}.$ in $C_X(Y)$ represents $0$ in
	$K_X(Y,\mu_n)$ since the natural map $0\rightarrow \mathcal{E}.$ is clearly a quasi-isomorphism.
\end{rem}
\begin{rem}\label{223}
	Let $\mathcal{E}.$ be a complex in $C_X(Y)$ and
	$\mathcal{E}.[-1]=\mathcal{F}.$ the complex such that
	$F_i=E_{i-1}$ and $d_i: F_i\rightarrow F_{i-1}$ equals
	$(-1)^{i-1}$ times $d_{i-1}$. Then we have
	$[\mathcal{E}.[-1]]=-[\mathcal{E}.]$ in $K_X(Y,\mu_n)$. In fact, one may consider the following exact sequence
	$$0\rightarrow \mathcal{E}.\rightarrow \mathcal{C}.\rightarrow \mathcal{E}.[-1]\rightarrow 0,$$
	where $\mathcal{C}.$ is the mapping cone of the identity on $\mathcal{E}.$ i.e.
	$C_i=E_{i-1}\oplus E_i$ with differential $$\left(%
	\begin{array}{cc}
		(-1)^{i-1}d_{i-1} & 0 \\
		{\rm Id}_{E_{i-1}} & d_i \\
	\end{array}%
	\right).$$ One may check that $\mathcal{C}.$ is acyclic which implies that $[\mathcal{C}.]=0$. By induction we can define $\mathcal{E}.[k]$ and we have
	$$[\mathcal{E}.[k]]=(-1)^k[\mathcal{E}.]\quad\text{for all integers}\quad k\leq 0.$$
\end{rem}
\begin{rem}\label{224}
	If $X$ and $X'$ are two $\mu_n$-equivariant closed subschemes of $Y$ such that $X\cap X'$ is a $\mu_n$-equivariant closed subscheme of both $X$ and $X'$. Assume that $\mathcal{E}.$ is in $C_X(Y)$ and $\mathcal{F}.$ is in $C_{X'}(Y)$, then
	$\mathcal{E}.\otimes_{\mathcal{O}_Y}\mathcal{F}.$ is in $C_{X\cap X'}(Y)$. This induces a pairing, called the cup-product, between corresponding relative $K$-groups
	$$\cup:\quad K_X(Y,\mu_n)\otimes K_{X'}(Y,\mu_n)\rightarrow K_{X\cap X'}(Y,\mu_n)$$ with
	$[\mathcal{E}.]\cup [\mathcal{F}.]=[\mathcal{E}.\otimes_{\mathcal{O}_Y}\mathcal{F}.]$.
\end{rem}
\begin{rem}\label{225}
	If $X_i\hookrightarrow Y_i$ $(i=1,2)$ are two $\mu_n$-equivariant closed immersions, we have an external product
	$$\times:\quad K_{X_1}(Y_1,\mu_n)\otimes
	K_{X_2}(Y_2,\mu_n)\rightarrow K_{X_1\times X_2}(Y_1\times
	Y_2,\mu_n)$$ which is given by
	$[\mathcal{E}._1]\times[\mathcal{E}._2]=[\mathcal{E}._1\boxtimes\mathcal{E}._2]$.
\end{rem}
\begin{rem}\label{226}
	Note that if $X$ is a $\mu_n$-equivariant closed subscheme of $Y$, the group $K'_0(X,\mu_n)$ can be identified with the Grothendieck group of the category $M_X(Y)$ of $\mu_n$-equivariant coherent sheaves on $Y$ with supports in $X$. With this observation, let
	$X$ and $X'$ be two $\mu_n$-equivariant closed subschemes of $Y$ which are $\mu_n$-compatible as in Remark~\ref{224}, we may define another important pairing, called the cap-product as follows
	$$\cap:\quad K_X(Y,\mu_n)\times K'_0(X',\mu_n)\rightarrow K'_0(X\cap X',\mu_n)$$
	with $[\mathcal{E}.]\cap[M]=\sum_{i\geq0}(-1)^i[H_i(\mathcal{E}.\otimes_{\mathcal{O}_Y}M)]$.
	In particular, $K'_0(Y,\mu_n)$ has a $K_0(Y,\mu_n)$-module structure.
\end{rem}
\begin{defn}\label{227}
	Let $i: X\hookrightarrow Y$ be a $\mu_n$-equivariant closed immersion, then the cap-product $\cap[\mathcal{O}_Y]$ defined in Remark~\ref{226} induces a map $$h:\quad K_X(Y,\mu_n)\rightarrow
	K_0'(X,\mu_n)$$ which is called the homology map.
\end{defn}
We list some properties of the homology map as follows.
\begin{pro}\label{228}
	With $X_i\hookrightarrow Y_i$ as in Remark~\ref{225}, we have the following commutative diagram
	\begin{displaymath}\xymatrix{
		K_{X_1}(Y_1,\mu_n)\otimes K_{X_2}(Y_2,\mu_n) \ar[rr]^-{\times} \ar[d]_{h\otimes h} && K_{X_1\times X_2}(Y_1\times Y_2,\mu_n) \ar[d]^{h} \\
		K_0'(X_1,\mu_n)\otimes K_0'(X_2,\mu_n) \ar[rr]^-{\times} && K_0'(X_1\times X_2,\mu_n).}
	\end{displaymath}
\end{pro}
\begin{proof}
	This is a standard fact of the homology of the tensor product of two complexes.
\end{proof}

\begin{pro}\label{229}
	Let $X'\hookrightarrow X\hookrightarrow Y$ be a sequence of $\mu_n$-equivariant closed immersions, then we have the following commutative diagram
	\begin{displaymath}\xymatrix{
		K_{X'}(Y,\mu_n) \ar[r]^-{h} \ar[d] & K_0'(X',\mu_n) \ar[d] \\
		K_X(Y,\mu_n) \ar[r]^-{h} & K_0'(X,\mu_n)}
	\end{displaymath}
	where the left vertical map is induced by identity and the right vertical map is induced by the inclusion of $X'$ in $X$.
\end{pro}
\begin{proof}
	The reason is the same as why we can identify $K'_0(X',\mu_n)$ with the Grothendieck group of the category $M_{X'}(Y)$ of $\mu_n$-equivariant coherent sheaves on $Y$ with supports in $X'$.
\end{proof}
\begin{pro}\label{2210}
	If $Y$ is $\mu_n$-projective and regular, then the homology map $h: K_X(Y,\mu_n)\rightarrow K_0'(X,\mu_n)$ is an isomorphism.
\end{pro}
\begin{proof}
	The reason is that any $\mu_n$-equivariant coherent sheaf on $Y$ admits a finite $\mu_n$-equivariant locally free resolution which can be regarded as its inverse image. Remark~\ref{219} implies that this inverse map is well-defined. Then one can use Remark~\ref{223} and an induction argument to prove that this inverse map is surjective.
\end{proof}

To end this section, we give the definition of Thom-Gysin
homomorphism.

\begin{defn}\label{2211}
	Let $X$ be a $\mu_n$-equivariant closed subscheme of $Y$, and let $j: Y\hookrightarrow Z$ be a $\mu_n$-closed immersion with $Z$ regular and $\mu_n$-projective. The Thom-Gysin homomorphism
	$$j_*:\quad K_X(Y,\mu_n)\rightarrow K_X(Z,\mu_n)$$ is defined by $j_*[\mathcal{F}.]=[\mathcal{E}.]$, where $\mathcal{E}.\rightarrow j_*\mathcal{F}.$ is a $\mu_n$-equivariant locally free resolution.
\end{defn}

\begin{prop}\label{2212}
	Let notations and assumptions be as above, then we have the	following commutative diagram
	\begin{displaymath}\xymatrix{
		K_X(Y,\mu_n) \ar[rd]^{h} \ar[dd]_{j_*} & \\ &  \quad K_0'(X,\mu_n)  \\
		K_X(Z,\mu_n) \ar[ru]_{h} &}
	\end{displaymath}
\end{prop}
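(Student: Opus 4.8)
The plan is to unwind both composites into explicit expressions in terms of homology sheaves and then compare them using the fact that $j$ is a closed immersion. First I would take a generator $[\mathcal{F}.]$ of $K_X(Y,\mu_n)$, represented by a bounded complex $\mathcal{F}.$ of $\mu_n$-equivariant coherent locally free sheaves on $Y$ acyclic outside $X$. By Definition~\ref{227} together with Remark~\ref{226}, the homology map sends this to $h([\mathcal{F}.]) = [\mathcal{F}.] \cap [\mathcal{O}_Y] = \sum_{i \geq 0} (-1)^i [H_i(\mathcal{F}.)]$ in $K_0'(X,\mu_n)$, where each $H_i(\mathcal{F}.)$ is a $\mu_n$-equivariant coherent sheaf on $Y$ supported in $X$.

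Next I would run the other composite. By Definition~\ref{2211}, $j_*[\mathcal{F}.] = [\mathcal{E}.]$ where $\mathcal{E}. \to j_*\mathcal{F}.$ is a $\mu_n$-equivariant locally free resolution on $Z$. Since $j$ is a closed immersion, $j_*$ is exact, so the homology sheaves satisfy $H_i(j_*\mathcal{F}.) = j_* H_i(\mathcal{F}.)$ and are supported in $X$; hence $\mathcal{E}.$ is acyclic outside $X$, i.e. $[\mathcal{E}.]$ genuinely lies in $K_X(Z,\mu_n)$. Applying the homology map on $Z$ gives $h([\mathcal{E}.]) = \sum_{i \geq 0} (-1)^i [H_i(\mathcal{E}.)]$, and because $\mathcal{E}. \to j_*\mathcal{F}.$ is a quasi-isomorphism we have $H_i(\mathcal{E}.) \cong H_i(j_*\mathcal{F}.) = j_* H_i(\mathcal{F}.)$. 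Therefore $h(j_*[\mathcal{F}.]) = \sum_{i \geq 0} (-1)^i [j_* H_i(\mathcal{F}.)]$.

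It then remains to compare $\sum_i (-1)^i [H_i(\mathcal{F}.)]$ with $\sum_i (-1)^i [j_* H_i(\mathcal{F}.)]$ inside $K_0'(X,\mu_n)$. The point, and the only real content of the proposition, is that the two homology maps factor through two a priori different descriptions of the target: the map on $K_X(Y,\mu_n)$ realizes $K_0'(X,\mu_n)$ through the category $M_X(Y)$ of $\mu_n$-equivariant coherent sheaves on $Y$ supported in $X$, while the map on $K_X(Z,\mu_n)$ uses $M_X(Z)$, as recorded in Remark~\ref{226}. Both Grothendieck-group descriptions are identified with the intrinsic $K_0'(X,\mu_n)$ by pushing forward along the closed immersions $X \hookrightarrow Y$ and $X \hookrightarrow Z$ respectively. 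Since the composite $X \hookrightarrow Y \hookrightarrow Z$ equals $X \hookrightarrow Z$ and pushforward is functorial, the class $[j_* H_i(\mathcal{F}.)]$ computed in the $M_X(Z)$-model coincides with $[H_i(\mathcal{F}.)]$ computed in the $M_X(Y)$-model for every $i$. This is exactly the mechanism already exploited in Proposition~\ref{229}, and it yields $h \circ j_* = h$, so the triangle commutes.

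The main obstacle is precisely this bookkeeping in the final step: one has to recognize that the common target $K_0'(X,\mu_n)$ of the two homology maps is being presented through different ambient schemes, and verify that $j_*$ intertwines these presentations via the functoriality of pushforward. Everything else, namely the exactness of $j_*$ for a closed immersion, the invariance of homology under quasi-isomorphism, and the support computation placing $\mathcal{E}.$ in $C_X(Z)$, is formal.
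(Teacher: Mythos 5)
Your proof is correct and follows the same route as the paper, which simply observes that the claim is immediate from the definitions because $j$ is a closed immersion and $j_*$ is extension by zero; you have merely filled in the details (exactness of $j_*$, quasi-isomorphism invariance of homology, and the compatibility of the two presentations $M_X(Y)$ and $M_X(Z)$ of $K_0'(X,\mu_n)$, which is the same mechanism as in Property~\ref{229}).
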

\begin{proof}
	It is rather clear by the definitions of Thom-Gysin homomorphism and the homology map, since $j$ is a closed immersion and hence $j_*$ is just extension by $0$.
\end{proof}

\subsection{Modified homology map and modified Thom-Gysin homomorphism}
In this section, we will consider equivariant relative $K$-groups of fixed point schemes and define modified homology map and modified Thom-Gysin homomorphism for $\mu_n$-projective regular envelopes. Such a process of modification is very important for constructing the homomorphisms $L.$ promised in the introduction. We first recall some properties of fixed point schemes.
\begin{pro}\label{231}
	Let $Y$ be a $\mu_n$-equivariant scheme, then the fixed point scheme $Y_{\mu_n}$ is a $\mu_n$-equivariant closed subscheme of $Y$. If $Y$ is regular, then $Y_{\mu_n}$ is also regular.
\end{pro}
\begin{proof}
	The first statement comes from \cite[VIII, 6.5 d]{SGA3}. The second one is proved in \cite{Th}.
\end{proof}
\begin{pro}\label{232}
	Let $Y$ be a $\mu_n$-equivariant scheme, then the fixed point scheme $i_{\mu_n}: Y_{\mu_n}\hookrightarrow Y$ is characterized by the following universal property: if $i: Y'\hookrightarrow Y$ is a $\mu_n$-equivariant closed immersion such that the action on $Y'$ is trivial, then there exists a unique closed immersion $j: Y'\hookrightarrow Y_{\mu_n}$ with $i_{\mu_n}\circ j=i$.
\end{pro}
\begin{pro}\label{233}
	Let $Y_1$ and $Y_2$ be two $\mu_n$-schemes with fixed point schemes ${Y_1}_{\mu_n}$ and ${Y_2}_{\mu_n}$ respectively, then the fixed point scheme of $Y_1\times Y_2$ is ${Y_1}_{\mu_n}\times{Y_2}_{\mu_n}$.
\end{pro}
\begin{proof}
	Denote by $Pr_1$ and $Pr_2$ the canonical projections from $Y_1\times Y_2$ to $Y_1$ and $Y_2$ respectively. Assume that $i_0: Y_0\hookrightarrow Y_1\times Y_2$ is a $\mu_n$-closed immersion such that the action on $Y_0$ is trivial, then $Pr_1(Y_0)\subset
	{Y_1}_{\mu_n}$ and $Pr_2(Y_0)\subset{Y_2}_{\mu_n}$. Therefore we get a unique morphism $i: Y_0\rightarrow {Y_1}_{\mu_n}\times {Y_2}_{\mu_n}$. By Property~\ref{232} to conclude the assertion we only have to prove that $i_0=({i_1}_{\mu_n}\times{i_2}_{\mu_n})\circ i$ and this is clear from a diagram chasing argument.
\end{proof}

\begin{pro}\label{234}
	Let $Y$ be a $\mu_n$-equivariant scheme with fixed point scheme $Y_{\mu_n}$, and let $\mathcal{F}$ be a $\mu_n$-equivariant coherent sheaf on $Y$. Then there is a nature $\Z/{n\Z}$-grading $\mathcal{F}\mid_{Y_{\mu_n}}\simeq\sum_{k\in\Z/{n\Z}}\mathcal{F}_k$ on the restriction of $\mathcal{F}$ to
	$Y_{\mu_n}$. Suppose additionally that $Y$ is regular, then the conormal sheaf $N_{Y/{Y_{\mu_n}}}$ is $\mu_n$-equivariant and has vanishing $0$-degree part.
\end{pro}

\begin{pro}\label{235}
	Let $Y$ be a $\mu_n$-equivariant scheme with fixed point scheme $Y_{\mu_n}$, and let $\mathcal{F}$ be a $\mu_n$-equivariant coherent locally free sheaf on $Y_{\mu_n}$. Write
	$$\lambda_{-1}(\mathcal{F}):=\sum_{l=0}^{{\rm
			rk}(\mathcal{F})}(-1)^l\wedge^l(\mathcal{F})\in
	K_0'(Y_{\mu_n},\mu_n),$$ then for any short exact sequence
	$$0\rightarrow \mathcal{F}'\rightarrow
	\mathcal{F}\rightarrow\mathcal{F}''\rightarrow 0$$ we have $\lambda_{-1}(\mathcal{F})=\lambda_{-1}(\mathcal{F}')\cdot\lambda_{-1}(\mathcal{F}'')$.
\end{pro}

\begin{prop}\label{236}
	Let $Y$ be a $\mu_n$-equivariant regular scheme with fixed point scheme $Y_{\mu_n}$, then the element
	$\lambda_{-1}(N_{Y/{Y_{\mu_n}}})$ is invertible in
	$K_0'(Y_{\mu_n},\mu_n)\otimes_{R(\mu_n)} \mathcal{R}$.
\end{prop}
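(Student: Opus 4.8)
The plan is to reduce the invertibility of $\lambda_{-1}(N_{Y/Y_{\mu_n}})$ to a character computation carried out modulo a nilpotent ideal. First I would invoke Property~\ref{234}: since $Y$ is regular the immersion $Y_{\mu_n}\hookrightarrow Y$ is a regular immersion of regular schemes, so the conormal sheaf $\mathcal{N}:=N_{Y/Y_{\mu_n}}$ is a $\mu_n$-equivariant \emph{locally free} sheaf on $Y_{\mu_n}$, and its $\Z/n\Z$-grading $\mathcal{N}\simeq\bigoplus_{k\in\Z/n\Z}\mathcal{N}_k$ has vanishing $0$-degree part, so $\mathcal{N}\simeq\bigoplus_{k=1}^{n-1}\mathcal{N}_k$ with each $\mathcal{N}_k$ locally free. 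The grading splits $\mathcal{N}$ through short exact sequences, so iterating the multiplicativity of Property~\ref{235} gives $\lambda_{-1}(\mathcal{N})=\prod_{k=1}^{n-1}\lambda_{-1}(\mathcal{N}_k)$; it therefore suffices to show that each factor $\lambda_{-1}(\mathcal{N}_k)$ with $k\neq 0$ is invertible.

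Since $Y_{\mu_n}$ carries the trivial $\mu_n$-action, a $\mu_n$-equivariant sheaf on it is the same as a $\Z/n\Z$-graded sheaf, and there is a ring isomorphism $K_0'(Y_{\mu_n},\mu_n)\cong K_0(Y_{\mu_n})\otimes_{\Z}\Z[N]$ under which the class $T^k\in R(\mu_n)$ shifts the grading. Under this identification I would write $\mathcal{N}_k=\mathcal{G}_k\otimes\chi_k$ with $\mathcal{G}_k$ the underlying locally free sheaf and $\chi_k$ the character of degree $k$; because $\mu_n$ acts diagonally on exterior powers one has $\wedge^l\mathcal{N}_k\simeq(\wedge^l\mathcal{G}_k)\otimes\chi_{lk}$, whence $[\wedge^l\mathcal{N}_k]=[\wedge^l\mathcal{G}_k]\,T^{lk}$ and
\[
\lambda_{-1}(\mathcal{N}_k)=\sum_{l=0}^{r_k}(-1)^l[\wedge^l\mathcal{G}_k]\,T^{lk},\qquad r_k:=\mathrm{rk}(\mathcal{G}_k).
\]

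The key step is a reduction modulo nilpotents. Because $Y$ is regular, $Y_{\mu_n}$ is regular by Property~\ref{231}, hence $K_0'(Y_{\mu_n})=K_0(Y_{\mu_n})$ and the augmentation ideal $I=\ker(\mathrm{rk}\colon K_0(Y_{\mu_n})\to\prod_i\Z)$ is nilpotent: on a regular Noetherian scheme of finite Krull dimension $d$ the $\gamma$-filtration satisfies $I^{\,d+1}\subseteq F^{d+1}_\gamma=0$. Its image in $K_0'(Y_{\mu_n},\mu_n)\otimes_{R(\mu_n)}\mathcal{R}$ is therefore still nilpotent. Working modulo this ideal I may replace each $[\wedge^l\mathcal{G}_k]$ by its rank $\binom{r_k}{l}$ (component by component), which yields
\[
\lambda_{-1}(\mathcal{N}_k)\equiv\sum_{l=0}^{r_k}(-1)^l\binom{r_k}{l}T^{lk}=(1-T^k)^{r_k}\pmod{\text{nilpotent}}.
\]
By hypothesis $1-T^k$ is invertible in $\mathcal{R}$ for $k=1,\dots,n-1$, so $(1-T^k)^{r_k}$ is a unit; an element congruent to a unit modulo a nilpotent ideal is itself a unit, hence each $\lambda_{-1}(\mathcal{N}_k)$ is invertible and so is the product $\lambda_{-1}(\mathcal{N})$.

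I expect the main obstacle to be the nilpotence input: one must know that $Y_{\mu_n}$ is regular (supplied by Property~\ref{231}) and of finite Krull dimension so that the augmentation ideal of $K_0(Y_{\mu_n})$ is nilpotent, and one must check that this nilpotence, together with the grading decomposition and the identification of $T^k$, survives the base change $-\otimes_{R(\mu_n)}\mathcal{R}$, where the flatness of $\mathcal{R}$ is used. The remaining bookkeeping—that exterior powers add characters and that the $0$-graded part drops out—is routine given Properties~\ref{234} and~\ref{235}.
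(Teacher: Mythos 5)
Your argument is correct and is essentially the paper's approach: the authors' entire proof consists of invoking Property~\ref{234} for the vanishing of the $0$-degree part and then citing \cite[Lemma 4.5]{KR}, and what you have written is precisely the standard proof of that cited lemma (grading decomposition, $\lambda_{-1}(\mathcal{N}_k)\equiv(1-T^k)^{r_k}$ modulo the nilpotent augmentation ideal of $K_0(Y_{\mu_n})$, and invertibility of $1-T^k$ in $\mathcal{R}$). The only point to keep in view is that the nilpotence of the augmentation ideal uses that $Y_{\mu_n}$ is regular, finite-dimensional and quasi-projective over $D$, which holds in all cases where the proposition is applied.
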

\begin{proof}
	By Property~\ref{234}, $N_{Y/{Y_{\mu_n}}}$ has vanishing
	$0$-degree part, then the statement is a consequence of
	\cite[Lemma 4.5]{KR}.
\end{proof}

\begin{defn}\label{237}
	Let $X$ be a $\mu_n$-scheme, and let $i: X\hookrightarrow Y$ be a $\mu_n$-projective regular envelope which induces a $\mu_n$-projective regular envelope of fixed point scheme $i_{\mu_n}: X_{\mu_n}\hookrightarrow Y_{\mu_n}$. We define the
	modified homology map 
	$$\widetilde{h}:\quad
	K_{X_{\mu_n}}(Y_{\mu_n},\mu_n)\otimes_{R(\mu_n)}\mathcal{R}\rightarrow
	K_0'(X_{\mu_n},\mu_n)\otimes_{R(\mu_n)}\mathcal{R}$$ which maps $\mathcal{E}.\otimes 1$ to
	$i_{\mu_n}^*\lambda_{-1}^{-1}(N_{Y/{Y_{\mu_n}}})\cap
	h(\mathcal{E}.)$. Here $h$ is the homology map defined in
	Definition~\ref{227}.
\end{defn}

Such modified homology maps also satisfy similar properties we list in last section, most of the proofs rely on
Property~\ref{235} and the standard exact sequence relating the normal bundles of a sequence of closed immersions of regular schemes.

\begin{pro}\label{238}
	Let $X_1\subset Y$ and $X_2\subset P$ be two $\mu_n$-projective regular envelopes with $P$ some projective space, then the following diagram commutes:
	\begin{displaymath}\xymatrix{
		K_{{X_1}_{\mu_n}}(Y_{\mu_n},\mu_n)_\mathcal{R}\otimes K_{{X_2}_{\mu_n}}(P_{\mu_n},\mu_n)_\mathcal{R} \ar[rr]^-{\times} \ar[d]_{\widetilde{h}\otimes \widetilde{h}} && K_{{(X_1\times X_2)}_{\mu_n}}((Y_1\times P)_{\mu_n},\mu_n)_\mathcal{R} \ar[d]^{\widetilde{h}} \\
		K_0'({X_1}_{\mu_n},\mu_n)_\mathcal{R}\otimes K_0'({X_2}_{\mu_n},\mu_n)_\mathcal{R} \ar[rr]^-{\times} && K_0'((X_1\times X_2)_{\mu_n},\mu_n)_\mathcal{R}.}
	\end{displaymath}
\end{pro}
\begin{proof}
	From Property~\ref{233} we know that $(Y\times
	P)_{\mu_n}={Y}_{\mu_n}\times P_{\mu_n}$ and $(X_1\times
	X_2)_{\mu_n}={X_1}_{\mu_n}\times {X_2}_{\mu_n}$. Note that $P$ is projective space so that $Y\times P$ and $Y_{\mu_n}\times P$ are both regular, then we may consider the following sequence of regular embeddings $Y_{\mu_n}\times P_{\mu_n}\hookrightarrow
	Y_{\mu_n}\times P\hookrightarrow Y\times P$ which induces a short exact sequence $$0\rightarrow N_{{Y\times P}/{Y_{\mu_n}\times P}}\rightarrow N_{{Y\times P}/{Y_{\mu_n}\times P_{\mu_n}}}\rightarrow N_{{Y_{\mu_n}\times P}/{Y_{\mu_n}\times P_{\mu_n}}}\rightarrow 0.$$ If one remembers that pull-back of ideal sheaves is an operation invariant under base change, one may
	concludes from Property~\ref{235} that
	$$\lambda_{-1}(N_{{Y\times P}/{Y_{\mu_n}\times P_{\mu_n}}})=\lambda_{-1}(N_{Y/{Y_{\mu_n}}})\boxtimes\lambda_{-1}(N_{P/{P_{\mu_n}}})$$
	which covers the only gap between our statement and
	Property~\ref{228}.
\end{proof}

\begin{pro}\label{2310}
	Let $X'\hookrightarrow X\hookrightarrow Y$ be a sequence of $\mu_n$-equivariant closed immersions with $Y$ regular and $\mu_n$-projective, then we have the following commutative diagram
	\begin{displaymath}\xymatrix{
		K_{X'_{\mu_n}}(Y_{\mu_n},\mu_n)_\mathcal{R} \ar[r]^-{\widetilde{h}} \ar[d] & K_0'(X'_{\mu_n},\mu_n)_\mathcal{R} \ar[d] \\
		K_{X_{\mu_n}}(Y_{\mu_n},\mu_n)_\mathcal{R} \ar[r]^-{\widetilde{h}} & K_0'(X_{\mu_n},\mu_n)_\mathcal{R}}
	\end{displaymath}
	where the left vertical map is induced by identity and the right vertical map is induced by the inclusion of $X'_{\mu_n}$ in $X_{\mu_n}$.
\end{pro}
\begin{proof}
	More or less the same as Property~\ref{229}.
\end{proof}

\begin{pro}\label{2311}
	The modified homology map $\widetilde{h}$ is an isomorphism.
\end{pro}
\begin{proof}
	Note that $i_{\mu_n}^*\lambda_{-1}^{-1}(N_{Y/{Y_{\mu_n}}})$ is
	invertible since pull-back is a ring homomorphism, then it follows from Property~\ref{2210}.
\end{proof}

As before, let $Y$ be a $\mu_n$-equivariant scheme with fixed
point scheme $Y_{\mu_n}$. Assume that $\mathcal{F}$ is a
$\mu_n$-equivariant locally free sheaf on $Y$, we will denote by $\mathcal{F}^{(\times)}$ the non-zero degree part of $\mathcal{F}\mid_{Y_{\mu_n}}$.

\begin{defn}\label{2312}
	Let $X$ be a $\mu_n$-scheme, and let $j: Y\rightarrow Z$ be a $\mu_n$-equivariant closed immersion between two
	$\mu_n$-projective regular envelopes of $X$. The modified
	Thom-Gysin homomorphism $$\widetilde{j_*}:\quad
	K_{X_{\mu_n}}(Y_{\mu_n},\mu_n)_\mathcal{R}\rightarrow
	K_{X_{\mu_n}}(Z_{\mu_n},\mu_n)_\mathcal{R}$$ is defined by the formula $\widetilde{j_*}(\mathcal{E}.\otimes
	1)={j_{\mu_n}}_*(\lambda_{-1}N_{Z/Y}^{(\times)}\cdot
	\mathcal{E}.)\otimes 1$ where ${j_{\mu_n}}_*$ is the Thom-Gysin map defined in Definition~\ref{2211}.
\end{defn}

\begin{prop}\label{2313}
	Let notations and assumptions be as above, then we have the following commutative diagram
	\begin{displaymath}\xymatrix{
		K_{X_{\mu_n}}(Y_{\mu_n},\mu_n)_\mathcal{R} \ar[rd]^{\widetilde{h}} \ar[dd]_{\widetilde{j_*}} & \\ &  \quad K_0'(X_{\mu_n},\mu_n)_\mathcal{R}  \\
		K_{X_{\mu_n}}(Z_{\mu_n},\mu_n)_\mathcal{R} \ar[ru]_{\widetilde{h}} &}
	\end{displaymath}
\end{prop}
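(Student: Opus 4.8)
The plan is to push both legs of the triangle down onto the plain homology map $h$ and thereby collapse the whole statement into a single multiplicative identity of $\lambda_{-1}$-classes on $Y_{\mu_n}$. Write $i\colon X_{\mu_n}\hookrightarrow Y_{\mu_n}$, $i'\colon X_{\mu_n}\hookrightarrow Z_{\mu_n}$ and $j_{\mu_n}\colon Y_{\mu_n}\hookrightarrow Z_{\mu_n}$ for the induced immersions, so that $i'=j_{\mu_n}\circ i$ and $i'^{*}=i^{*}\circ j_{\mu_n}^{*}$. Unravelling Definition~\ref{237} and Definition~\ref{2312}, the asserted commutativity is exactly the statement that, for every class $\mathcal{E}.$,
$$i'^{*}\lambda_{-1}^{-1}(N_{Z/Z_{\mu_n}})\cap h\big({j_{\mu_n}}_{*}(\lambda_{-1}(N_{Z/Y}^{(\times)})\cdot\mathcal{E}.)\big)=i^{*}\lambda_{-1}^{-1}(N_{Y/Y_{\mu_n}})\cap h(\mathcal{E}.).$$

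I would first simplify the left-hand side. The fixed point scheme $Z_{\mu_n}$ is regular (Property~\ref{231}) and $\mu_n$-projective, being a closed subscheme of the $\mu_n$-projective scheme $Z$, so Proposition~\ref{2212} applies to $j_{\mu_n}$ and yields $h\circ{j_{\mu_n}}_{*}=h$; the homology map may therefore be applied directly to $\lambda_{-1}(N_{Z/Y}^{(\times)})\cdot\mathcal{E}.$ in $K_{X_{\mu_n}}(Y_{\mu_n},\mu_n)_{\mathcal{R}}$. Since $\lambda_{-1}(N_{Z/Y}^{(\times)})$ is a class of honest locally free sheaves, recalling from Definition~\ref{227} that $h$ is the cap-product with $[\mathcal{O}_{Y_{\mu_n}}]$ and using the compatibility of cup- and cap-products (Remark~\ref{226}), one obtains the projection formula $h(\lambda_{-1}(N_{Z/Y}^{(\times)})\cdot\mathcal{E}.)=i^{*}\lambda_{-1}(N_{Z/Y}^{(\times)})\cap h(\mathcal{E}.)$. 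Using $i'^{*}=i^{*}\circ j_{\mu_n}^{*}$ and $j_{\mu_n}^{*}N_{Z/Z_{\mu_n}}=N_{Z/Z_{\mu_n}}\mid_{Y_{\mu_n}}$, the left-hand side becomes $i^{*}\big(\lambda_{-1}^{-1}(N_{Z/Z_{\mu_n}}\mid_{Y_{\mu_n}})\cdot\lambda_{-1}(N_{Z/Y}^{(\times)})\big)\cap h(\mathcal{E}.)$, while the right-hand side is $i^{*}\lambda_{-1}^{-1}(N_{Y/Y_{\mu_n}})\cap h(\mathcal{E}.)$. As $i^{*}$ is a ring homomorphism it therefore suffices to prove, in $K_{0}'(Y_{\mu_n},\mu_n)_{\mathcal{R}}$, the single identity
$$\lambda_{-1}(N_{Z/Z_{\mu_n}}\mid_{Y_{\mu_n}})=\lambda_{-1}(N_{Z/Y}^{(\times)})\cdot\lambda_{-1}(N_{Y/Y_{\mu_n}}),\qquad(\star)$$
all three factors being invertible by Proposition~\ref{236} because each of the sheaves involved has vanishing $0$-degree part.

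To prove $(\star)$ I would play off the two flags of regular immersions $Y_{\mu_n}\hookrightarrow Y\hookrightarrow Z$ and $Y_{\mu_n}\hookrightarrow Z_{\mu_n}\hookrightarrow Z$, which share the conormal sheaf $M:=N_{Z/Y_{\mu_n}}$ of $Y_{\mu_n}$ in $Z$. The associated conormal exact sequences are
$$0\to N_{Z/Y}\mid_{Y_{\mu_n}}\to M\to N_{Y/Y_{\mu_n}}\to 0,\qquad 0\to N_{Z/Z_{\mu_n}}\mid_{Y_{\mu_n}}\to M\to N_{Z_{\mu_n}/Y_{\mu_n}}\to 0.$$
Every term is a $\mu_n$-equivariant sheaf on the trivial-action scheme $Y_{\mu_n}$, hence $\Z/{n\Z}$-graded, and both sequences split into their graded summands. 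I then read off the non-zero-degree parts: in the second sequence $N_{Z/Z_{\mu_n}}\mid_{Y_{\mu_n}}$ has vanishing $0$-degree part (Property~\ref{234}, $Z$ regular) whereas $N_{Z_{\mu_n}/Y_{\mu_n}}$ sits entirely in degree $0$ (both $Y_{\mu_n}$ and $Z_{\mu_n}$ carry the trivial action), which forces $N_{Z/Z_{\mu_n}}\mid_{Y_{\mu_n}}\cong M^{(\times)}$; and the non-zero-degree part of the first sequence reads $0\to N_{Z/Y}^{(\times)}\to M^{(\times)}\to N_{Y/Y_{\mu_n}}\to 0$, since $N_{Y/Y_{\mu_n}}$ already has vanishing $0$-degree part. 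Applying the multiplicativity of $\lambda_{-1}$ (Property~\ref{235}) to this last short exact sequence gives precisely $(\star)$.

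The one genuinely delicate point, which I would isolate at the outset, is the legitimacy of the two conormal sequences, and in particular the scheme-theoretic identity $Y_{\mu_n}=Y\times_{Z}Z_{\mu_n}$: this is what makes $j_{\mu_n}$ the base change of $j$ and guarantees that the lower flag really consists of regular immersions of regular schemes. The identity follows from the universal property of the fixed point scheme (Property~\ref{232}): the closed subscheme $Y\cap Z_{\mu_n}$ of $Y$ carries the trivial $\mu_n$-action, so it factors through $Y_{\mu_n}$, while $Y_{\mu_n}\subset Y\subset Z$ has trivial action and hence factors through $Z_{\mu_n}$, and the two resulting closed immersions are mutually inverse. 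Granting this, the remainder is pure bookkeeping of graded pieces together with Properties~\ref{234} and~\ref{235}, and the final expression is manifestly independent of the representative $\mathcal{E}.$, as the diagram demands.
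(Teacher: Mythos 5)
Your proposal is correct and follows essentially the same route as the paper: both reduce the triangle, via Proposition~\ref{2212} and the definitions of $\widetilde{h}$ and $\widetilde{j_*}$, to the identity $\lambda_{-1}(N_{Z/{Z_{\mu_n}}})=\lambda_{-1}(N_{Z/Y}^{(\times)})\cdot\lambda_{-1}(N_{Y/{Y_{\mu_n}}})$, and both prove it by comparing the conormal exact sequences of the two flags $Y_{\mu_n}\hookrightarrow Y\hookrightarrow Z$ and $Y_{\mu_n}\hookrightarrow Z_{\mu_n}\hookrightarrow Z$ through their common middle term $N_{Z/{Y_{\mu_n}}}$, using that $N_{Z_{\mu_n}/Y_{\mu_n}}$ is concentrated in degree $0$ (which you argue directly where the paper cites \cite[Lemma 6.7]{KR}). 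Your write-up merely makes explicit the projection-formula and base-change bookkeeping that the paper leaves implicit.
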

\begin{proof}
	For simplicity, we omit all necessary notations of
	pull-backs. Consider the following sequence of regular embeddings $Y_{\mu_n}\hookrightarrow Y\hookrightarrow Z$ which induces a short exact sequence $$0\rightarrow N_{Z/Y}\rightarrow N_{Z/{Y_{\mu_n}}}\rightarrow N_{Y/{Y_{\mu_n}}}\rightarrow 0.$$
	Using Property~\ref{235} on the non-zero degree part of this short exact sequence we have
	$$\lambda_{-1}(N_{Z/{Y_{\mu_n}}}^{(\times)})=\lambda_{-1}(N_{Z/Y}^{(\times)})\cdot\lambda_{-1}(N_{Y/{Y_{\mu_n}}})$$
	since the $0$-degree part of $N_{Y/{Y_{\mu_n}}}$ is trivial. Similarly, by considering another sequence of regular embedding
	$Y_{\mu_n}\hookrightarrow Z_{\mu_n}\hookrightarrow Z$, we obtain
	$$\lambda_{-1}(N_{Z/{Y_{\mu_n}}}^{(\times)})=\lambda_{-1}(N_{Z/{Z_{\mu_n}}})\cdot\lambda_{-1}(N_{{Z_{\mu_n}}/{Y_{\mu_n}}}^{(\times)})$$
	since the $0$-degree part of $N_{Z/{Z_{\mu_n}}}$ is trivial. Moreover, by \cite[Lemma 6.7]{KR}, $N_{{Z_{\mu_n}}/{Y_{\mu_n}}}$ has no non-zero degree part and hence $\lambda_{-1}(N_{{Z_{\mu_n}}/{Y_{\mu_n}}}^{(\times)})=1$,so the commutative diagram comes from definitions and
	Proposition~\ref{2212}.
\end{proof}

\section{Deformation to the normal cone}
Assume that $X$ is a $\mu_n$-scheme and that $Y$ is a $\mu_n$-projective regular envelope. Let's define a homomorphism
$$L:\quad K_X(Y,\mu_n)\rightarrow K_{X_{\mu_n}}(Y_{\mu_n},\mu_n)_\mathcal{R}$$ to be the pull-back induced by the inclusion of $Y_{\mu_n}$ in $Y$, followed by the base extension from $R(\mu_n)$ to $\mathcal{R}$. The aim of this section is to prove the following theorem by using the classical
method deformation to the normal cone.

\begin{thm}\label{301}
	Let notations and assumptions be as above. Assume that $Z$ is another $\mu_n$-projective regular envelope of $X$ such that there exists a $\mu_n$-equivariant closed immersion $j: Y\hookrightarrow Z$, then the following diagram commutes.
	\begin{displaymath}\xymatrix{
		K_X(Y,\mu_n) \ar[r]^-{L} \ar[d]_{j_*} & K_{X_{\mu_n}}(Y_{\mu_n},\mu_n)_\mathcal{R} \ar[d]^{\widetilde{j_*}} \\
		K_X(Z,\mu_n) \ar[r]^-{L} & K_{X_{\mu_n}}(Z_{\mu_n},\mu_n)_\mathcal{R}}
	\end{displaymath}
\end{thm}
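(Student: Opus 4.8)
Since every arrow in the square is a homomorphism and $K_X(Y,\mu_n)$ is generated by the classes $[\mathcal F.]$ of single complexes, it suffices to check commutativity on such a generator. I would first cut the problem down to the group $K_0'(X_{\mu_n},\mu_n)_\mathcal R$: by Property~\ref{231} the fixed locus $Z_{\mu_n}$ is regular, and being closed in the $\mu_n$-projective $Z$ it is $\mu_n$-projective, so Property~\ref{2210}, base-changed along the flat map $R(\mu_n)\to\mathcal R$, shows that the homology map $h\colon K_{X_{\mu_n}}(Z_{\mu_n},\mu_n)_\mathcal R\to K_0'(X_{\mu_n},\mu_n)_\mathcal R$ is an isomorphism, in particular injective. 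Hence it is enough to prove $h\bigl(L(j_*[\mathcal F.])\bigr)=h\bigl(\widetilde{j_*}(L[\mathcal F.])\bigr)$ in $K_0'(X_{\mu_n},\mu_n)_\mathcal R$, which is the form amenable to a family argument.

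The plan is to deform $j$ to the zero-section of its normal bundle. I would form the equivariant deformation space $W$ (the blow-up of $Z\times\mathbb P^1$ along $Y\times\{\infty\}$, with the proper transform of $Z\times\{\infty\}$ removed), carrying the $\mu_n$-action that fixes $\mathbb P^1$. It is flat over $\mathbb P^1$, regular, contains $\mathcal Y:=Y\times\mathbb P^1$ as a closed subscheme, and has fibre $(Z,j)$ over $t\neq\infty$ and fibre $(N,s)$ over $\infty$, where $N$ is the normal bundle of $Y$ in $Z$ and $s$ its zero-section. Pulling $[\mathcal F.]$ back along $\mathcal Y\to Y$ and applying the Thom-Gysin map of $\mathcal Y\hookrightarrow W$ from Definition~\ref{2211} produces a class $\Theta\in K_{X\times\mathbb P^1}(W,\mu_n)$ whose restriction to the fibre over $t\neq\infty$ is $j_*[\mathcal F.]$ and whose restriction over $\infty$ is $s_*[\mathcal F.]$, using that the Thom-Gysin map commutes with the (Tor-independent) fibre restrictions.

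Next I would compare the two special fibres after passing to fixed points. The scheme $W_{\mu_n}$ is regular by Property~\ref{231} and is again flat over $\mathbb P^1$, with fibres $Z_{\mu_n}$ over $t\neq\infty$ and $N_{\mu_n}$ over $\infty$. Restricting $\Theta$ to $W_{\mu_n}$ and applying the homology map lands a single class in $K_0'(X_{\mu_n}\times\mathbb P^1,\mu_n)_\mathcal R$. The two fibre-restriction maps $K_0'(X_{\mu_n}\times\mathbb P^1,\mu_n)_\mathcal R\to K_0'(X_{\mu_n},\mu_n)_\mathcal R$ over $0$ and $\infty$ coincide, since they are cap-product with the classes of the linearly equivalent fibres $X_{\mu_n}\times\{0\}$ and $X_{\mu_n}\times\{\infty\}$. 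Combined with the compatibility of the homology map with fibre restriction, this yields $h\bigl(L(j_*[\mathcal F.])\bigr)=h\bigl(L(s_*[\mathcal F.])\bigr)$.

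It remains to evaluate the zero-section term. Over $\infty$ the Thom-Gysin map is realised by the Koszul resolution of $s\colon Y\hookrightarrow N$, so $s_*[\mathcal F.]$ is $\pi^*[\mathcal F.]$ cupped with $\lambda_{-1}(N^\vee)$. Restricting to $N_{\mu_n}$, the tautological section vanishes along the non-fixed directions, so the Koszul complex splits as the zero-section Koszul complex of the fixed subbundle tensored with the zero-differential exterior algebra of the non-zero-weight conormal part; by Property~\ref{235} the latter factor contributes exactly $\lambda_{-1}(N_{Z/Y}^{(\times)})$. The projection formula then gives
\[
L(s_*[\mathcal F.])={s_{\mu_n}}_*\bigl(\lambda_{-1}(N_{Z/Y}^{(\times)})\cdot L[\mathcal F.]\bigr),
\]
which is $\widetilde{s_*}(L[\mathcal F.])$ and uses the same normal data as $\widetilde{j_*}$ in Definition~\ref{2312}. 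Applying Proposition~\ref{2212} to both ${s_{\mu_n}}_*$ and ${j_{\mu_n}}_*$ gives $h\bigl({s_{\mu_n}}_*(-)\bigr)=h_{Y_{\mu_n}}(-)=h\bigl({j_{\mu_n}}_*(-)\bigr)$, so $h\bigl(L(s_*[\mathcal F.])\bigr)=h\bigl(\widetilde{j_*}(L[\mathcal F.])\bigr)$, which closes the chain and, by injectivity of $h$, finishes the proof. The main obstacle is the middle two steps: carrying out the deformation to the normal cone equivariantly while keeping $W$ regular and inside the $\mu_n$-projective framework in which Thom-Gysin and $L$ are defined (the open deformation space is only quasi-projective), verifying that taking fixed points commutes with the construction so that the special fibre of $W_{\mu_n}$ is genuinely $N_{\mu_n}$, and establishing the base-change compatibilities of the Thom-Gysin map and the homology map with fibre restriction.
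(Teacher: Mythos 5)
Your overall strategy is the same as the paper's: deform $j$ to the zero--section of its normal bundle, evaluate the zero--section case by a Koszul computation at the fixed locus, and transfer the identity back via the deformation space. The reduction to injectivity of the homology map on $K_{X_{\mu_n}}(Z_{\mu_n},\mu_n)_{\mathcal R}$ is sound (it is Property~\ref{2210} plus flat base change, exactly as in Property~\ref{2311}), and your Koszul analysis at the fixed locus is in substance the paper's Section~3.1 computation, whose precise form rests on the factorization of $Y_{\mu_n}\hookrightarrow P_{\mu_n}$ through $\mathbb{P}(N_{Z/Y}^{(0)}\oplus\mathcal O_{Y_{\mu_n}})$ (Lemma~\ref{311}).

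However, there is a genuine gap that you flag but do not close, and it is not a technicality: you build the \emph{open} deformation space $W$ (the blow-up with the proper transform of $Z\times\{\infty\}$ removed) and work with the affine normal bundle $N$. Neither $W$ nor $N$ is $\mu_n$-projective, and every tool you invoke on them lies outside the framework of the paper: the Thom--Gysin map of Definition~\ref{2211} is only defined when the ambient scheme is regular \emph{and} $\mu_n$-projective, because the existence of equivariant locally free resolutions (Proposition~\ref{212}) is derived from an equivariant very ample sheaf (Lemma~\ref{211}); likewise $\widetilde h$ and $L$ are only available for $\mu_n$-projective regular envelopes. So the class $\Theta\in K_{X\times\mathbb P^1}(W,\mu_n)$ and the term $L(s_*[\mathcal F.])$ on $N_{\mu_n}$ are not constructed. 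The repair is exactly what the paper does: keep the full blow-up $M={\rm Bl}_{Y(\infty)}(\mathbb P^1_Z)$, which is $\mu_n$-projective and regular, replace $N$ by the exceptional divisor $P=\mathbb{P}(N_{Z/Y}\oplus\mathcal O_Y)$ with its zero--section embedding and tautological Koszul resolution of the universal hyperplane sheaf, establish Tor-independence of the two squares of the deformation diagram and of its fixed-point analogue (Propositions~\ref{321} and~\ref{322}, the latter again needing Lemma~\ref{311}), and then conclude by the diagram chase of Lemma~\ref{323} (surjectivity of $s_0^*$ and $\ker {g'_{\mu_n}}^*\subset\ker {g_{\mu_n}}^*$), which plays the role of your homotopy-invariance step. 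Your "linearly equivalent fibres" argument on $K_0'(X_{\mu_n}\times\mathbb P^1,\mu_n)_{\mathcal R}$ is plausible but would itself need Lemma~\ref{414} and a Tor-independence check; the paper's kernel comparison avoids leaving the relative $K$-groups. Until the compactification issue is addressed, the middle of your argument does not go through as written.
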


\subsection{The case of zero section embedding}
Denote by $N_{Z/Y}$ the conormal sheaf of the closed immersion $j: Y\hookrightarrow Z$. Let $P:=\mathbb{P}(N_{Z/Y}\oplus \mathcal{O}_Y)$ be the projective space bundle associated to $N_{Z/Y}\oplus \mathcal{O}_Y$ with the projection $\pi: P\rightarrow Y$. By the universal property of projective space bundle, the surjection $N_{Z/Y}\oplus \mathcal{O}_Y\rightarrow \mathcal{O}_Y$ induces a closed immersion $i: Y\rightarrow P$
which is called the zero section embedding (cf. \cite[Chapter IV]{FL}). In this section, we first prove Theorem~\ref{301} for the zero section embedding above. Consider the following tautological exact sequence 
$$0\rightarrow \mathcal{L}\rightarrow \pi^*(N_{Z/Y}\oplus
\mathcal{O}_Y)\rightarrow\mathcal{O}_P(1)\rightarrow 0$$ 
where $\mathcal{L}$ is the universal hyperplane sheaf on $P$. It is well known that $\mathcal{L}$ admits a canonical regular section which induces a global Koszul resolution $$0\rightarrow \wedge^{{\rm rk}\mathcal{L}}\mathcal{L}\rightarrow\cdots\rightarrow\wedge^3\mathcal{L}\rightarrow
\wedge^2\mathcal{L}\rightarrow\mathcal{L}\rightarrow\mathcal{O}_P\rightarrow
i_*\mathcal{O}_Y\rightarrow 0.$$ 
Note that $i$ is a section of $\pi$ i.e. $\pi\circ i=id_Y$, then for any locally free sheaf $E$ on $Y$ we have $i^*\pi^*E=E$. By projection formula, we obtain
$$i_*E=i_*(i^*\pi^*E\otimes \mathcal{O}_Y)=\pi^*E\otimes
i_*\mathcal{O}_Y.$$ Now let $[\mathcal{E}.]$ be an element in $K_X(Y,\mu_n)$, from Koszul resolution and a little homological arguments one may show that $i_*[\mathcal{E}.]=[\pi^*\mathcal{E}.\otimes\wedge^{\bullet}\mathcal{L}]$ (cf. \cite[Chapter V, Homological Appendix]{FL}). Denote by $i_P$ the inclusion of $P_{\mu_n}$ in $P$, then $L(i_*[\mathcal{E}.])=[i_P^*\pi^*\mathcal{E}.\otimes\wedge^{\bullet}i_P^*\mathcal{L}]\otimes 1$.

On the other hand, denote by $i_Y$ the inclusion of $Y_{\mu_n}$ in $Y$, we have
$$\widetilde{i_*}(L[\mathcal{E}.])=\widetilde{i_*}([i_Y^*\mathcal{E}.]\otimes 1)={i_{\mu_n}}_*(\lambda_{-1}(N_{Z/Y}^{(\times)})\cdot[
i_Y^*\mathcal{E}.])\otimes 1.$$ To calculate the complex class in the last term we need the following lemma to decompose $i_{\mu_n}$.
\begin{lem}\label{311}(cf. \cite[6.3.1]{KR})
	The closed immersion $i_{\mu_n}: Y_{\mu_n}\hookrightarrow P_{\mu_n}$ factors through the $\mu_n$-equivariant closed
	subscheme $\mathbb{P}(N_{Z/Y}^{(0)}\oplus \mathcal{O}_{Y_{\mu_n}})$ where $N_{Z/Y}^{(0)}$ is the $0$-degree part of $N_{Z/Y}\mid_{Y_{\mu_n}}$.
\end{lem}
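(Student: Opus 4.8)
The plan is to read the factorization off the weight decomposition of the bundle $E:=N_{Z/Y}\oplus\mathcal{O}_Y$ together with the universal property of the fixed point scheme recorded in Property~\ref{232}; this gives a direct argument for what is quoted from \cite[6.3.1]{KR}. First I would equip $E$ with the equivariant structure used to define the $\mu_n$-action on $P=\mathbb{P}(E)$, where the summand $\mathcal{O}_Y$ carries the trivial (degree $0$) action. Restricting to $Y_{\mu_n}$ and invoking the $\Z/n\Z$-grading of Property~\ref{234}, I would write $E\mid_{Y_{\mu_n}}\simeq\bigoplus_{k}E_k$ with degree-$0$ part $E_0=N_{Z/Y}^{(0)}\oplus\mathcal{O}_{Y_{\mu_n}}$ and with $E_k$ $(k\neq0)$ the nonzero-weight summands of $N_{Z/Y}\mid_{Y_{\mu_n}}$. (One expects $P_{\mu_n}$ to split as the disjoint union of the $\mathbb{P}(E_k)$, but for the factorization only the piece $\mathbb{P}(E_0)$ is needed.)

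Next I would exhibit $\mathbb{P}(E_0)=\mathbb{P}(N_{Z/Y}^{(0)}\oplus\mathcal{O}_{Y_{\mu_n}})$ as a $\mu_n$-equivariant closed subscheme of $P_{\mu_n}$. The projection $E\mid_{Y_{\mu_n}}\twoheadrightarrow E_0$ onto the degree-$0$ summand induces, in the quotient (hyperplane) convention fixed by the tautological sequence $0\rightarrow\mathcal{L}\rightarrow\pi^*E\rightarrow\mathcal{O}_P(1)\rightarrow0$ of \cite[Chapter IV]{FL}, a closed immersion $\mathbb{P}(E_0)\hookrightarrow\mathbb{P}(E\mid_{Y_{\mu_n}})$, which composed with the immersion coming from $Y_{\mu_n}\hookrightarrow Y$ lands inside $P$. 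Since $\mu_n$ acts trivially on the weight-$0$ piece $E_0$, the source $\mathbb{P}(E_0)$ carries the trivial action; hence by Property~\ref{232} this equivariant closed immersion factors uniquely through $P_{\mu_n}$.

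Finally I would check that $i_{\mu_n}$ lands in this subscheme. The zero section $i$ is determined by the equivariant surjection $E\rightarrow\mathcal{O}_Y$ projecting away the $N_{Z/Y}$-factor. Restricting to $Y_{\mu_n}$, equivariance forces this map to annihilate every nonzero-weight summand $E_k$ $(k\neq0)$, since $\mathcal{O}_{Y_{\mu_n}}$ sits in degree $0$; thus it factors as $E\mid_{Y_{\mu_n}}\twoheadrightarrow E_0\twoheadrightarrow\mathcal{O}_{Y_{\mu_n}}$. The second arrow is a rank-one quotient of $E_0$ and so defines a section $Y_{\mu_n}\rightarrow\mathbb{P}(E_0)$ whose composition with the inclusion of the previous step is exactly $i_{\mu_n}$. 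This yields the desired factorization through $\mathbb{P}(N_{Z/Y}^{(0)}\oplus\mathcal{O}_{Y_{\mu_n}})$.

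I expect the main obstacle to be the bookkeeping with the $\mathbb{P}(-)$ convention: one must consistently use the quotient normalization fixed by the tautological sequence so that a surjection $E\twoheadrightarrow E_0$ really produces a closed immersion of projective bundles in the correct direction, and so that the zero-section quotient genuinely factors through $E_0$. A secondary point is to ensure that the factorization obtained lives inside $P_{\mu_n}$ and not merely inside $P$; this is handled by the uniqueness clause in Property~\ref{232}, which identifies the two a priori different factorizations.
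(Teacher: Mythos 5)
Your argument is correct. The paper gives no proof of this lemma at all --- it simply cites \cite[6.3.1]{KR} --- and what you write is precisely the standard argument underlying that reference: decompose $E=N_{Z/Y}\oplus\mathcal{O}_Y$ restricted to $Y_{\mu_n}$ into its $\Z/n\Z$-weight pieces via Property~\ref{234}, use the quotient (Grothendieck) convention fixed by the tautological sequence so that the surjection onto the weight-$0$ part $E_0=N_{Z/Y}^{(0)}\oplus\mathcal{O}_{Y_{\mu_n}}$ yields a closed immersion $\mathbb{P}(E_0)\hookrightarrow P$ with trivial action, hence one landing in $P_{\mu_n}$ by Property~\ref{232}, and observe that the zero-section quotient $E\twoheadrightarrow\mathcal{O}_Y$ must kill the nonzero weights over $Y_{\mu_n}$. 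Your closing remarks correctly flag the only two points needing care (the direction of the induced map of projective bundles, and using that $P_{\mu_n}\hookrightarrow P$ is a monomorphism to identify the two factorizations), so nothing is missing.
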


Denote $\mathbb{P}(N_{Z/Y}^{(0)}\oplus \mathcal{O}_{Y_{\mu_n}})$ by $P_1$. Thanks to this lemma, we may write $i_{\mu_n}$ as $i_1\circ i_0$ where $i_0$ is the closed immersion $Y_{\mu_n}\hookrightarrow P_1$ and $i_1$ stands for the closed immersion $P_1\hookrightarrow P_{\mu_n}$. We first calculate ${i_0}_*(\lambda_{-1}(N_{Z/Y}^{(\times)})\cdot[i_Y^*\mathcal{E}.])=\lambda_{-1}(i_1^*\pi_{\mu_n}^*N_{Z/Y}^{(\times)})\cdot[i_1^*\pi_{\mu_n}^*i_Y^*\mathcal{E}.\otimes {i_0}_*\mathcal{O}_{Y_{\mu_n}}]\in K_{X_{\mu_n}}(P_1,\mu_n)$.

Note that we have the following tautological exact sequence on $P$
$$0\rightarrow \mathcal{L}\rightarrow
\pi^*(N_{Z/Y})\oplus
\mathcal{O}_P\rightarrow\mathcal{O}_P(1)\rightarrow 0,$$ then by restricting to $P_1$ we get a new exact sequence
$$0\rightarrow i_1^*i_p^*\mathcal{L}\rightarrow i_1^*i_p^*\pi^*(N_{Z/Y})\oplus \mathcal{O}_{P_1}\rightarrow\mathcal{O}_{P_1}(1)\rightarrow 0.$$
Taking the non-zero degree part of this exact sequence and using the fact that $i_p^*\pi^*=\pi_{\mu_n}^*i_Y^*$ we obtain an isomorphism $i_1^*\mathcal{L}^{(\times)}\cong i_1^*\pi_{\mu_n}^*(N_{Z/Y}^{(\times)})$. Moreover, the $0$-degree part of the exact sequence above reads that
$\wedge^{\bullet}i_1^*\mathcal{L}^{(0)}$, the pull-back of the $0$-degree part of Koszul complex, gives a resolution of
${i_0}_*\mathcal{O}_{Y_{\mu_n}}$ on $P_1$. So we obtain the following equality
$$\lambda_{-1}(i_1^*\pi_{\mu_n}^*N_{Z/Y}^{(\times)})\cdot[i_1^*\pi_{\mu_n}^*i_Y^*\mathcal{E}.\otimes
{i_0}_*\mathcal{O}_{Y_{\mu_n}}]=[i_1^*i_P^*\pi^*\mathcal{E}.\otimes\wedge^{\bullet}i_1^*i_P^*\mathcal{L}]\in
K_{X_{\mu_n}}(P_1,\mu_n).$$ Since the complex
$\wedge^{\bullet}i_P^*\mathcal{L}$ is acyclic outside $P_1$, we may conclude that
$${i_1}_*[i_1^*i_P^*\pi^*\mathcal{E}.\otimes\wedge^{\bullet}i_1^*i_P^*\mathcal{L}]=[i_P^*\pi^*\mathcal{E}.\otimes\wedge^{\bullet}i_P^*\mathcal{L}]\in K_{X_{\mu_n}}(P_{\mu_n},\mu_n)$$ which completes the proof of Theorem~\ref{301} for the case of zero section embedding.

\subsection{General case}
For general case, let us say $j: Y\hookrightarrow Z$ is a $\mu_n$-equivariant closed immersion between two regular
$\mu_n$-schemes with conormal sheaf $N_{Z/Y}$. We first endow $\mathbb{P}_D^1$ the trivial $\mu_n$-action and consider the
projective line over $Z$:
$$p: \mathbb{P}_Z^1={\rm Proj}(\mathcal{O}_Z[T_0,T_1])\rightarrow Z.$$ Note that $Z={\rm Proj}(\mathcal{O}_Z[T])$, so there are two
canonical sections of $p$ determined by $T_0\rightarrow T, T_1\rightarrow 0$ and $T_0\rightarrow 0, T_1\rightarrow T$ respectively, we denote them by $s_0$ and $s_\infty$.

Let $Y(0)=s_0(j(Y))$ and $Y(\infty)=s_\infty(j(Y))$, we may make the blowing up of $\mathbb{P}_Z^1$ along $Y(\infty)$, that is
$$M={\rm Bl}_{Y(\infty)}(\mathbb{P}_Z^1).$$ It is well known that $P=\mathbb{P}(N_{Z/Y}\oplus \mathcal{O}_Y)$ is the exceptional
divisor for this blowing up, so it determines a closed immersion $g': P\hookrightarrow M$.

Moreover, the embedding of $Y$ in $Z$ induces a closed immersion
of $\mathbb{P}_Y^1$ in $\mathbb{P}_Z^1$. From the composite
\begin{displaymath}\xymatrix{
	Y \ar[r]^-{s_{\infty}} & \mathbb{P}_Y^1 \ar[r] & \mathbb{P}_Z^1}
\end{displaymath}
we get a closed immersion of ${\rm Bl}_{Y(\infty)}(\mathbb{P}_Y^1)$ in ${\rm Bl}_{Y(\infty)}(\mathbb{P}_Z^1)$. Since $Y$ is a Cartier divisor on $\mathbb{P}_Y^1$, ${\rm Bl}_{Y(\infty)}(\mathbb{P}_Y^1)$ is nothing but $\mathbb{P}_Y^1$, so we obtain a closed immersion $$F: \mathbb{P}_Y^1\rightarrow M.$$

Finally, since the blowing down map $\varphi: M\rightarrow \mathbb{P}_Z^1$ is an isomorphism outside $s_\infty(Y)$ and
$s_0(Z)$ is disjoint from $s_\infty(Z)$, the section $s_0$ induces a section $g: Z\rightarrow M.$ Combine the arguments above
together, we have the following deformation diagram which is commutative.
\begin{displaymath}\xymatrix{
	Y \ar[rr]^-{s_0} \ar[d]_{j} && \mathbb{P}_Y^1 \ar[d]^{F} & Y \ar[l]_-{s_\infty} \ar[d]^{i} \\
	Z \ar[rr]^-{g} && M &
	\mathbb{P}(N_{Z/Y}\oplus\mathcal{O}_Y) \ar[l]_-{g'}}
\end{displaymath}

\begin{prop}\label{321}
	The two squares in the deformation diagram are both ${\rm Tor}$-independent.
\end{prop}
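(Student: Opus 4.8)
The plan is to verify Tor-independence of each square by reducing to an explicit local computation on the deformation space $M = {\rm Bl}_{Y(\infty)}(\mathbb{P}^1_Z)$, where both squares become transparent. Recall that a Cartesian square of closed immersions is Tor-independent precisely when the higher Tor sheaves of the two defining ideals (pulled back appropriately) vanish; equivalently, when the fibre product is computed correctly and the intersection is proper in the $K$-theoretic sense. So first I would make each square Cartesian on the nose and identify the relevant intersections inside $M$.

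For the \emph{right-hand square}, the two maps into $M$ are $F: \mathbb{P}^1_Y \to M$ and $g': P = \mathbb{P}(N_{Z/Y}\oplus\mathcal{O}_Y)\hookrightarrow M$, the latter identifying $P$ with the exceptional divisor of the blow-up along $Y(\infty)$. The claim is that these meet exactly along the zero-section copy $i: Y \hookrightarrow P$ (equivalently $s_\infty(Y)\subset \mathbb{P}^1_Y$), with no excess. Here the key structural fact is that $P$ is a Cartier divisor on $M$ (it is an exceptional divisor), so $g'_* \mathcal{O}_P$ has a length-one Koszul resolution $0\to \mathcal{O}_M(-P)\to\mathcal{O}_M\to g'_*\mathcal{O}_P\to 0$. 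I would then compute ${\rm Tor}_q^{\mathcal{O}_M}(\mathcal{O}_{\mathbb{P}^1_Y}, \mathcal{O}_P)$ by tensoring this short resolution with $\mathcal{O}_{\mathbb{P}^1_Y}$: the only possible higher Tor is ${\rm Tor}_1$, which is the kernel of $\mathcal{O}_{\mathbb{P}^1_Y}(-P)\to \mathcal{O}_{\mathbb{P}^1_Y}$, and this vanishes as soon as the local equation of $P$ restricts to a nonzerodivisor on $\mathbb{P}^1_Y$. Since $\mathbb{P}^1_Y$ is not contained in the exceptional divisor (it meets it transversally in the fibre over $\infty$), this is immediate, and the square is Tor-independent.

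For the \emph{left-hand square} — with maps $g: Z\to M$ (induced by $s_0$) and again $F:\mathbb{P}^1_Y\to M$ — the idea is that away from the exceptional locus the blow-down $\varphi: M\to \mathbb{P}^1_Z$ is an isomorphism, and $s_0(Z)$ is \emph{disjoint} from $s_\infty(Z)$, hence disjoint from the entire blown-up region. Thus on a neighborhood of $g(Z)$ the situation is identical to the original square
\begin{displaymath}\xymatrix{
Y \ar[r]^-{s_0} \ar[d]_{j} & \mathbb{P}^1_Y \ar[d] \\
Z \ar[r]^-{s_0} & \mathbb{P}^1_Z}
\end{displaymath}
inside $\mathbb{P}^1_Z$, where $\mathbb{P}^1_Y \to \mathbb{P}^1_Z$ is the pullback of $j:Y\hookrightarrow Z$ along the flat (indeed smooth) projection $p:\mathbb{P}^1_Z\to Z$. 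Flat base change makes this square Tor-independent for free: pulling back $j_*\mathcal{O}_Y$ along a flat morphism is exact, so all higher Tor's vanish. The only thing to check is that passing from $\mathbb{P}^1_Z$ to $M$ does not disturb this, which follows since $\varphi$ is an isomorphism over the open set containing $g(Z)$.

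The \textbf{main obstacle} I anticipate is bookkeeping rather than conceptual: one must confirm that the blow-up $M$ genuinely makes each square Cartesian (so that the set-theoretic intersections I described above are the scheme-theoretic fibre products, with reduced structure as expected), and that the $\mu_n$-equivariant structure is respected throughout — but since all constructions (blow-up along the $\mu_n$-stable center $Y(\infty)$, the sections $s_0, s_\infty$ with $\mathbb{P}^1_D$ carrying the trivial action) are manifestly equivariant, the equivariance adds no real difficulty. The genuinely delicate point is the right-hand square, where one is intersecting with an exceptional divisor; here I would lean on the Cartier-divisor Koszul resolution to reduce Tor-independence to a single nonzerodivisor check, rather than attempting a direct local-ring computation.
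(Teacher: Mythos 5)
Your argument is correct in substance, but it follows a genuinely different route from the paper's. The paper proves both squares at once by a single flatness argument over the parameter line: it takes a locally free resolution $\mathcal{L}.$ of $F_*\mathcal{O}_{\mathbb{P}_Y^1}$ on $M$ and observes that, because $\mathbb{P}_Y^1\rightarrow\mathbb{P}_D^1$ and $M\rightarrow\mathbb{P}_D^1$ are flat, all terms of the exact sequence $0\rightarrow\mathcal{L}.\rightarrow F_*\mathcal{O}_{\mathbb{P}_Y^1}\rightarrow 0$ are flat over $\mathcal{O}_{\mathbb{P}_D^1}$, so restriction to any fibre $M_s$ stays exact (via \cite[Lemma A.4.2]{Fu}); the fibre over $0$ gives the left square directly, and the fibre over $\infty$ gives the right square after discarding the component ${\rm Bl}_Y(Z)$, which does not meet $Y\times\{\infty\}$. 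You instead treat the two squares separately and more explicitly: the right one via the length-one Koszul resolution of the Cartier divisor $P\subset M$ plus a nonzerodivisor check, and the left one by localizing to the open set where $\varphi$ is an isomorphism and using the section-of-a-flat-morphism argument for $s_0$. Both are valid; the paper's version is uniform and never needs to know anything about the local structure of $\mathbb{P}_Y^1$ beyond its flatness over $\mathbb{P}_D^1$, whereas your Koszul step quietly uses that $Y$ (hence $\mathbb{P}_Y^1$) is regular, so that its local rings are domains and ``no component of $\mathbb{P}_Y^1$ lies in $P$'' really does yield a nonzerodivisor --- for a general scheme one would have to rule out associated points inside $P$. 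You also still owe the identification of ${\rm Tor}_0$, i.e.\ that the scheme-theoretic intersections are $s_0(Y)$ and $s_\infty(Y)$ respectively (the squares are Cartesian); you flag this as bookkeeping, and it does go through (the exceptional divisor restricts to the Cartier divisor $Y(\infty)$ on the strict transform $\mathbb{P}_Y^1$), but in the paper's approach this comes out automatically since the restricted complex is shown to resolve $\mathcal{O}_Y$ itself. Your approach buys concreteness at the cost of case analysis and a regularity hypothesis; the paper's buys uniformity and is the one that generalizes to arbitrary deformation-to-the-normal-cone situations.
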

\begin{proof}
	Let $f_1$ be the canonical projection from $\mathbb{P}_Y^1$ to $\mathbb{P}_D^1$, and let $f_2$ be the composition of the blowing down map $\varphi$ and the canonical projection from $\mathbb{P}_Z^1$ to $\mathbb{P}_D^1$. It is well known that $f_1$ and $f_2$ are both flat, this fact plays a crucial role in our proof. Assume that the complex of locally free sheaves $\mathcal{L}.$ provides a resolution of
	$F_*\mathcal{O}_{\mathbb{P}_Y^1}$ on $M$. We shall prove that for any point $s\in \mathbb{P}_D^1$, the restriction of $\mathcal{L}.$ to the fibre over $s$ provides a resolution of $\mathcal{O}_Y$. Our statement of this proposition follows from this result because the other component ${\rm Bl}_Y(Z)$ of the fibre over $\infty$ does not meet $Y\times\{\infty\}$. Let $s$ be any point of $\mathbb{P}_D^1$, we get an immersion $k: {\rm Spec}K(s)\rightarrow \mathbb{P}_D^1$. Denote by $j: M_s\rightarrow M$ the fibre of $f_2$ over $s$, then the structure sheaf $\mathcal{O}_{M_s}$ is isomorphic to
	$f_2^{-1}(K(s))\otimes_{j^{-1}f_2^{-1}\mathcal{O}_{\mathbb{P}_D^1}}j^{-1}\mathcal{O}_M$.
	Since $f_1$ and $f_2$ are both flat, $0\rightarrow j^{-1}\mathcal{L}.\rightarrow
	j^{-1}F_*\mathcal{O}_{\mathbb{P}_Y^1}\rightarrow 0$ is an exact sequence of flat
	$j^{-1}f_2^{-1}\mathcal{O}_{\mathbb{P}_D^1}$-modules. By \cite[Lemma A.4.2]{Fu}, the sequence above is also exact after
	tensoring with the $j^{-1}f_2^{-1}\mathcal{O}_{\mathbb{P}_D^1}$-module
	$f_2^{-1}(K(s))$. This means that the pull-back $j^*\mathcal{L}.$ provides a resolution of $\mathcal{O}_Y$ on $M_s$, so we are done.
\end{proof}
According to this proposition, we get the following commutative diagram of relative $K$-groups (cf. \cite[Section 1.6]{BaFM}).
\begin{displaymath}\xymatrix{
	K_X(Y,\mu_n)  \ar[d]_{j_*} & K_{\mathbb{P}_X^1}(\mathbb{P}_Y^1,\mu_n) \ar[l]_-{s_0^*} \ar[r]^-{s_\infty^*} \ar[d]^{F_*} & K_X(Y,\mu_n)  \ar[d]^{i_*} \\
	K_X(Z,\mu_n)  & K_{\mathbb{P}_X^1}(M,\mu_n)\ar[l]_-{g^*} \ar[r]^-{g'^*}&
	K_X(P,\mu_n) }
\end{displaymath}

On the other hand, the deformation diagram constructed above induces the following commutative diagram $(D)$ of fixed point
schemes.
\begin{displaymath}\xymatrix{
	Y_{\mu_n} \ar[rr]^-{{s_0}_{\mu_n}} \ar[d]_{j_{\mu_n}} && \mathbb{P}_{Y_{\mu_n}}^1 \ar[d]^{F_{\mu_n}} & Y_{\mu_n} \ar[l]_-{{s_\infty}_{\mu_n}} \ar[d]^{i_{\mu_n}} \\
	Z_{\mu_n} \ar[rr]^-{g_{\mu_n}} && M_{\mu_n} &
	{\mathbb{P}(N_{Z/Y}\oplus\mathcal{O}_Y)}_{\mu_n} \ar[l]_-{g'_{\mu_n}}}
\end{displaymath}
\begin{prop}\label{322}
	The two squares in the commutative diagram $(D)$ are both ${\rm Tor}$-independent.
\end{prop}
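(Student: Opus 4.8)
The plan is to deduce Proposition~\ref{322} from Proposition~\ref{321} by exploiting the good behaviour of the fixed point functor under the constructions involved. The first observation I would record is that $(-)_{\mu_n}$ commutes with fibre products: by the universal property of Property~\ref{232}, the fixed point functor is right adjoint to the inclusion of the schemes with trivial $\mu_n$-action into all $\mu_n$-schemes, and right adjoints preserve limits. Hence $(X\times_S Y)_{\mu_n}=X_{\mu_n}\times_{S_{\mu_n}}Y_{\mu_n}$, which is exactly Property~\ref{233} when $S={\rm Spec}(D)$. Applying this to the two Cartesian squares underlying Proposition~\ref{321} shows at once that the two squares of diagram $(D)$ are again Cartesian; moreover, since the action on $\mathbb{P}_D^1$ is trivial, taking fixed points commutes with restriction to any fibre of the structural maps to $\mathbb{P}_D^1$.

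With this in hand I would rerun the argument of Proposition~\ref{321} on the level of fixed point schemes. The projection $(f_1)_{\mu_n}: \mathbb{P}_{Y_{\mu_n}}^1=(\mathbb{P}_Y^1)_{\mu_n}\to\mathbb{P}_D^1$ is flat. For $(f_2)_{\mu_n}: M_{\mu_n}\to\mathbb{P}_D^1$ one notes that $M_{\mu_n}$ is regular by Property~\ref{231} and that its generic fibre along the $\mathbb{P}_D^1$-direction is $Z_{\mu_n}$, so that every component dominates the base and $(f_2)_{\mu_n}$ is flat. Granting this, the sheaf $(F_{\mu_n})_*\mathcal{O}_{\mathbb{P}_{Y_{\mu_n}}^1}$ and a chosen locally free resolution of it are flat over $(f_2)_{\mu_n}^{-1}\mathcal{O}_{\mathbb{P}_D^1}$, and the verbatim repetition of the fibre-wise computation---using \cite[Lemma A.4.2]{Fu} to preserve exactness after tensoring with each residue field $K(s)$---shows that the resolution restricts to a resolution of $\mathcal{O}_{Y_{\mu_n}}$ on every fibre. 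As in Proposition~\ref{321}, this yields the ${\rm Tor}$-independence of both squares.

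The main obstacle is the flatness of $(f_2)_{\mu_n}$, which amounts to controlling the fixed locus of the blow-up $M={\rm Bl}_{Y(\infty)}(\mathbb{P}_Z^1)$; fixed points do not commute with blow-ups in general, so this step relies on the $\mu_n$-invariance of the centre and the regularity of the ambient scheme. I expect the cleanest route is to bypass the flatness question and argue by dimensions: all four schemes in each square are regular by Property~\ref{231}, hence Cohen-Macaulay, so ${\rm Tor}$-independence is equivalent to the intersection being proper. Using $N_{Z_{\mu_n}/Y_{\mu_n}}=N_{Z/Y}^{(0)}\mid_{Y_{\mu_n}}$, which follows from \cite[Lemma 6.7]{KR} together with the vanishing of the $0$-degree parts of $N_{Z/{Z_{\mu_n}}}$ and $N_{Y/{Y_{\mu_n}}}$ exactly as in Proposition~\ref{2313}, one computes that in $M_{\mu_n}$ the subschemes $Z_{\mu_n}$ and $\mathbb{P}_{Y_{\mu_n}}^1$ have codimensions $1$ and ${\rm rk}\,N_{Z/Y}^{(0)}$ respectively, while their intersection $Y_{\mu_n}$ has codimension $1+{\rm rk}\,N_{Z/Y}^{(0)}$; the codimensions add up, so the intersection is proper and the standard criterion for proper intersections of Cohen-Macaulay subschemes in a regular scheme gives the vanishing of the higher ${\rm Tor}$ sheaves. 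The right square is handled identically, using Lemma~\ref{311} to identify the relevant part of $P_{\mu_n}$ as $\mathbb{P}(N_{Z/Y}^{(0)}\oplus\mathcal{O}_{Y_{\mu_n}})$, whose dimension again makes the intersection proper.
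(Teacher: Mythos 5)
Your proposal takes a genuinely different route from the paper, and both of the routes you offer have gaps at exactly the point where the real work lies. The paper's proof is short: by Lemma~\ref{311} (and its analogue for $M$) the closed immersions of $P_1$, $\mathbb{P}^1_{Y_{\mu_n}}$ and $Z_{\mu_n}$ into $M_{\mu_n}$ all factor through the closed subscheme ${\rm Bl}_{Y_{\mu_n}(\infty)}(\mathbb{P}^1_{Z_{\mu_n}})$, which reduces the statement to the ${\rm Tor}$-independence of the deformation diagram for the immersion $Y_{\mu_n}\hookrightarrow Z_{\mu_n}$, i.e.\ to Proposition~\ref{321} applied to the fixed point schemes. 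Your first route instead tries to rerun the flatness argument for $(f_2)_{\mu_n}:M_{\mu_n}\to\mathbb{P}^1_D$ directly, and the justification offered is false: the fixed point scheme of the exceptional divisor $P=\mathbb{P}(N_{Z/Y}\oplus\mathcal{O}_Y)$ decomposes over $Y_{\mu_n}$ into the disjoint pieces $\mathbb{P}((N_{Z/Y}\oplus\mathcal{O}_Y)^{(k)}\mid_{Y_{\mu_n}})$, and for $k\neq 0$ these are (nonempty, in general) connected components of the regular scheme $M_{\mu_n}$ lying entirely in the fibre over $\infty$. So it is simply not true that every component of $M_{\mu_n}$ dominates $\mathbb{P}^1_D$, and $(f_2)_{\mu_n}$ is not flat; a weight computation in the simplest example ($Z=\mathbb{A}^2$ with one coordinate of weight $1$, $Y$ the origin) already produces an isolated fixed point of $M$ sitting over $\infty$. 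Moreover, even for dominating components, ``regular and dominant over a regular base'' gives flatness only when the base is one-dimensional, whereas $D$ is an arbitrary noetherian regular integral ring, so $\mathbb{P}^1_D$ need not be a Dedekind scheme.

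Your second route (Serre's vanishing of higher ${\rm Tor}$ for proper intersections of Cohen--Macaulay subschemes of a regular scheme) is legitimate in principle, but the codimension count presupposes exactly what has to be proved: that locally at the points of $Y_{\mu_n}(0)$ and $Y_{\mu_n}(\infty)$ where the ${\rm Tor}$ sheaves are supported, $M_{\mu_n}$ has dimension $\dim Z_{\mu_n}+1$ and $P_{\mu_n}$ coincides with $P_1=\mathbb{P}(N_{Z/Y}^{(0)}\oplus\mathcal{O}_{Y_{\mu_n}})$. A priori the local dimension of a fixed point scheme is only computed by the weight-zero part of the tangent space of the ambient scheme, so the assertions ``$Z_{\mu_n}$ has codimension $1$ in $M_{\mu_n}$'' and ``$\mathbb{P}^1_{Y_{\mu_n}}$ has codimension ${\rm rk}\,N_{Z/Y}^{(0)}$'' require the local identification of $M_{\mu_n}$ with ${\rm Bl}_{Y_{\mu_n}(\infty)}(\mathbb{P}^1_{Z_{\mu_n}})$ near these loci---which is precisely the content of the factorization step the paper takes from Lemma~\ref{311}. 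Once you have that identification, the Cohen--Macaulay machinery is unnecessary: you can restrict everything to ${\rm Bl}_{Y_{\mu_n}(\infty)}(\mathbb{P}^1_{Z_{\mu_n}})$ and quote Proposition~\ref{321} for $Y_{\mu_n}\hookrightarrow Z_{\mu_n}$, as the paper does.
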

\begin{proof}
	By Lemma~\ref{311}, the closed immersion $i_{\mu_n}$ factors through the closed subscheme $P_1$. The same reason shows that the
	closed immersions from $P_1, \mathbb{P}_{Y_{\mu_n}}^1$ and $Z_{\mu_n}$ to $M_{\mu_n}$ factor through ${\rm
		Bl}_{Y_{\mu_n}(\infty)}\mathbb{P}_{Z_{\mu_n}}^1$. So the ${\rm Tor}$-independence of the commutative diagram $(D)$ is equivalent
	to the ${\rm Tor}$-independence of the deformation diagram on fixed point schemes, then the assertion follows from Proposition~\ref{321}.
\end{proof}

As before, using this proposition, we get a commutative diagram of relative $K$-groups.
\begin{displaymath}\xymatrix{
	K_{X_{\mu_n}}(Y_{\mu_n},\mu_n)_\mathcal{R}  \ar[d]_{\widetilde{j_*}} & K_{\mathbb{P}_{X_{\mu_n}}^1}(\mathbb{P}_{Y_{\mu_n}}^1,\mu_n)_\mathcal{R} \ar[l]_-{{s_0}_{\mu_n}^*} \ar[r]^-{{s_\infty}_{\mu_n}^*} \ar[d]^{\widetilde{F_*}} & K_{X_{\mu_n}}(Y_{\mu_n},\mu_n)_\mathcal{R}  \ar[d]^{\widetilde{i_*}} \\
	K_{X_{\mu_n}}(Z_{\mu_n},\mu_n)_\mathcal{R}  & K_{\mathbb{P}_{X_{\mu_n}}^1}(M_{\mu_n},\mu_n)_\mathcal{R} \ar[l]_-{{g_{\mu_n}}^*} \ar[r]^-{{g'_{\mu_n}}^*}&
	K_{X_{\mu_n}}(P_{\mu_n},\mu_n)_\mathcal{R} }
\end{displaymath}

To complete the whole proof of Theorem~\ref{301}, namely to deform the verification for zero section embedding to general regular
embedding, we still need the following lemma.

\begin{lem}\label{323}
	In the two commutative diagrams of relative $K$-groups we constructed above, $s_0^*$ is surjective and the kernel of
	${g'_{\mu_n}}^*$ is contained in the kernel of ${g_{\mu_n}}^*$.
\end{lem}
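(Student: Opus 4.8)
The plan is to prove the two assertions separately, since they concern the two squares of the deformation diagram independently. For the surjectivity of $s_0^*$, I would exploit the fact that $s_0: Y\hookrightarrow \mathbb{P}_Y^1$ is the zero-section embedding of $Y$ as a Cartier divisor, and that $s_0^*$ is induced by pulling back complexes along this inclusion. The key observation is that the composite $\mathbb{P}_Y^1\stackrel{p}{\to}Y\stackrel{s_0}{\to}\mathbb{P}_Y^1$ satisfies $s_0^*\circ p^*=\mathrm{id}$ on $K_X(Y,\mu_n)$, because $p\circ s_0=\mathrm{id}_Y$. Hence given any class $[\mathcal{E}.]\in K_X(Y,\mu_n)$, its pull-back $p^*[\mathcal{E}.]$ lies in $K_{\mathbb{P}_X^1}(\mathbb{P}_Y^1,\mu_n)$ (since $\mathcal{E}.$ is acyclic outside $X$, its pull-back is acyclic outside $\mathbb{P}_X^1$), and $s_0^*(p^*[\mathcal{E}.])=[\mathcal{E}.]$. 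This exhibits every element as an image, so $s_0^*$ is surjective.

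For the kernel containment, the geometric input is the structure of the blowing down map $\varphi: M\to \mathbb{P}_Z^1$ and the relationship between $g$, $g'$, and the two components of the fibre over $\infty$. Recall that $g: Z\to M$ is induced by the section $s_0$, so $g(Z)$ sits in the fibre over $0\in\mathbb{P}_D^1$, whereas $g': P\hookrightarrow M$ is the exceptional divisor sitting in the fibre over $\infty$. The idea is that an element $\xi\in K_{\mathbb{P}_{X_{\mu_n}}^1}(M_{\mu_n},\mu_n)_{\mathcal{R}}$ with ${g'_{\mu_n}}^*\xi=0$ should also satisfy ${g_{\mu_n}}^*\xi=0$ because the two sections land in fibres over different points of $\mathbb{P}_D^1$, and the pull-back maps to these two fibres agree after composing with the specialisation isomorphism. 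More precisely, both $g_{\mu_n}^*$ and $g'_{\mu_n}^*$ factor through pull-back to a fibre of the flat map $M_{\mu_n}\to\mathbb{P}_D^1$; since $\mathbb{P}_D^1$ has trivial $\mu_n$-action and is connected, the specialisation maps from different fibres into the relative $K$-group coincide. I would use the $\mathrm{Tor}$-independence established in Proposition~\ref{322}, together with the explicit comparison of the two fibre restrictions via the embedding $F:\mathbb{P}_Y^1\to M$, to identify the relevant pull-backs.

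The main obstacle I anticipate is the kernel containment, specifically making rigorous the claim that vanishing on the exceptional component (fibre over $\infty$) forces vanishing on the $s_0$-image (fibre over $0$). The honest mechanism is that $M\to\mathbb{P}_D^1$ is flat (as used in Proposition~\ref{321}), so restriction to any fibre is a well-defined operation on relative $K$-groups, and the restrictions to the fibres over $0$ and over $\infty$ are related by the homotopy invariance along $\mathbb{P}_D^1$. The subtlety is that the fibre over $\infty$ is reducible, consisting of $P$ together with $\mathrm{Bl}_{Y}(Z)$, and $g'$ only captures the $P$-component; I must argue that the $X_{\mu_n}$-support condition confines everything to the component meeting $Y\times\{\infty\}$, which is exactly $P$, just as in the proof of Proposition~\ref{321}. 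I would therefore reduce the containment to a statement about the flat family $M_{\mu_n}\to\mathbb{P}_D^1$, invoke the localisation/deformation machinery of \cite[Section 1.6]{BaFM}, and check that the support hypothesis eliminates the contribution of the second component so that ${g'_{\mu_n}}^*$ genuinely detects the full obstruction measured by ${g_{\mu_n}}^*$.
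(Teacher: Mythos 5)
Your treatment of the first assertion is fine and is exactly the paper's: since $p\circ s_0=\mathrm{id}_Y$, the composite $s_0^*\circ p^*$ is the identity on $K_X(Y,\mu_n)$, so $s_0^*$ is surjective.

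The second assertion is where there is a genuine gap. The pivot of your argument is the claim that ``the specialisation maps from different fibres into the relative $K$-group coincide'' because $\mathbb{P}^1_D$ is connected and carries the trivial action. Connectedness of the base is not a reason: the fibres $M_{\mu_n,0}\cong Z_{\mu_n}$ and $M_{\mu_n,\infty}$ are different schemes, the two pull-backs ${g_{\mu_n}}^*$ and ${g'_{\mu_n}}^*$ land in different relative $K$-groups, and for a general flat family over $\mathbb{P}^1$ there is no identification under which restrictions to distinct fibres agree. Two concrete mechanisms can close this, and you supply neither. (i) The classical Baum--Fulton--Quart route, which is the one your sketch is implicitly aiming at: the fibres over $0$ and $\infty$ are linearly equivalent divisors pulled back from $\mathbb{P}^1_D$, so $[\mathcal{O}_{M_{\mu_n,0}}]=[\mathcal{O}_{M_{\mu_n,\infty}}]$ in $K_0(M_{\mu_n},\mu_n)$ and capping a class $\xi$ with either gives the same element of $K_0'(X_{\mu_n},\mu_n)$; the support condition kills the $\mathrm{Bl}_{Y}(Z)$-component at infinity (as you note); but one must then still invoke the injectivity of the homology map $h\colon K_{X_{\mu_n}}(Z_{\mu_n},\mu_n)\to K_0'(X_{\mu_n},\mu_n)$ (Property~\ref{2210}, using that $Z_{\mu_n}$ is regular and $\mu_n$-projective) to upgrade ``$h\,{g_{\mu_n}}^*\xi=0$'' to ``${g_{\mu_n}}^*\xi=0$'', a step absent from your outline. (ii) The paper's route, which avoids working on $M_{\mu_n}$ altogether: every $\xi$ is of the form $\widetilde{F_*}\eta$ because the modified Thom--Gysin homomorphisms are isomorphisms (Property~\ref{2311} together with Proposition~\ref{2313}); the commutativity ${g'_{\mu_n}}^*\widetilde{F_*}=\widetilde{i_*}\,{s_\infty}_{\mu_n}^*$ and the injectivity of $\widetilde{i_*}$ turn ${g'_{\mu_n}}^*\xi=0$ into ${s_\infty}_{\mu_n}^*\eta=0$; the equality $\ker {s_0}_{\mu_n}^*=\ker {s_\infty}_{\mu_n}^*$ on the \emph{trivial} family $\mathbb{P}^1_{Y_{\mu_n}}$ (preserved under the flat base change to $\mathcal{R}$) then gives ${s_0}_{\mu_n}^*\eta=0$, whence ${g_{\mu_n}}^*\xi=\widetilde{j_*}\,{s_0}_{\mu_n}^*\eta=0$. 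Until you commit to one of these --- in particular, until you identify the actual source of the comparison between the two fibre restrictions and the step recovering vanishing in the relative $K$-group rather than merely in $K_0'$ --- the kernel containment is not established.
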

\begin{proof}
	The first statement is clear since $s_0$ is a section. For the second statement, one should note that the kernel of
	${s_0}_{\mu_n}^*$ is equal to the kernel of ${s_\infty}_{\mu_n}^*$ by construction and this is also true after tensoring with
	$\mathcal{R}$ since $\mathcal{R}$ is flat over $R(\mu_n)$. Then the correctness of the statement follows easily from the fact that
	all modified Thom-Gysin homomorphisms appearing in the diagram are isomorphisms. This fact can be deduced from Property~\ref{2311}
	and Proposition~\ref{2313}.
\end{proof}

Now we consider the map that $L$ induces between the two commutative diagrams of relative $K$-groups we constructed above,
regarding them as vertical with the fixed point scheme diagram behind the original one. The top and the bottom squares of the
resulting double cubic diagram commute because $L$ commutes with pull-backs. The right side face commutes by our arguments in last
section. So we have
$${g'_{\mu_n}}^*(LF_*-\widetilde{F_*}L)=L{g'}^*F_*-{g'_{\mu_n}}^*\widetilde{F_*}L
=Li_*{s_{\infty}}^*-\widetilde{i_*}{s_{\infty}}_{\mu_n}^*L=(Li_*-\widetilde{i_*}L){s_{\infty}}^*=0.$$
By Lemma~\ref{323}, we get ${g_{\mu_n}}^*(LF_*-\widetilde{F_*}L)=0$ and hence
$(Lj_*-\widetilde{j_*}L)s_0^*=0$. Finally the desired equation $Lj_*-\widetilde{j_*}L=0$ follows from the first statement of
Lemma~\ref{323}.

\section{Main theorem and its proof}
\subsection{The independence of the construction of the morphism $L.$}
Let $X$ be a $\mu_n$-equivariant scheme which may not be regular. Moreover, assume that $X$ admits a $\mu_n$-projective regular
envelope $Y$, then by the arguments we made in last section we may define a homomorphism ${L.}^Y$ from $K_0'(X,\mu_n)$ to
$K_0'(X_{\mu_n},\mu_n)_\mathcal{R}$ as the composition of the following three morphisms
\begin{displaymath}\xymatrix{
	K_0'(X,\mu_n) \ar[r]^-{h^{-1}} & K_X(Y,\mu_n) \ar[r]^-{L} &
	K_{X_{\mu_n}}(Y_{\mu_n},\mu_n)_\mathcal{R} \ar[r]^-{\widetilde{h}}
	& K_0'(X_{\mu_n},\mu_n)_\mathcal{R}.}
\end{displaymath}

In this section, we shall prove that actually the homomorphism defined above is independent of the choice of the envelope so that
we can remove $Y$ from the notation and just write ${L.}$. This fact is also the core of our proof of the Lefschetz-Riemann-Roch
theorem in next subsection.
\begin{prop}\label{411}
	Let notations and assumptions be as above. Suppose that $Z$ is another $\mu_n$-projective regular envelope of $X$ such that there exists a $\mu_n$-equivariant closed immersion $j: Y\hookrightarrow Z$, then ${L.}^Y={L.}^Z$.
\end{prop}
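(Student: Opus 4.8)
The plan is to show that the two homomorphisms ${L.}^Y$ and ${L.}^Z$ agree by exploiting the commutative diagram established in Theorem~\ref{301}, which relates the ordinary Thom-Gysin map $j_*$ to the modified Thom-Gysin map $\widetilde{j_*}$ through the pull-back $L$. The key observation is that both ${L.}^Y$ and ${L.}^Z$ are built out of the same ingredients — a homology map $h$ (resp. its inverse), the pull-back $L$, and a modified homology map $\widetilde{h}$ — applied relative to the two different envelopes $Y$ and $Z$. Since we are given a $\mu_n$-equivariant closed immersion $j: Y\hookrightarrow Z$, I can use the compatibility of these maps under $j_*$ and $\widetilde{j_*}$ to pass from one envelope to the other.

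First I would write out the two compositions explicitly. By definition, ${L.}^Y=\widetilde{h}_Y\circ L\circ h_Y^{-1}$ and ${L.}^Z=\widetilde{h}_Z\circ L\circ h_Z^{-1}$, where the subscripts indicate the relevant envelope. The strategy is to insert the Thom-Gysin homomorphisms as intertwiners. On the source side, Proposition~\ref{2212} gives that $h_Z\circ j_*=h_Y$ (the homology map is compatible with the closed immersion $j$, since $j_*$ is just extension by zero), so that $h_Y^{-1}=j_*\circ h_Z^{-1}$ on the nose at the level of relative $K$-groups. On the target side, Proposition~\ref{2313} gives the corresponding statement for the modified maps, namely $\widetilde{h}_Z\circ\widetilde{j_*}=\widetilde{h}_Y$. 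The remaining ingredient that glues these together is precisely Theorem~\ref{301}, which asserts $L\circ j_*=\widetilde{j_*}\circ L$ as maps $K_X(Y,\mu_n)\to K_{X_{\mu_n}}(Z_{\mu_n},\mu_n)_\mathcal{R}$.

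With these three facts in hand the verification is a short diagram chase. Starting from ${L.}^Y=\widetilde{h}_Y\circ L\circ h_Y^{-1}$, I would substitute $\widetilde{h}_Y=\widetilde{h}_Z\circ\widetilde{j_*}$ from Proposition~\ref{2313} and $h_Y^{-1}=j_*\circ h_Z^{-1}$ from Proposition~\ref{2212}, obtaining
\[
{L.}^Y=\widetilde{h}_Z\circ\widetilde{j_*}\circ L\circ j_*\circ h_Z^{-1}.
\]
Then Theorem~\ref{301} lets me rewrite $\widetilde{j_*}\circ L\circ j_*$; more directly, applying $L\circ j_*=\widetilde{j_*}\circ L$ I can collapse the middle, and after using that $\widetilde{j_*}$ composed appropriately reduces via the isomorphism property of the modified Thom-Gysin map (Lemma~\ref{323} shows these are isomorphisms), I arrive at ${L.}^Y=\widetilde{h}_Z\circ L\circ h_Z^{-1}={L.}^Z$. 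The main subtlety, and the step I expect to require the most care, is ensuring that the identifications $h_Y^{-1}=j_*\circ h_Z^{-1}$ and $\widetilde{h}_Y=\widetilde{h}_Z\circ\widetilde{j_*}$ are being used in the correct direction and that the Thom-Gysin maps cancel cleanly rather than merely commute — in particular one must check that the composite $\widetilde{j_*}\circ\widetilde{j_*}^{-1}$-type cancellations are legitimate, which ultimately rests on Theorem~\ref{301} being the exact bridge that makes the two envelope-dependent constructions coincide.
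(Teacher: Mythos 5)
Your overall strategy is exactly the paper's: combine the two ``triangle'' identities coming from Propositions~\ref{2212} and~\ref{2313} (via the isomorphisms of Property~\ref{2210} and Property~\ref{2311}) with the ``square'' identity $L\circ j_*=\widetilde{j_*}\circ L$ of Theorem~\ref{301}. However, as written your diagram chase contains a directional error that breaks the computation. From $h_Z\circ j_*=h_Y$ (Proposition~\ref{2212}) and the invertibility of the homology maps, the correct consequence is $j_*\circ h_Y^{-1}=h_Z^{-1}$, not $h_Y^{-1}=j_*\circ h_Z^{-1}$; the latter does not even typecheck, since $h_Z^{-1}$ lands in $K_X(Z,\mu_n)$ while $j_*$ starts from $K_X(Y,\mu_n)$. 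Substituting your (incorrect) identity into ${L.}^Y=\widetilde{h}_Y\circ L\circ h_Y^{-1}$ produces the composite $\widetilde{h}_Z\circ\widetilde{j_*}\circ L\circ j_*\circ h_Z^{-1}$, which after applying Theorem~\ref{301} contains \emph{two} copies of $\widetilde{j_*}$ and is not equal to ${L.}^Z$; the ``cancellation'' you then invoke has no justification (and Lemma~\ref{323} is about the deformation diagrams of Section~3, not a tool for cancelling Thom--Gysin maps here).

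The fix is to run the chase in the other direction, where no cancellation is needed at all:
\begin{equation*}
{L.}^Z=\widetilde{h}_Z\circ L\circ h_Z^{-1}
=\widetilde{h}_Z\circ L\circ j_*\circ h_Y^{-1}
=\widetilde{h}_Z\circ\widetilde{j_*}\circ L\circ h_Y^{-1}
=\widetilde{h}_Y\circ L\circ h_Y^{-1}={L.}^Y,
\end{equation*}
using in turn $j_*\circ h_Y^{-1}=h_Z^{-1}$, Theorem~\ref{301}, and $\widetilde{h}_Z\circ\widetilde{j_*}=\widetilde{h}_Y$ (Proposition~\ref{2313}, which you did state correctly). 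With this correction your argument coincides with the paper's proof.
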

\begin{proof}
	The statement is equivalent to the commutativity of the following diagram
	\begin{displaymath}\xymatrix{
		& K_X(Y,\mu_n) \ar[r]^-{L} \ar[dd]_{j_*} & K_{X_{\mu_n}}(Y_{\mu_n},\mu_n)_\mathcal{R} \ar[rd]^{\widetilde{h}} \ar[dd]^{\widetilde{j_*}} & \\
		K_0'(X,\mu_n) \ar[ru]^{h^{-1}} \ar[rd]^{h^{-1}} & & & K_0'(X_{\mu_n},\mu_n)_\mathcal{R}. \\
		& K_X(Z,\mu_n) \ar[r]^-{L} & K_{X_{\mu_n}}(Z_{\mu_n},\mu_n)_\mathcal{R} \ar[ru]^{\widetilde{h}} & }
	\end{displaymath}
	
	In fact, these two triangles commute by Property~\ref{2210}, Proposition~\ref{2212} and Proposition~\ref{2313}. The square
	commutes by Theorem~\ref{301}.
\end{proof}

\begin{prop}\label{412}
	Let $X_1\subset Y$ and $X_2\subset P$ be two $\mu_n$-projective regular envelopes with $P$ some projective space, then the
	following diagram commutes:
	\begin{displaymath}\xymatrix{
		K_0'(X_1,\mu_n)\otimes K_0'(X_2,\mu_n) \ar[rr]^-{{L.}^Y\otimes{L.}^P} \ar[d]_{\times} && K_0'({X_1}_{\mu_n},\mu_n)_\mathcal{R}\otimes K_0'({X_2}_{\mu_n},\mu_n)_\mathcal{R} \ar[d]^{\times} \\
		K_0'(X_1\times X_2,\mu_n) \ar[rr]^-{{L.}^{Y\times P}} && K_0'((X_1\times X_2)_{\mu_n},\mu_n)_\mathcal{R}.}
	\end{displaymath}
\end{prop}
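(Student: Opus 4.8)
The plan is to factor each of the three morphisms ${L.}^Y$, ${L.}^P$ and ${L.}^{Y\times P}$ into its three defining steps $\widetilde{h}\circ L\circ h^{-1}$, and then to cut the large square into three horizontal slabs, one for each step, checking separately that $h^{-1}$, $L$ and $\widetilde{h}$ are each compatible with the external product. Before doing this I would record that $Y\times P$ is genuinely a $\mu_n$-projective regular envelope of $X_1\times X_2$, so that ${L.}^{Y\times P}$ is defined and every group in the diagram makes sense: it is $\mu_n$-projective by Corollary~\ref{216}, and it is regular because $P$ is smooth over $D$, whence $Y\times P\to Y$ is smooth and $Y$ is regular. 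Moreover Property~\ref{233} gives the identifications $(X_1\times X_2)_{\mu_n}=(X_1)_{\mu_n}\times (X_2)_{\mu_n}$ and $(Y\times P)_{\mu_n}=Y_{\mu_n}\times P_{\mu_n}$, which are exactly what is needed to match up the external products on both levels.

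For the top slab, the square relating $h^{-1}\otimes h^{-1}$ and $h^{-1}$ to the two external products is obtained from Property~\ref{228} by inverting its vertical arrows. Indeed, Property~\ref{228} asserts that $h$ commutes with the external product, and by Property~\ref{2210} the maps $h\otimes h$ and $h$ are isomorphisms, since $Y$, $P$ and $Y\times P$ are all regular and $\mu_n$-projective. Hence the reverse square, with $h^{-1}\otimes h^{-1}$ and $h^{-1}$, commutes as well.

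For the middle slab, I would use that $L$ is the pull-back along the fixed-point immersion $Y_{\mu_n}\hookrightarrow Y$ (respectively $P_{\mu_n}\hookrightarrow P$) followed by base extension to $\mathcal{R}$. Invoking $(Y\times P)_{\mu_n}=Y_{\mu_n}\times P_{\mu_n}$ from Property~\ref{233}, the external product $[\mathcal{E}_1.]\times[\mathcal{E}_2.]=[\mathcal{E}_1.\boxtimes\mathcal{E}_2.]$ of Remark~\ref{225} is built by pulling back along the two projections and tensoring; since both projections are compatible with the closed immersion $Y_{\mu_n}\times P_{\mu_n}\hookrightarrow Y\times P$, the restriction of $\mathcal{E}_1.\boxtimes\mathcal{E}_2.$ to $(Y\times P)_{\mu_n}$ is canonically the box product of the restrictions of $\mathcal{E}_1.$ and $\mathcal{E}_2.$. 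Base extension is a ring homomorphism and is therefore compatible with $\times$, so this slab commutes. Finally, the bottom slab is precisely Property~\ref{238}, which says that $\widetilde{h}$ is compatible with the external products of the fixed-point relative $K$-groups.

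Pasting the three commuting slabs vertically yields the commutativity of the whole diagram and hence the compatibility of $L.$ with the external product. I expect the only step requiring genuine care to be the middle slab: one must verify that the box product commutes with pull-back to the fixed-point subscheme, which relies on the identification $(Y\times P)_{\mu_n}=Y_{\mu_n}\times P_{\mu_n}$ of Property~\ref{233} and is otherwise a formal consequence of the definition of $\boxtimes$. The analytically substantive input, namely the correction factor $\lambda_{-1}^{-1}$ built into $\widetilde{h}$, is already handled by Property~\ref{238} and need not be re-examined here.
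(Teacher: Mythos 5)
Your proposal is correct and is essentially the paper's own argument spelled out in detail: the paper's one-line proof cites Property~\ref{228} (your top slab, inverted via Property~\ref{2210}), the fact that $L$ commutes with products (your middle slab), and Property~\ref{238} (your bottom slab). The extra care you take in checking that $Y\times P$ is a $\mu_n$-projective regular envelope and in using Property~\ref{233} to identify the fixed point schemes is implicit in the paper but worth making explicit.
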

\begin{proof}
	This comes from Property~\ref{228}, Property~\ref{238} and the fact that $L$ commutes with products.
\end{proof}

\begin{prop}\label{413}
	Let $X$ be a $\mu_n$-scheme which admits a $\mu_n$-projective
	regular envelope $Y$. Suppose that $f: X'\hookrightarrow X$ is a
	$\mu_n$-equivariant closed immersion, then the following diagram
	commutes:
	\begin{displaymath}\xymatrix{
		K_0'(X',\mu_n) \ar[r]^-{{L.}^Y} \ar[d]_{f_*} & K_0'(X'_{\mu_n},\mu_n)_\mathcal{R} \ar[d]^{{f_{\mu_n}}_*} \\
		K_0'(X,\mu_n) \ar[r]^-{{L.}^Y} & K_0'(X_{\mu_n},\mu_n)_\mathcal{R}.}
	\end{displaymath}
\end{prop}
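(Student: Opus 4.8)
The plan is to interpolate a three-stage ladder between the two rows of the square, using the relative groups $K_{\bullet}(Y,\mu_n)$ and $K_{\bullet_{\mu_n}}(Y_{\mu_n},\mu_n)_\mathcal{R}$ as the intermediate columns, and then to observe that each panel of the ladder is either one of the earlier properties or immediate from the definition of $L$. The first point I would record is that, because $f$ is a closed immersion, the composite $X'\hookrightarrow X\hookrightarrow Y$ realises $Y$ as a $\mu_n$-projective regular envelope of $X'$ as well, with $Y_{\mu_n}$ the induced envelope of $X'_{\mu_n}$ (its regularity being Property~\ref{231}). Consequently ${L.}^Y$ is defined on $K_0'(X',\mu_n)$ through the \emph{same} envelope $Y$, and factors as $\widetilde{h}\circ L\circ h^{-1}$ with all three maps attached to $Y$; likewise for $X$.

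Next I would arrange the two factorisations as horizontal rows, the top one being
\[
K_0'(X',\mu_n)\xrightarrow{h^{-1}}K_{X'}(Y,\mu_n)\xrightarrow{L}K_{X'_{\mu_n}}(Y_{\mu_n},\mu_n)_\mathcal{R}\xrightarrow{\widetilde{h}}K_0'(X'_{\mu_n},\mu_n)_\mathcal{R},
\]
the bottom one its analogue for $X$, and join them by four vertical maps: $f_*$ on the far left, the maps $K_{X'}(Y,\mu_n)\to K_X(Y,\mu_n)$ and $K_{X'_{\mu_n}}(Y_{\mu_n},\mu_n)_\mathcal{R}\to K_{X_{\mu_n}}(Y_{\mu_n},\mu_n)_\mathcal{R}$ induced by the identity on complexes, and ${f_{\mu_n}}_*$ on the far right. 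It then suffices to prove that the three squares commute, since the horizontal composites are exactly the two copies of ${L.}^Y$ and the outer verticals are $f_*$ and ${f_{\mu_n}}_*$. The left square is Property~\ref{229} after inverting $h$, which is legitimate by Property~\ref{2210} because $Y$ is regular and $\mu_n$-projective; here one uses that the ``inclusion-induced'' map of Property~\ref{229} coincides with $f_*$ under the identification of $K_0'$ with Grothendieck groups of sheaves supported on the subscheme (Remark~\ref{226}). The right square is Property~\ref{2310} verbatim, its right vertical being ${f_{\mu_n}}_*$.

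The middle square is the panel I would treat most explicitly. Both of its vertical maps merely enlarge the acyclicity locus (from $X'$ to $X$, respectively from $X'_{\mu_n}$ to $X_{\mu_n}$) without touching the underlying complexes, whereas $L$ is restriction along $Y_{\mu_n}\hookrightarrow Y$ followed by base change. Since restricting a complex acyclic outside $X'$ produces one acyclic outside $X'\cap Y_{\mu_n}=X'_{\mu_n}$ (the equality following from the universal property, Property~\ref{232}), and restriction visibly commutes with the identity-induced enlargement of supports, this square commutes on the nose. Composing the three squares horizontally then yields the claim. The only step demanding genuine care is the bookkeeping of the previous paragraph---checking that the anonymous inclusion-induced maps really are $f_*$ and ${f_{\mu_n}}_*$, and that $X'\cap Y_{\mu_n}=X'_{\mu_n}$ so that the middle column is well-defined---after which every panel is either quoted directly or transparent.
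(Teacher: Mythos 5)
Your proof takes essentially the same route as the paper's: the paper likewise factors the square as the outer boundary of a three-panel ladder through $K_{X'}(Y,\mu_n)$ and $K_{X'_{\mu_n}}(Y_{\mu_n},\mu_n)_\mathcal{R}$, invokes Property~\ref{229} and Property~\ref{2310} for the outer panels, and notes that the middle panel is naturally commutative. Your additional bookkeeping (identifying the inclusion-induced verticals with $f_*$ and ${f_{\mu_n}}_*$ via Remark~\ref{226}, and checking $X'\cap Y_{\mu_n}=X'_{\mu_n}$ via Property~\ref{232}) is correct and only makes explicit what the paper leaves implicit.
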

\begin{proof}
	This diagram is the outside of the following diagram
	\begin{displaymath}\xymatrix{
		K_0'(X',\mu_n) \ar[r]^-{h^{-1}} \ar[d]_{f_*} & K_{X'}(Y,\mu_n)
		\ar[r]^-{L} \ar[d] & K_{X'_{\mu_n}}(Y_{\mu_n},\mu_n)_\mathcal{R}
		\ar[r]^-{\widetilde{h}} \ar[d] &
		K_0'(X'_{\mu_n},\mu_n)_\mathcal{R} \ar[d]^{{f_{\mu_n}}_*} \\
		K_0'(X,\mu_n) \ar[r]^-{h^{-1}} & K_X(Y,\mu_n) \ar[r]^-{L} &
		K_{X_{\mu_n}}(Y_{\mu_n},\mu_n)_\mathcal{R} \ar[r]^-{\widetilde{h}}
		& K_0'(X_{\mu_n},\mu_n)_\mathcal{R}.}
	\end{displaymath}
	The middle square is naturally commutative. The commutativity of the first and the third squares comes from Property~\ref{229}, Property~\ref{2310} and the fact that the equivariant $K$-groups are generated by $f$-acyclic elements.
\end{proof}
Before we can give two more propositions, we need a technical lemma as follows.
\begin{lem}\label{414}
	Let $X$ be a $\mu_n$-scheme, and let $P$ be some $\mu_n$-projective space endowed with a global $\mu_n$-action.
	Then the external product map
	\begin{displaymath}\xymatrix{
		K_0'(X,\mu_n)\otimes K_0'(P,\mu_n) \ar[r]^-{\times} & K_0'(X\times
		P,\mu_n)}
	\end{displaymath}
	is an isomorphism.
\end{lem}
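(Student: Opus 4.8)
The goal is to show that the external product $\times\colon K_0'(X,\mu_n)\otimes K_0'(P,\mu_n)\to K_0'(X\times P,\mu_n)$ is an isomorphism when $P$ is a projective space with global $\mu_n$-action. The natural strategy is to reduce to the structure of $K$-theory of projective space by a projective-bundle–type argument. Since $P=\mathbb{P}_D^r$ carries the canonically equivariant twisting sheaves $\mathcal{O}(m)$ (by the discussion of global actions in the introduction and Lemma~\ref{214}), I expect $K_0'(P,\mu_n)$ to be a free $R(\mu_n)$-module on the classes $[\mathcal{O}(-m)]$ for $0\le m\le r$, exactly as in the non-equivariant projective bundle formula but now tracking the $\mu_n$-comodule structure. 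The plan is to build an explicit two-sided inverse to $\times$ rather than to argue abstractly.

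First I would fix equivariant generators of $K_0'(P,\mu_n)$ over $R(\mu_n)$ coming from the equivariant Koszul/Beilinson-type resolution on $P=\mathbb{P}_D^r$; the relevant input is that every $\mu_n$-equivariant coherent sheaf on $P$ admits a finite equivariant locally free resolution (Proposition~\ref{212}, since $P$ is $\mu_n$-projective and regular). Pulling back along the two projections $pr_X\colon X\times P\to X$ and $pr_P\colon X\times P\to P$ and using the cup/cap module structure from Remark~\ref{226}, the external product is $a\times b = pr_X^*a \cdot pr_P^*b$. To invert it, I would define a candidate inverse on $K_0'(X\times P,\mu_n)$ by pushing forward along $pr_X$ after twisting by the appropriate equivariant line bundles: concretely, send a class $c$ to $\sum_{m} ({pr_X}_*(c\cdot pr_P^*[\mathcal{O}(m)])) \otimes e_m$, where $e_m$ are the dual equivariant generators on $P$. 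This is the equivariant analogue of the familiar decomposition of $K$-theory of a projective bundle, and the fact that $P$ is genuinely a trivial (base-changed) projective space over $D$ means the fibers are all standard projective spaces, so the generators are globally defined.

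The verification then splits into two checks. That the composite in one direction is the identity on $K_0'(X,\mu_n)\otimes K_0'(P,\mu_n)$ reduces, via the projection formula and flat base change for ${pr_X}_*$, to the cohomology computation $H^\bullet(P,\mathcal{O}(m)\otimes\mathcal{O}(-m'))$ carried out equivariantly; the diagonal entries give $R(\mu_n)$ and the off-diagonal terms vanish, yielding an upper-triangular invertible change of basis. That the composite in the other direction is the identity on $K_0'(X\times P,\mu_n)$ is the substantive point: it asserts that the classes $pr_P^*[\mathcal{O}(-m)]$ generate $K_0'(X\times P,\mu_n)$ as a $K_0'(X,\mu_n)$-module, which I would prove by the standard resolution-of-the-diagonal argument on $P\times_D P$, pulled back to $X\times P$ and made equivariant using that all the Beilinson resolution terms are $\mu_n$-equivariant under the global action.

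The hard part will be the last step, namely establishing the equivariant resolution of the diagonal (or an equivalent generation statement) in a form that respects both the $\mu_n$-comodule grading and the non-regularity of $X$. Over a field this is classical, but here the base $D$ is only a regular integral ring and $X$ is an arbitrary $\mu_n$-scheme that need not be regular or projective, so I cannot invoke finiteness of $K$-theory on $X$ directly; I must instead keep all arguments on the $P$-factor, where regularity and global equivariant line bundles are available, and use flatness of $pr_P$ together with the base-change compatibility of the cap-product. Tracking the equivariant structure through the Künneth-type identification — ensuring the comodule decomposition of $\mathcal{O}(m)|_P$ matches the one induced on $X\times P$ — is where the care is required, but no genuinely new geometric input beyond Proposition~\ref{212} and the projection formula should be needed.
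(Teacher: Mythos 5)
Your plan is correct, but it takes a genuinely different route from the paper. The paper's proof leans on Quillen's theory of regular coherent sheaves from \cite[Section 8]{Qu1}: every equivariant coherent sheaf on $X\times P$ reduces to a regular one, and a regular sheaf carries Quillen's canonical resolution with terms $T_k(\mathcal{F})\boxtimes\mathcal{O}_P(-k)$; since the exact functors $T_k$ and the twists $\mathcal{O}_P(-k)$ are equivariant for a global action, this exhibits every class as an external product and proves surjectivity, while injectivity is simply cited from \cite[Theorem 2.1]{Qu1} and \cite[Theorem 3.1]{Th1}. You instead run a Fourier--Mukai/Beilinson argument: resolve the diagonal of $P\times_D P$ by the Koszul complex of the canonical section (automatically equivariant for a linear, i.e.\ global, action), convolve to write any class $c$ of $K_0'(X\times P,\mu_n)$ in the form $\sum_i(-1)^i\,{pr_X}_*\bigl(c\otimes pr_P^*\Omega^i(i)\bigr)\times[\mathcal{O}_P(-i)]$, and check via the (equivariant) Bott-type computation $\chi\bigl(P,\mathcal{O}(-j)\otimes\Omega^i(i)\bigr)=\delta_{ij}$ in $R(\mu_n)$ --- or, in your triangular variant with the twists $\mathcal{O}(m)$, a unipotent change of basis --- that this yields an explicit two-sided inverse. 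Your route buys a self-contained proof at the level of $K_0$, including an honest proof of injectivity, at the cost of verifying the equivariant resolution of the diagonal and the flat-base-change and projection-formula bookkeeping on $X\times P\times P$; the paper's route buys brevity and compatibility with higher $K$-theory by outsourcing the hard steps to Quillen and Thomason (indeed it only writes out surjectivity, which is all it later uses). Both approaches correctly avoid any regularity or projectivity hypothesis on $X$, since in each case the resolution lives entirely on the $P$-factor(s) and only locally free sheaves are ever tensored against classes supported on $X\times P$.
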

\begin{proof}
	A simple proof of this statement can be obtained by following \cite[Section 8]{Qu1}. Denote by $S$ the external product map, we give an explicit proof of the surjectivity of $S$ since we only need this fact in this paper. Actually, Quillen has shown that $K_0'(X\times P,\mu_n)$ is generated by equivariant regular coherent sheaves on $X\times P$ and every regular sheaf
	$\mathcal{F}$ admits a canonical resolution
	$$0\rightarrow T_N(\mathcal{F})\boxtimes\mathcal{O}_P(-N)\rightarrow\cdots\rightarrow T_0(\mathcal{F})\boxtimes\mathcal{O}_P\rightarrow \mathcal{F}\rightarrow 0$$ where $T_k(\mathcal{F})$ are coherent
	sheaves on $X$. The construction of these exact functors $T_k$ are equivariant so that if $\mathcal{F}$ is equivariant then all
	$T_k(\mathcal{F})$ are equivariant. Note that we have assumed that $P$ admits a global $\mu_n$-action so that all $\mathcal{O}_P(-k)$
	are equivariant, therefore for any equivariant regular coherent sheaf $\mathcal{F}$ on $X\times P$ the canonical resolution above also fits the equivariant setting. Hence the canonical resolution implies that for any equivariant regular coherent sheaf $\mathcal{F}$ on $X\times P$ we have
	$$\mathcal{F}=S(\sum_{k=0}^N(-1)^kT_k(\mathcal{F})\otimes
	O_P(-k)).$$ This means $S$ is surjective. For the injectivity, one can follow the same argument (with a little change) as in \cite[Theorem 2.1]{Qu1}. This argument was also formulated in \cite[Theorem 3.1]{Th1}.
\end{proof}
\begin{prop}\label{415}
	Let $X$ be a $\mu_n$-scheme which admits a $\mu_n$-projective regular envelope $Y$, and let $P$ be some projective space endowed with a global $\mu_n$-action. Denote by $p: X\times P\rightarrow X$ the canonical projection and assume that the diagram
	\begin{displaymath}\xymatrix{
		K_0'(P,\mu_n) \ar[r]^-{{L.}^P} \ar[d]_{q_*} & K_0'(P_{\mu_n},\mu_n)_\mathcal{R} \ar[d]^{{q_{\mu_n}}_*} \\
		K_0'(D,\mu_n) \ar[r]^-{{L.}^D} & K_0'(D,\mu_n)_\mathcal{R}}
	\end{displaymath}
	commutes, then we have the following commutative diagram
	\begin{displaymath}\xymatrix{
		K_0'(X\times P,\mu_n) \ar[r]^-{{L.}^{Y\times P}} \ar[d]_{p_*} & K_0'(X_{\mu_n}\times P_{\mu_n},\mu_n)_\mathcal{R} \ar[d]^{{p_{\mu_n}}_*} \\
		K_0'(X,\mu_n) \ar[r]^-{{L.}^Y} & K_0'(X_{\mu_n},\mu_n)_\mathcal{R}.}
	\end{displaymath}
	This property is called the base change invariance.
\end{prop}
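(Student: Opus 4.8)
The plan is to reduce the assertion to decomposable external products and then play off the multiplicativity of ${L.}$ against the assumed commutativity over the base. By Lemma~\ref{414} the external product map $K_0'(X,\mu_n)\otimes K_0'(P,\mu_n)\rightarrow K_0'(X\times P,\mu_n)$ is surjective, and every arrow in the square is additive, so it suffices to check commutativity on an element of the form $\alpha\times\beta$ with $\alpha\in K_0'(X,\mu_n)$ and $\beta\in K_0'(P,\mu_n)$. Since $P$ is regular we may represent $\beta$ by $\mu_n$-equivariant locally free classes, so that $\alpha\times\beta=\mathrm{pr}_X^*\alpha\otimes\mathrm{pr}_P^*\beta$ is genuinely well defined.

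First I would evaluate the lower-left composite ${L.}^Y\circ p_*$. Write $\pi_X\colon X\rightarrow D$ and $q\colon P\rightarrow D$ for the structure morphisms; then $p$ is the base change of $q$ along $\pi_X$, and since $q$ is flat the resulting cartesian square is ${\rm Tor}$-independent. As $p=\mathrm{pr}_X$, the equivariant projection formula combined with base change yields
$$p_*(\alpha\times\beta)=\alpha\cdot\pi_X^*(q_*\beta)\in K_0'(X,\mu_n).$$
Put $\gamma:=q_*\beta\in K_0'(D,\mu_n)$. The decisive step is to recognize the module product $\alpha\cdot\pi_X^*\gamma$ as an external product over the base: regarding $D=\mathbb{P}_D^0$ as a projective space with (trivial) global $\mu_n$-action and using the identifications $X\times D=X$, $Y\times D=Y$ and $X_{\mu_n}\times D=X_{\mu_n}$, Proposition~\ref{412} applied to the envelopes $X\subset Y$ and $D\subset D$ gives
$$ {L.}^Y(\alpha\cdot\pi_X^*\gamma)={L.}^Y(\alpha)\cdot\pi_{X_{\mu_n}}^*\bigl({L.}^D(\gamma)\bigr). $$
Invoking the hypothesis that the $P\rightarrow D$ square commutes, namely ${L.}^D(q_*\beta)={q_{\mu_n}}_*({L.}^P\beta)$, the lower-left composite becomes ${L.}^Y(\alpha)\cdot\pi_{X_{\mu_n}}^*\bigl({q_{\mu_n}}_*({L.}^P\beta)\bigr)$.

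Next I would evaluate the upper-right composite ${p_{\mu_n}}_*\circ{L.}^{Y\times P}$. By Proposition~\ref{412} we have ${L.}^{Y\times P}(\alpha\times\beta)={L.}^Y(\alpha)\times{L.}^P(\beta)$, and pushing this external product forward along $p_{\mu_n}\colon X_{\mu_n}\times P_{\mu_n}\rightarrow X_{\mu_n}$ by the same projection-formula-and-base-change argument, now using that $P_{\mu_n}$ is a disjoint union of projective spaces and hence flat over $D$, produces exactly ${L.}^Y(\alpha)\cdot\pi_{X_{\mu_n}}^*\bigl({q_{\mu_n}}_*({L.}^P\beta)\bigr)$. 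This agrees with the expression found for the other composite, so the square commutes on $\alpha\times\beta$ and therefore everywhere.

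The principal obstacle is the middle paragraph: one must check carefully that the equivariant projection formula and the ${\rm Tor}$-independent base change are valid at the level of the groups $K_0'(-,\mu_n)$, and in particular must justify rewriting $\alpha\cdot\pi_X^*\gamma$ as the external product $\alpha\times\gamma$ over $D=\mathbb{P}_D^0$ so that Proposition~\ref{412} becomes applicable. Once this bookkeeping is settled the two composites collapse to a single expression, and the assumed commutativity over the base closes the argument.
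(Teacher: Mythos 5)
Your argument is correct and follows essentially the same route as the paper's: both reduce to the image of the external product via the surjectivity in Lemma~\ref{414}, use Proposition~\ref{412} to commute $L.$ past external products, and combine the compatibility of $\times$ with proper push-forward with the assumed commutativity over $D$. The paper packages this as a diagram chase around a cube whose front face is the hypothesis square tensored with $K_0'(X,\mu_n)$, while you carry out the same chase element-wise on classes $\alpha\times\beta$; the ingredients are identical.
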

\begin{proof}
	By our assumption, we may consider the following commutative
	diagram
	\begin{displaymath}\xymatrix{
		K_0'(X,\mu_n)\otimes K_0'(P,\mu_n) \ar[rr]^-{{L.}^Y\otimes{L.}^P} \ar[d]_{1\otimes q_*} && K_0'(X_{\mu_n},\mu_n)_\mathcal{R}\otimes K_0'(P_{\mu_n},\mu_n)_\mathcal{R} \ar[d]^{1\otimes {q_{\mu_n}}_*} \\
		K_0'(X,\mu_n)\otimes K_0'(D,\mu_n) \ar[rr]^-{{L.}^Y\otimes{L.}^D} && K_0'(X_{\mu_n},\mu_n)_\mathcal{R}\otimes K_0'(D,\mu_n)_\mathcal{R}.}
	\end{displaymath}
	The external product maps this square to the desired one. These two sides of the resulting cubic diagram commute by the fact that external product is compatible with push-forwards of proper maps. The top and the bottom squares commute by Proposition~\ref{412}. Then the desired commutativity of the back square follows from Lemma~\ref{414}
\end{proof}
The following result shows that the assumption in Proposition~\ref{415} is always true.
\begin{prop}\label{416}
	Let $q: P\rightarrow D$ be the structure morphism of some projective space which is endowed with a global $\mu_n$-action.
	Then the following diagram commutes:
	\begin{displaymath}\xymatrix{
		K_0'(P,\mu_n) \ar[r]^-{{L.}^P} \ar[d]_{q_*} & K_0'(P_{\mu_n},\mu_n)_\mathcal{R} \ar[d]^{{q_{\mu_n}}_*} \\
		K_0'(D,\mu_n) \ar[r]^-{{L.}^D} & K_0'(D,\mu_n)_\mathcal{R}.}
	\end{displaymath}
\end{prop}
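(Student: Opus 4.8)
The plan is to verify the commutativity of the Lefschetz-Riemann-Roch diagram for the structure morphism $q: P\rightarrow D$ of a projective space with global $\mu_n$-action by reducing the computation to explicit generators of $K_0'(P,\mu_n)$ and comparing both composites directly. Since $P$ is itself regular and $\mu_n$-projective, we may take $P$ as its own regular envelope, so that ${L.}^P$ is computed by the homology map $h$ being an isomorphism (Property~\ref{2210}), the pull-back $L$ to the fixed point scheme $P_{\mu_n}$, and the modified homology map $\widetilde{h}$ (Property~\ref{2311}). Concretely, for a $\mu_n$-equivariant locally free sheaf, ${L.}^P$ sends its class to $i_{P_{\mu_n}}^*\lambda_{-1}^{-1}(N_{P/P_{\mu_n}})\cdot i_{P_{\mu_n}}^*[\mathcal F]\otimes 1$. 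The first thing I would do is pin down a set of generators: because $P$ carries a global action, the twisting sheaves $\mathcal O_P(k)$ are $\mu_n$-equivariant invertible sheaves, and by the equivariant version of Quillen's computation (the same canonical resolution machinery invoked in Lemma~\ref{414}) the group $K_0'(P,\mu_n)$ is generated by classes of the form $q^*\mathcal G\otimes\mathcal O_P(k)$ with $\mathcal G$ on $D$.

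Next I would reduce to these generators using the projection formula. For a generator $q^*\mathcal G\otimes\mathcal O_P(k)$, one has $q_*(q^*\mathcal G\otimes\mathcal O_P(k))=\mathcal G\otimes q_*\mathcal O_P(k)$, and on the fixed-point side the analogous identity holds for $q_{\mu_n}: P_{\mu_n}\rightarrow D$. The heart of the matter is therefore a Riemann-Roch type identity comparing $q_*$ applied after the Lefschetz localization with $q_{\mu_n}{}_*$ applied after it; this is exactly a self-intersection computation on the projective space with its global action. I would decompose $P_{\mu_n}$ according to the $\Z/n\Z$-grading of the free sheaf defining the action (Property~\ref{234}): a global action on $\mathcal O_D^{r+1}$ splits the module into weight eigenspaces $M=\bigoplus_{j} M_j$, the fixed locus $P_{\mu_n}$ is the disjoint union of the sub-projective-spaces $\mathbb P(M_j)$, and the normal sheaf $N_{P/P_{\mu_n}}$ decomposes into nonzero-weight pieces whose $\lambda_{-1}$ is computed from the characters. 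The key computation is then a weighted Euler-characteristic identity: $q_{\mu_n}{}_*\bigl(i_{P_{\mu_n}}^*\lambda_{-1}^{-1}(N)\cdot i_{P_{\mu_n}}^*[\mathcal O_P(k)]\bigr)$ must equal ${L.}^D$ applied to $q_*[\mathcal O_P(k)]$, which is simply the pushforward computed by cohomology of twisting sheaves on each fixed-point component, weighted by the inverse Euler classes.

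I expect the main obstacle to be precisely this weighted-sum identity on the fixed locus — making the two expressions for the pushforward of $\lambda_{-1}^{-1}(N_{P/P_{\mu_n}})$ times the restricted twisting sheaf agree after base change to $\mathcal R$. This is where invertibility of $1-T^k$ for $k=1,\dots,n-1$ in $\mathcal R$ (equivalently, invertibility of $\lambda_{-1}(N_{P/P_{\mu_n}})$ guaranteed by Proposition~\ref{236}) is indispensable, because the identity amounts to the classical statement that summing the local fixed-point contributions reproduces the equivariant Euler characteristic, the very essence of a Lefschetz fixed point formula for projective space. Rather than grinding through the character bookkeeping, I would organize it as follows: first establish the identity for the base case $D$ trivially (where ${L.}^D$ is essentially the identity on the zero-weight part), then verify it for each rank-one twisting sheaf $\mathcal O_P(k)$ by the projective-bundle structure and the decomposition of $P_{\mu_n}$, and finally extend to all generators $q^*\mathcal G\otimes\mathcal O_P(k)$ by $R(\mu_n)$-linearity and the projection formula. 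The surjectivity onto generators from Lemma~\ref{414} then closes the argument, since once the two composites agree on a generating set they agree everywhere.
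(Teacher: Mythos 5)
Your route is genuinely different from the paper's. The paper does not touch generators or characters at all: it invokes the base change invariance (Proposition~\ref{415}) to reduce the statement from a general base $D$ to $D=\Z$, observes that over a PID every locally free sheaf is free so that $K_0(\Z)$ may be identified with $K_0(\C)$, uses Lemma~\ref{414} (together with the fact that $(\mathbb{P}_\Z^r)_{\mu_n}$ is a disjoint union of projective spaces with global actions) to see that the base change maps on $K_0'(\mathbb{P}^r,\mu_n)$, on $K_0'$ of the fixed point scheme, and on $K_0'$ of the base are all isomorphisms, and then reduces to the case over $\C$, which is a variant of the main theorem of \cite{BFQ}. In other words, the paper outsources the entire fixed-point computation on projective space to Baum--Fulton--Quart after a clean base-change reduction. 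You instead propose to prove the formula directly: generate $K_0'(P,\mu_n)$ by $q^*\mathcal{G}\otimes\mathcal{O}_P(k)$ via the equivariant canonical resolution, decompose $P_{\mu_n}=\coprod_j\mathbb{P}(M_j)$ by weight eigenspaces, and verify a weighted Euler-characteristic identity component by component. Your approach, if completed, would make the proposition self-contained and would in effect reprove the projective-space case of \cite{BFQ}; the paper's approach is shorter but leans on an external result and on the slightly delicate identification $K_0(\Z)\cong K_0(\C)$.

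The caveat is that you have not actually carried out the one step that contains all the mathematical content. The identity
$${q_{\mu_n}}_*\bigl(\lambda_{-1}^{-1}(N_{P/P_{\mu_n}})\cdot \mathcal{O}_P(k)\mid_{P_{\mu_n}}\bigr)=q_*\mathcal{O}_P(k)\otimes 1$$
is precisely the Lefschetz fixed point formula for $\mathbb{P}^r$ with a diagonalisable action, and asserting that it is ``the classical statement'' is not a proof: summing the contributions $\sum_j\chi(\mathbb{P}(M_j),\mathcal{O}(k))$ weighted by the inverse $\lambda_{-1}$ classes of the normal bundles and recovering the full equivariant Euler characteristic requires a genuine character computation (a partial-fraction identity in $R(\mu_n)\otimes\mathcal{R}$, of the kind carried out in \cite{BFQ} or in \cite[Section 6]{KR}). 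If you intend this as a complete argument you must either perform that computation or cite it precisely; as written, the crux is replaced by an appeal to its own conclusion. Also, a small slip: since $D$ carries the trivial action, $D_{\mu_n}=D$ and ${L.}^D$ is simply the base extension $x\mapsto x\otimes 1$, not a projection onto the zero-weight part.
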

\begin{proof}
	By the base change invariance i.e. Proposition~\ref{415}, we only have to show that the statement is correct when $D$ is equal to $\Z$. In fact, since $\Z$ is a ${\rm PID}$, every locally free sheaf on $\Z$ is free, then $K_0(\Z)$ can be identified with $K_0(\C)$. So Lemma~\ref{414} tells us that the base change maps $K_0'(\mathbb{P}_\Z^r,\mu_n)\rightarrow
	K_0'(\mathbb{P}_\C^r,\mu_n)$ and $K_0'(\Z,\mu_n)\rightarrow K_0'(\C,\mu_n)$ are isomorphisms. Moreover, note that the fixed
	point scheme of $\mathbb{P}_\Z^r$ is the disjoint union of some projective spaces endowed with global $\mu_n$-actions (cf.
	\cite[6.3.1]{KR}), then by using Lemma~\ref{414} again we know that the base change map $K_0'({\mathbb{P}_\Z^r}_{\mu_n},\mu_n)\rightarrow K_0'({\mathbb{P}_\C^r}_{\mu_n},\mu_n)$ is also isomorphism.
	Therefore we are reduce to prove that the diagram is commutative when $\Z$ is replaced by $\C$, but this is a slight variant of the main result of \cite{BFQ}, so we are done.
\end{proof}
\begin{thm}\label{417}
	Let notations and assumptions be as at the beginning of this section, then the homomorphism $L.^Y$ we constructed there is
	independent of the choice of the envelope.
\end{thm}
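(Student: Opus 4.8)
The plan is to deduce the statement from Proposition~\ref{411}, which already settles the case in which one envelope admits a $\mu_n$-equivariant closed immersion into the other. The obstacle is that for two arbitrary $\mu_n$-projective regular envelopes $j_1\colon X\hookrightarrow Y_1$ and $j_2\colon X\hookrightarrow Y_2$ there is in general no $\mu_n$-equivariant closed immersion between $Y_1$ and $Y_2$ compatible with the embeddings of $X$ (producing such a map as a section of a projection would amount to extending a morphism $X\to P_2$ over all of $Y_1$, which cannot be done in general). To circumvent this I would route the comparison through products with projective spaces, where the needed closed immersions do exist, and then show separately that a projective-space factor can always be stripped off an envelope without changing ${L.}$.

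First, by Lemma~\ref{211} and Lemma~\ref{214} choose $\mu_n$-equivariant closed immersions $\iota_i\colon Y_i\hookrightarrow P_i:=\mathbb{P}_D^{r_i}$ into projective spaces carrying a global $\mu_n$-action. By Corollary~\ref{216} the products $Y_1\times P_2$, $P_1\times Y_2$ and $P_1\times P_2$ are $\mu_n$-projective, and since each is a product of a regular scheme with a projective space (which is smooth over $D$), they are all regular; crucially one never forms $Y_1\times Y_2$, whose regularity would be problematic. Endow these products with the graph embeddings of $X$, for instance $(j_1,\iota_2 j_2)\colon X\hookrightarrow Y_1\times P_2$ and $(\iota_1 j_1,\iota_2 j_2)\colon X\hookrightarrow P_1\times P_2$. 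Then $\iota_1\times\mathrm{id}_{P_2}$ and $\mathrm{id}_{P_1}\times\iota_2$ are $\mu_n$-equivariant closed immersions compatible with these graph embeddings, so Proposition~\ref{411} yields ${L.}^{Y_1\times P_2}={L.}^{P_1\times P_2}={L.}^{P_1\times Y_2}$.

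The heart of the argument, and the step I expect to be the main obstacle, is the \emph{claim} that a projective-space factor is invisible to ${L.}$: if $Y$ is an envelope of $X$ via $j$ and $\iota\colon X\hookrightarrow P$ is a $\mu_n$-equivariant closed immersion into a projective space with global action, then the graph envelope $(j,\iota)\colon X\hookrightarrow Y\times P$ satisfies ${L.}^{Y\times P}={L.}^{Y}$. To prove this I would factor the graph as $X\xrightarrow{\gamma}X\times P\xrightarrow{j\times\mathrm{id}}Y\times P$, where $\gamma=(\mathrm{id}_X,\iota)$ is a $\mu_n$-equivariant closed immersion and a section of the projection $p\colon X\times P\to X$, so that $p\circ\gamma=\mathrm{id}_X$ and, by Property~\ref{233}, $p_{\mu_n}\circ\gamma_{\mu_n}=\mathrm{id}$ on fixed point schemes. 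Regarding $Y\times P$ as an envelope of $X\times P$ via $j\times\mathrm{id}$, Proposition~\ref{413} applied to $\gamma$ gives $\gamma_{\mu_n *}\big({L.}^{Y\times P}(\alpha)\big)={L.}^{Y\times P}(\gamma_*\alpha)$ for $\alpha\in K_0'(X,\mu_n)$, where the left occurrence of ${L.}^{Y\times P}$ is the one attached to the graph envelope of $X$. The base change invariance of Proposition~\ref{415}, whose standing hypothesis is guaranteed by Proposition~\ref{416}, applied to $\beta=\gamma_*\alpha$ gives ${L.}^{Y}(p_*\beta)=p_{\mu_n *}\big({L.}^{Y\times P}(\beta)\big)$. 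Substituting one identity into the other and using $p_*\gamma_*=\mathrm{id}$ together with $p_{\mu_n *}\gamma_{\mu_n *}=\mathrm{id}$ collapses both sides to the equality ${L.}^{Y}(\alpha)={L.}^{Y\times P}(\alpha)$.

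Applying this claim twice, once to strip $P_2$ from $Y_1\times P_2$ and once to strip $P_1$ from $P_1\times Y_2$, produces the chain ${L.}^{Y_1}={L.}^{Y_1\times P_2}={L.}^{P_1\times P_2}={L.}^{P_1\times Y_2}={L.}^{Y_2}$, which is exactly the asserted independence. I expect the genuinely delicate point to be purely bookkeeping in the key claim—verifying that the two instances of ${L.}^{Y\times P}$ refer respectively to the graph envelope of $X$ and the product envelope of $X\times P$, and that the fixed-point sections match correctly under Property~\ref{233}—rather than any new geometric input, since all the remaining content is a formal consequence of the functoriality packaged in Propositions~\ref{411}, \ref{413}, \ref{415} and \ref{416}.
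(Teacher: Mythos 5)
Your proposal is correct and is essentially the paper's own argument: the decisive step --- comparing ${L.}^{Y}$ with ${L.}^{Y\times P}$ by factoring the graph embedding through $X\xrightarrow{\gamma}X\times P\xrightarrow{p}X$ and combining Proposition~\ref{413}, Proposition~\ref{415} (with its hypothesis supplied by Proposition~\ref{416}) and $p_*\gamma_*=\mathrm{id}$ --- is exactly the paper's proof. The only cosmetic difference is that the paper first replaces each $Y_i$ by a projective space with global action (via Lemma~\ref{214} and Proposition~\ref{411}) so that the single product $Y_1\times Y_2$ is automatically regular, whereas you keep the $Y_i$ and instead route through $Y_1\times P_2$, $P_1\times P_2$ and $P_1\times Y_2$, stripping the projective factors at the end; both orderings rest on the same functoriality.
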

\begin{proof}
	Suppose that $Y_1$ and $Y_2$ are two $\mu_n$-projective regular envelopes of $X$, we have to show that $L.^{Y_1}=L.^{Y_2}$. According to Proposition~\ref{411} and Lemma~\ref{214}, we may assume that $Y_1$ and $Y_2$ are two projective spaces which are both endowed with global $\mu_n$-actions. Consider the diagonal embedding $i: X\rightarrow X\times Y_2$ given by $x\mapsto (x,x)$, the canonical projection $p: X\times Y_2\rightarrow X$, and the following diagram
	\begin{displaymath}\xymatrix{
		K_0'(X,\mu_n) \ar[rr]^-{{L.}^{Y_1\times Y_2}} \ar[d]_{i_*} &&
		K_0'(X_{\mu_n},\mu_n)_\mathcal{R} \ar[d]^{{i_{\mu_n}}_*}\\
		K_0'(X\times Y_2,\mu_n) \ar[rr]^-{{L.}^{Y_1\times Y_2}} \ar[d]_{p_*} && K_0'(X_{\mu_n}\times {Y_2}_{\mu_n},\mu_n)_\mathcal{R} \ar[d]^{{p_{\mu_n}}_*} \\
		K_0'(X,\mu_n) \ar[rr]^-{{L.}^{Y_1}} && K_0'(X_{\mu_n},\mu_n)_\mathcal{R}.}
	\end{displaymath}
	The upper square commutes by Proposition~\ref{413} and the lower square commutes by Proposition~\ref{415}, so the outside square is commutative. But by construction $p_*i_*$ is the identity map on $K_0'(X,\mu_n)$, therefore $L.^{Y_1\times Y_2}$ is equal to $L.^{Y_1}$. By a symmetric argument we finally have $L.^{Y_1}=L.^{Y_2}$, so we are done.
\end{proof}

\subsection{Lefschetz-Riemann-Roch theorem}
\begin{thm}\label{421}
	Let $X$ and $Y$ be two $\mu_n$-schemes which both admit a
	$\mu_n$-projective regular envelope, and let $f: X\rightarrow Y$
	be a $\mu_n$-equivariant morphism. Then the following diagram
	\begin{displaymath}\xymatrix{
		K_0'(X,\mu_n) \ar[r]^-{L.} \ar[d]_{f_*} & K_0'(X_{\mu_n},\mu_n)\otimes_{R(\mu_n)}\mathcal{R} \ar[d]^{{f_{\mu_n}}_*} \\
		K_0'(Y,\mu_n) \ar[r]^-{L.} & K_0'(Y_{\mu_n},\mu_n)\otimes_{R(\mu_n)}\mathcal{R}.}
	\end{displaymath}
	commutes. When $X$ is regular,
	$L.[\mathcal{O}_X]=\lambda_{-1}^{-1}(N_{X/{X_{\mu_n}}})\cap [\mathcal{O}_{X_{\mu_n}}]$.
\end{thm}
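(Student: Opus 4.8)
The commutativity of the main square is the payoff of all the structural results assembled so far, so the plan is to reduce it to the case of a closed immersion followed by a projection, and then invoke the pieces already in hand. First I would factor the $\mu_n$-morphism $f\colon X\to Y$ through its graph: since $Y$ admits a $\mu_n$-projective regular envelope, it carries a $\mu_n$-equivariant very ample sheaf by Lemma~\ref{214}, so there is a $\mu_n$-equivariant closed immersion $Y\hookrightarrow P$ into a projective space $P$ with global $\mu_n$-action. The morphism $f$ then decomposes as the $\mu_n$-equivariant closed immersion $\Gamma_f\colon X\hookrightarrow X\times P$ (the graph composed with $1\times(Y\hookrightarrow P)$) followed by the canonical projection $p\colon X\times P\to Y$ — or more precisely one routes through $X\times Y$ and uses the projection to $Y$. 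Because the diagram is functorial in $f$, it suffices to check commutativity separately for the closed immersion $\Gamma_f$ and for the projection $p$.

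For the closed-immersion factor, commutativity is precisely Proposition~\ref{413}, which handles $\mu_n$-equivariant closed immersions $X'\hookrightarrow X$ and whose proof already packages together the compatibilities of the homology map $h$, the modified homology map $\widetilde h$, and the pushforward with the relevant closed-immersion squares (Properties~\ref{229} and \ref{2310}). For the projection factor, I would invoke the base change invariance established in Proposition~\ref{415}, whose hypothesis — the commutativity for the structure morphism $q\colon P\to D$ of a projective space with global action — is verified unconditionally in Proposition~\ref{416}. Thus the projection $X\times P\to X$ satisfies the Lefschetz square, and composing with the closed-immersion case along the graph factorisation yields the full statement. The independence of the envelope, needed to make $L.$ well defined in the first place, is guaranteed by Theorem~\ref{417}.

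The second assertion, the explicit formula $L.[\mathcal{O}_X]=\lambda_{-1}^{-1}(N_{X/X_{\mu_n}})\cap[\mathcal{O}_{X_{\mu_n}}]$ when $X$ is regular, I would read off directly from the construction of $L.$ together with the definition of the modified homology map. When $X$ is itself regular it is its own $\mu_n$-projective regular envelope, so one may take $Y=X$; then $h^{-1}[\mathcal{O}_X]$ is represented by a $\mu_n$-equivariant locally free resolution of $\mathcal{O}_X$, the map $L$ is simply restriction to $X_{\mu_n}$ followed by base extension, and Definition~\ref{237} inserts exactly the factor $i_{\mu_n}^*\lambda_{-1}^{-1}(N_{X/X_{\mu_n}})$ via $\widetilde h$. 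Invertibility of $\lambda_{-1}(N_{X/X_{\mu_n}})$ in the localised group is Proposition~\ref{236}, which is what makes the inverse meaningful.

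The main obstacle I anticipate is not any single diagram chase but ensuring that the graph factorisation is genuinely $\mu_n$-equivariant and lands inside a scheme to which Proposition~\ref{415} applies: one must confirm that $X\times P$ is again a legitimate ambient object, that the fixed-point scheme decomposes as $X_{\mu_n}\times P_{\mu_n}$ (Property~\ref{233}), and that the projection $p$ and its restriction $p_{\mu_n}$ are compatibly proper so that the pushforwards in the two rows match. Getting these equivariance and properness bookkeeping details to align precisely with the hypotheses of Propositions~\ref{413} and \ref{415} is where the real care is required; once the factorisation is in place, the commutativity is formal.
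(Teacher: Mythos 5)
Your overall architecture is the paper's: factor $f$ as a closed immersion followed by a projection, apply Proposition~\ref{413} to the immersion and Propositions~\ref{415}--\ref{416} to the projection, glue with Theorem~\ref{417}, and obtain the formula for $L.[\mathcal{O}_X]$ by taking $X$ as its own envelope (that part of your argument is correct and matches the paper). But your factorisation itself is garbled, and this is a genuine gap: you put the projective space on the wrong side. You embed $Y$ into a projective space $P$ with global action and claim $f$ decomposes as $X\hookrightarrow X\times P$ followed by ``the canonical projection $X\times P\to Y$'' --- no such canonical projection exists (the canonical projections from $X\times P$ go to $X$ and to $P$, and neither composite recovers $f$). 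Your fallback, routing through $X\times Y\to Y$, does produce $f$, but the projection $X\times Y\to Y$ has fibre $X$, which is not a projective space with global $\mu_n$-action, so Proposition~\ref{415} does not apply to it.

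The correct factorisation, which is what the paper uses, goes the other way: since $X$ admits a $\mu_n$-projective regular envelope, $X$ itself is $\mu_n$-projective and hence (Lemmas~\ref{211} and \ref{214}) admits a $\mu_n$-equivariant closed immersion $i\colon X\hookrightarrow P$ into a projective space with \emph{global} $\mu_n$-action. Then $(f,i)\colon X\hookrightarrow Y\times P$ is a $\mu_n$-equivariant closed immersion with $pr_Y\circ(f,i)=f$; the ambient scheme $Y\times P$ admits the $\mu_n$-projective regular envelope $Z\times P$ (where $Z$ is an envelope of $Y$, using Corollary~\ref{216}), so Proposition~\ref{413} applies to $(f,i)$, and $pr_Y\colon Y\times P\to Y$ is exactly the projection covered by Proposition~\ref{415} (applied with its ``$X$'' equal to our $Y$). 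Once the factorisation is corrected in this way, the rest of your argument --- including the verification of the second assertion via Definition~\ref{237} and Proposition~\ref{236} --- goes through as you describe.
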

\begin{proof}
	Since $X$ is $\mu_n$-projective, it admits a regular envelope $P$ which is some projective space endowed with a global $\mu_n$-action. Then the morphism $f$ factors into a closed
	immersion $X\hookrightarrow Y\times P$ followed by a projection
	$Y\times P\rightarrow Y$. So the commutative diagram in this theorem comes from Proposition~\ref{413}, Proposition~\ref{415}
	and the independence of the construction of the morphism $L.$ i.e. Theorem~\ref{417}. The second statement is clear when one chooses $X$ itself as a $\mu_n$-projective regular envelope.
\end{proof}

\begin{cor}\label{422}
	Let notations and assumptions be as above, and let $j:X\hookrightarrow Z$ be any $\mu_n$-projective regular envelope of $X$. Then for any $\mu_n$-equivariant coherent sheaf $\mathcal{F}$ on $X$ the following fixed point formula of
	Lefschetz type holds in $K_0'(Y_{\mu_n},\mu_n)_\mathcal{R}$:
	$$L.(\sum_{k\geq0}(-1)^kR^kf_*(\mathcal{F}))={f_{\mu_n}}_*((\lambda_{-1}^{-1}(N_{Z/{Z_{\mu_n}}}))\cap\sum_{l\geq0}(-1)^l{\rm Tor}^l_{\mathcal{O}_Z}(j_*\mathcal{F},\mathcal{O}_{Z_{\mu_n}})).$$
\end{cor}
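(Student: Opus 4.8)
The plan is to reduce the asserted identity to the commutative square of Theorem~\ref{421}, after rewriting both sides in terms of the morphism $L.$ evaluated on the single class $[\mathcal{F}]$. First I would note that the left-hand argument is nothing but the push-forward of $[\mathcal{F}]$ in equivariant $K$-theory: by definition $\sum_{k\geq0}(-1)^kR^kf_*(\mathcal{F})=f_*[\mathcal{F}]$ in $K_0'(Y,\mu_n)$. Feeding $[\mathcal{F}]\in K_0'(X,\mu_n)$ into the commutative diagram of Theorem~\ref{421} therefore yields
$$L.\bigl(f_*[\mathcal{F}]\bigr)={f_{\mu_n}}_*\bigl(L.[\mathcal{F}]\bigr),$$
so the whole corollary reduces to the identity
$$L.[\mathcal{F}]=\lambda_{-1}^{-1}(N_{Z/{Z_{\mu_n}}})\cap\sum_{l\geq0}(-1)^l{\rm Tor}^l_{\mathcal{O}_Z}(j_*\mathcal{F},\mathcal{O}_{Z_{\mu_n}})$$
in $K_0'(X_{\mu_n},\mu_n)_\mathcal{R}$.

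To prove this I would compute $L.$ through the chosen envelope $Z$; this is permitted since by Theorem~\ref{417} the morphism $L.$ is independent of the envelope, and ${L.}^Z=\widetilde{h}\circ L\circ h^{-1}$. Using Proposition~\ref{212} I would pick a finite $\mu_n$-equivariant locally free resolution $\mathcal{E}.\to j_*\mathcal{F}$ on $Z$. Then $\mathcal{E}.$ is acyclic outside $X$, and since the homology map of Definition~\ref{227} sends $[\mathcal{E}.]$ to $\sum_i(-1)^i[H_i(\mathcal{E}.)]=[\mathcal{F}]$, we get $h^{-1}[\mathcal{F}]=[\mathcal{E}.]$. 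Next $L$ is restriction to the fixed locus followed by base extension, so $L[\mathcal{E}.]=[\mathcal{E}.|_{Z_{\mu_n}}]\otimes 1$. Finally $\widetilde{h}$ of Definition~\ref{237} contributes the factor $\lambda_{-1}^{-1}(N_{Z/{Z_{\mu_n}}})$ together with the homology of the restricted complex. The decisive observation is that, $\mathcal{E}.$ being a locally free resolution of $j_*\mathcal{F}$, its restriction computes Tor, $H_i(\mathcal{E}.|_{Z_{\mu_n}})=H_i(\mathcal{E}.\otimes_{\mathcal{O}_Z}\mathcal{O}_{Z_{\mu_n}})={\rm Tor}^i_{\mathcal{O}_Z}(j_*\mathcal{F},\mathcal{O}_{Z_{\mu_n}})$, which turns $\widetilde{h}(L(h^{-1}[\mathcal{F}]))$ into exactly the displayed right-hand side.

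I expect the main obstacle to be careful bookkeeping rather than a conceptual difficulty. One must check that the cap-product written in the corollary, where $\lambda_{-1}^{-1}(N_{Z/{Z_{\mu_n}}})$ is regarded as a class on $Z_{\mu_n}$ acting on the Tor-class supported on $X_{\mu_n}$, matches the cap-product in Definition~\ref{237}, where the same $\lambda_{-1}^{-1}$-factor is first pulled back by $i_{\mu_n}^*$ to $X_{\mu_n}$; these agree by the projection formula for the module structure of Remark~\ref{226}, precisely because the Tor sheaves are supported on $X_{\mu_n}$. The only substantive inputs are the invertibility of $\lambda_{-1}(N_{Z/{Z_{\mu_n}}})$ from Proposition~\ref{236}, which makes the right-hand side meaningful, and the standard identification of the restricted homology with Tor; the $\mathcal{R}$-linearity and $\mu_n$-equivariance of every step are routine.
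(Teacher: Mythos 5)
Your proposal is correct and is exactly the deduction the paper intends (the corollary is stated without an explicit proof, as an immediate consequence of Theorem~\ref{421}): you feed $[\mathcal{F}]$ into the commutative square, identify $f_*[\mathcal{F}]$ with $\sum_k(-1)^kR^kf_*\mathcal{F}$, and compute $L.[\mathcal{F}]=\widetilde{h}(L(h^{-1}[\mathcal{F}]))$ through the envelope $Z$ using a finite equivariant locally free resolution of $j_*\mathcal{F}$, whose restriction to $Z_{\mu_n}$ computes the Tor terms. The appeal to Theorem~\ref{417} for envelope-independence and the projection-formula remark reconciling $i_{\mu_n}^*\lambda_{-1}^{-1}(N_{Z/Z_{\mu_n}})$ with the class written on $Z_{\mu_n}$ are the right supporting points.
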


\begin{rem}\label{423}
	Comparing with \cite[Th\'{e}or\`{e}me 3.5]{Th}, the formula in Corollary~\ref{422} has at least two advantages if $Y$ admits a
	$\mu_n$-projective regular envelope:
	
	(i) We don't require that the morphism $f: X\rightarrow Y$ factors through the envelope $Z$;
	
	(ii) When $Y$ is not regular, our morphism $L.$ from $K'_0(Y,\mu_n)\rightarrow K'_0(Y_{\mu_n},\mu_n)_{\mathcal{R}}$ is constructible by choosing a projective regular envelope for $Y$.
\end{rem}

\hspace{5cm} \hrulefill\hspace{5.5cm}

Runqiao Fu

iFLYTEK Co., Ltd., No.666 West Wangjiang Road Hefei, Anhui, China\\

Shun Tang

Academy for Multidisciplinary Studies \& School of Mathematical Sciences, Capital Normal University, West 3rd Ring North Road 105,
100048 Beijing, P. R. China

E-mail: tangshun@cnu.edu.cn

\end{document}